\documentclass[11pt]{article}

\usepackage{amsmath, amssymb, amsthm, mathtools}
\usepackage[margin=1in]{geometry}
\usepackage{enumitem}
\usepackage{hyperref}
\usepackage{cleveref}
\usepackage{xcolor}
\hypersetup{
	colorlinks=true,
	linkcolor=blue!50!black, % Deep blue for internal links (sections, equations)
	citecolor=green!50!black, % Deep green for citations
	urlcolor=red!50!black % Deep red/maroon for web URLs
}
\usepackage{algorithm}
\usepackage{algorithmicx}
\usepackage{algpseudocode}

\newtheorem{theorem}{Theorem}[section]
\newtheorem{lemma}[theorem]{Lemma}
\newtheorem{corollary}[theorem]{Corollary}

\theoremstyle{remark}
\newtheorem{remark}[theorem]{Remark}

\newcommand{\Sph}{\mathbb{S}}
\newcommand{\R}{\mathbb{R}}
\newcommand{\Z}{\mathbb{Z}}

\newcommand{\eps}{\epsilon}
\newcommand{\cP}{\mathcal{P}}
\newcommand{\cA}{\mathcal{A}}
\newcommand{\cB}{\mathcal{B}}
\newcommand{\cD}{\mathcal{D}}
\newcommand{\cE}{\mathcal{E}}
\newcommand{\cF}{\mathcal{F}}

\newcommand{\cM}{\mathcal{M}}
\newcommand{\cN}{\mathcal{N}}
\newcommand{\cR}{\mathcal{R}}
\newcommand{\ip}[2]{\langle #1,#2\rangle}
\newcommand{\norm}[1]{\left\|#1\right\|}
\newcommand{\embeds}[1]{\overset{#1}{\hookrightarrow}}
\DeclareMathOperator{\supp}{supp}
\DeclareMathOperator{\poly}{poly}
\DeclareMathOperator{\polylog}{polylog}
\DeclareMathOperator{\range}{range}
\DeclareMathOperator{\diam}{diam}
\DeclareMathOperator*{\E}{\mathbb{E}}
\DeclareMathOperator*{\Var}{Var}
\DeclareMathOperator{\sgn}{sgn}

\title{Optimal Embeddings of Constant-Dimensional Subspaces of $L^p$ into $\ell_p^N$}
\author{Yi Li \\ Nanyang Technological University \\ \texttt{yili@ntu.edu.sg}}
\date{}
\begin{document}
\maketitle

\begin{abstract}
	For $d \geq 2$, $p \geq 1$ and $\epsilon > 0$, let $N_p(d,\epsilon)$ be the smallest integer $N$ such that every $d$-dimensional subspace of $L^p[0,1]$ admits a linear embedding
	into $\ell_p^N$ with distortion at most $1 + \epsilon$. For fixed $d\geq 2$
	and $p\geq 1$, the bound
	\[
		N_p(d,\epsilon) \lesssim_{d,p} \epsilon^{-2(d-1)/(d+2p)}
	\]
	is established. For $p \notin 2\mathbb{Z}$, this matches the known lower bound up to constant factors. For odd integers $p$, previous upper bounds with this exponent incurred additional logarithmic factors, except in the logarithm-free case $p = 1$; for non-integral $p$, no upper bound with this exponent was previously known. For even integers $p$, isometric embeddings of dimension independent of $\epsilon$ are known.
	
	For $p \notin 2\mathbb{Z}$, the proof approximates $|t|^p$ by a polynomial with a remainder of small total variation. The polynomial part contributes no error, while the error from the remainder is controlled by an integrated equatorial-band discrepancy estimate.
\end{abstract}

%!TeX root = main.tex
\section{Introduction}

Embedding finite-dimensional subspaces of $L^p$ into $\ell_p^N$ is a classical problem in the local theory of Banach spaces. Let $E \subseteq L^p[0,1]$ be a subspace. Given
$\eps > 0$, we say that $E$ admits a $(1+\eps)$-embedding into $\ell_p^N$ and
write $E\embeds{1+\eps}\ell_p^N$ if there exists a linear map $T: E \to \ell_p^N$ such that $\norm{x}_p \leq \norm{Tx}_p \leq (1+\eps)\norm{x}_p$ for all $x \in E$.
Define
\[
N_p(E,\eps) = \min\{ N: E\embeds{1+\eps}\ell_p^N \}
\]
and, given a dimension parameter $d$,
\[
N_p(d,\eps) = \sup \{N_p(E,\eps): E \text{ is a $d$-dimensional subspace
	of }L^p[0,1]\}.
\]
The embedding problem asks for the asymptotic behaviour of
$N_p(d,\eps)$ as a function of $d$ and $\eps$.

A comprehensive survey appears in~\cite{handbook:19}. Classical results give
\begin{equation}
	\label{eqn:N_p(d,eps)}N_p(d,\eps) \lesssim \eps^{-2}d^{\max\{1, p/2\}}\log^c
	(d/\eps),
\end{equation}
where $c > 0$ and the implicit constant are absolute; see, e.g., the monograph by Ledoux and Talagrand~\cite{LT91}.
Schechtman~\cite{schechtman:tight} proved for even integers $p$ that
$N_p(d,\eps) \lesssim p^{-2}\eps^{-2}\binom{d+p/2-1}{p/2}$ for $\eps\leq 1/p$, without additional logarithmic
factors. Very recently, Reis and Rothvoss~\cite{RR26} removed the logarithmic
factor for $p = 1$ altogether, obtaining $N_1(d,\eps) \lesssim d/\eps^2$.

It is folklore that the $d^{\max\{1, p/2\}}$ factor in \eqref{eqn:N_p(d,eps)}
is necessary, which can be seen, for instance, from the tightness of Dvoretzky's
theorem for finite-dimensional $\ell_p$ spaces. Regarding the dependence on
$\eps$, we remark that, when $p$ is even, it is possible to attain an isometric
embedding into $\ell_p^N$ with $N = \binom{d+p-1}{p} - 1$~\cite{handbook:21},
so there can be no dependence on $\eps$ for even $p$. For $p \notin 2\Z$, it
is known that $N_p(d,\eps) \gtrsim 1/(\eps^2 \polylog(1/\eps))$ when
$d \gtrsim \log(1/\eps)$~\cite{LWW21}, which indicates that the $1/\eps^2$ dependence in \eqref{eqn:N_p(d,eps)}
is tight up to logarithmic factors in this regime of $d$.

The situation changes dramatically when the dimension $d$ is a fixed constant.
In this regime the dependence on $\eps$ can be substantially better than $\eps^{-2}$
for $p = 1$. The optimal order is now known:
\[
	N_1(d,\eps) \asymp_d
	\left(\frac{1}{\eps}\right)^{\frac{2(d-1)}{d+2}}.
\]
The lower bound follows from the corresponding bound for embedding the Euclidean subspace
$\ell_2^d$, proved by Bourgain et al.~\cite{BLM89}.
The matching upper bound for $d = 2$ is classical. 
Matou\v{s}ek~\cite{mat96} proved the matching upper bound for $d\geq 5$
and obtained it up to logarithmic factors for $d = 3, 4$.
Siegel~\cite{Siegel25} recently removed these remaining logarithmic
factors.

For $p \notin 2\mathbb{Z}$, the proof technique in \cite{BLM89} can be
generalized to $p > 1$, yielding, as shown in \cite{LLW23},
\begin{equation}
	\label{eqn:LB}N_p(\ell_2^d, \eps) \gtrsim_{d,p} \left(\frac{1}{\eps}\right
	)^{\frac{2(d-1)}{d+2p}}.
\end{equation}
A near-matching upper bound for $N_p(d,\eps)$ was shown in the same paper \cite{LLW23}
for integers $p$,
\begin{equation}
	\label{eqn:N_p(d,eps)_const_d}N_p(d, \eps) \lesssim_{d,p}\left(\frac{1}{\eps}
	\right)^{\frac{2(d-1)}{d+2p}}\left(\log\frac{1}{\eps}\right)^{A_{d,p}},
\end{equation}
where $A_{d,p} > 0$ is a constant depending only on $d$ and $p$.

In this paper, we extend the constant-dimensional upper bound \eqref{eqn:N_p(d,eps)_const_d}
from integer $p$ to all $p \geq 1$. For every fixed $d$ and every $p \notin 2\mathbb{Z}$,
this settles the asymptotic dependence on $\eps$ up to constant factors. For
positive even integers $p$, as mentioned above, isometric embeddings exist without dependence on $\eps$.

\subsection{Main Results}

\begin{theorem}[Subspace embedding]	\label{thm:embedding}
	Let $d \ge 2$ and $p \ge 1$ be fixed. There exists
	a constant $C_{d,p} > 0$ such that, for every $\eps \in (0,1/2)$,
	\[
		N_p(d,\eps) \le C_{d,p}
		\eps^{-\frac{2(d-1)}{d+2p}}.
	\]
\end{theorem}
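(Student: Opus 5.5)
The argument reduces the problem to approximating a measure on the sphere by a discrete measure with few atoms, in a way that makes the error smooth in the test direction. We treat only $p\notin 2\Z$, since even $p$ is covered by the isometric result.

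\textbf{Reduction to the sphere.} A $d$-dimensional subspace $E\subseteq L^p[0,1]$ is represented by a Borel probability measure $\mu$ on $\Sph^{d-1}$ with
\[
\norm{x}_p^p = c\int_{\Sph^{d-1}} |\ip{\theta}{u}|^p\,d\mu(u),
\]
where $x\in E$ is identified with $\theta\in\R^d$. To see this, write $x=\sum\theta_i f_i$ for a basis $(f_i)$, map $s\mapsto f(s)/|f(s)|$, and push forward $|f(s)|^p\,ds$. After a Lewis change of density we may assume $\mu$ is isotropic, so that $\norm{\cdot}$ is comparable to the Euclidean norm with constants depending only on $d,p$.

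An embedding into $\ell_p^N$ with distortion $1+\eps$ then follows from a discrete measure $\nu=\sum_{j\le N}w_j\delta_{u_j}$ satisfying
\[
\sup_{\theta\in\Sph^{d-1}}\left|\int |\ip{\theta}{u}|^p\,d(\mu-\nu)(u)\right| \lesssim \eps .
\]
The map is $T x=(w_j^{1/p}\ip{\theta}{u_j})_j$. So the goal is to construct $\nu$ with $N\lesssim \eps^{-2(d-1)/(d+2p)}$.

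\textbf{Polynomial plus remainder.} Fix a scale $\delta$. Write
\[
|t|^p = P_\delta(t)+R_\delta(t)\quad\text{on } [-1,1],
\]
where $P_\delta$ is a polynomial of degree $D=D(d,p)$ and $R_\delta$ is supported essentially in $|t|\lesssim\delta$. Outside that band, $R_\delta$ and its derivatives up to a fixed order have size $\lesssim \delta^{p-k}$ times a rapidly decaying tail. Concretely, $R_\delta$ is a smoothed piece of $|t|^p$ near $0$ with total variation $O(\delta^p)$.

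The polynomial part is handled exactly. Partition the sphere into $N$ cells of equal $\mu$-mass and diameter $\asymp N^{-1/(d-1)}$. Within each cell, replace $\mu$ by a few atoms that match all moments up to degree $D$ (Tchakaloff/Carathéodory). The $P_\delta$ term then contributes no error, and only $O_{d,p}(1)$ atoms per cell are needed.

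\textbf{Controlling the remainder (main obstacle).} For fixed $\theta$, the remainder error is a sum over cells. Cells far from the equatorial band $\{|\ip{\theta}{u}|\lesssim\delta\}$ contribute $O(\text{mass}\cdot r^{D+1}\cdot \|R^{(D+1)}\|)$, where $r=N^{-1/(d-1)}$, because moment matching kills the Taylor polynomial on each cell. Cells meeting the band contribute mass times the local oscillation $\lesssim \delta^p$ (more precisely $\delta^{p-1}r$).

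A deterministic construction only gives a band-mass bound, which leads to logarithmic losses. Instead, I would randomize the atom positions within each cell, independently across cells, while preserving the moment constraints; for example, take a random convex combination from a moment-matching family. The error for fixed $\theta$ is then a sum of independent mean-zero terms. Its variance is dominated by the $\approx N\delta/r$ band cells, each contributing $(N^{-1}\delta^{p-1}r)^2$, and the deterministic tails are summable. This makes it an \emph{integrated} band-discrepancy estimate: average the squared error over $\theta$ (and over $\delta$ dyadically) rather than taking a union bound.

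Choosing $\delta\asymp r$ and balancing the resulting error $\approx (N^{-1} r^{2p+1}\cdot r^{-1}\dots)^{1/2}$ gives the exponent $\eps\asymp N^{-(d+2p)/(2(d-1))}$, which is exactly the lower bound exponent. To pass from the averaged bound to a uniform bound over $\theta$ without a log factor, I would use a chaining (Dudley-type) argument over $\theta$ on $\Sph^{d-1}$. At a finer net scale, the increments are Lipschitz-controlled because $|t|^p$ is $C^1$ for $p\ge1$. The geometric decay in the multiscale terms is what should avoid the log. This chaining-without-log step is where I expect the real difficulty; the bounds must be checked so the sum over scales converges geometrically. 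The final step is to rescale $N$ by constants.
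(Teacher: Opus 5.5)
Your reduction to additive-error sparsification on the sphere is the same as the paper's, but the construction of $\nu$ has genuine gaps. A small one first: a global split with $\deg P_\delta=D(d,p)$ fixed and $\norm{R_\delta}_{\mathrm{TV}}=O(\delta^p)$ is impossible. It would make $|t|^p$ uniformly $O(\delta^p)$-close on $[-1,1]$ to polynomials of degree $D$, which forces $p\in2\Z$. The paper instead needs degree $K\asymp N^{1/(d-1)}$ and exactness on all of $\cP_K(\Sph^{d-1})$.

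More seriously, your variance count only works for nearly uniform $\mu$. A Lewis change of density makes $\mu$ isotropic, not spread out. For $d=3$, take $\tfrac23$ of the uniform measure on a great circle plus mass $\tfrac16$ at each pole: this is isotropic, yet for every $\theta$ within angle $r$ of the poles the whole circle lies in $\{u:|\ip{\theta}{u}|\le r\}$. Equal-mass cells of diameter $\asymp N^{-1/(d-1)}$ need not exist (atoms), and the input that produces the exponent $d+2p$ also fails. That input is that a band of width $r$ meets only $\asymp Nr$ of the $N$ cells, i.e.\ has $\mu$-mass $\lesssim r$. The straightforward repair (equal-area cells, with mass split into pieces of size $1/N$) allows $\asymp N$ pieces inside one band. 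It gives error $N^{-1/2-p/(d-1)}$, i.e.\ only the exponent $2(d-1)/(d+2p-1)$ of Appendix~\ref{sec:general_bourgain}.

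The paper never uses cell geometry. It writes the remainder as $\beta(|t|)$ with $\norm{d\beta}_{\mathrm{TV}}\lesssim K^{-p}$ and integrates by parts, so the error becomes $-\int_0^1\lambda(\cB(u,t))\,d\beta(t)$. This depends only on band memberships. These are controlled for an arbitrary point set by a low-crossing matching: every band crosses $O(m^{1-1/(d-1)})$ of the $\lfloor m/2\rfloor$ pairs formed from $m$ active points.

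The second gap is the step you flag yourself, and it fails as proposed. Suppose atoms are randomized independently across cells. The errors $X_\theta$ for directions at mutual angle $\gg r$ are then nearly uncorrelated, because their bands share only a small fraction of the relevant cells. So even for uniform $\mu$, $\sup_\theta|X_\theta|$ behaves like the maximum of $N^{c}$ nearly independent, nearly Gaussian variables of standard deviation $\sigma$. It is therefore typically $\gtrsim\sigma\sqrt{\log N}$ (Sudakov-type minoration). Chaining only bounds $\E\sup_\theta|X_\theta|$ and its tails, so no multiscale bookkeeping can certify an outcome below this level. This is exactly the $\sqrt{\log(2M)}$ loss described in the introduction.

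The paper removes it with a discrepancy step rather than a probabilistic one. Chaining (Lemma~\ref{lem:chaining}, fed by the packing bound of Corollary~\ref{cor:packing}) is used only to build a symmetric convex body $K\subseteq F$. Its Gaussian measure is exponentially small, $\gamma_F(K)\ge e^{-c_0k}$, and on it every integrated band functional $\ip{P_FDA_u}{h}$ is $O(\tau\norm{\xi}_{\mathrm{TV}}\,k^{1/2-1/(2n)})$, where $k$ is the number of pairs and $n=d-1$. Rothvoss's partial colouring theorem then finds a point of $(z^0+K)\cap[-1,1]^k$ that saturates a constant fraction of coordinates while preserving exactness. Iterating over geometrically shrinking unsaturated sets sums to $N^{-1/2-1/(2n)}\norm{\xi}_{\mathrm{TV}}$ with no logarithm.
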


By the standard reduction, it suffices to prove a sparsification result with additive error
for finitely supported probability measures on the sphere; see, e.g.,~\cite{LLW23}.

\begin{theorem}[Sparsification with additive error]
	\label{thm:additive} Let $d \ge 2$ and $p \geq 1$ be fixed. Let
	\begin{equation}
		\label{eqn:mu}\mu = \sum_{i = 1}^M \rho_i\delta_{z_i}, \qquad \rho_i > 0,
		\quad \sum_{i = 1}^M\rho_i = 1,\quad z_i \in \Sph^{d-1},
	\end{equation}
	be a finitely supported probability measure on $\Sph^{d-1}$. Then for every $\eps
		\in (0,1/2)$, there exists a positive atomic probability measure
	\[
		\nu = \sum_{i = 1}^N w_i\delta_{y_i}, \qquad y_i \in \supp(\mu),\quad w_i
		> 0,
	\]
	with
	\[
		N \le C_{d,p}
		\eps^{-\frac{2(d-1)}{d+2p}},
	\]
	such that
	\begin{equation}
		\label{eqn:additive}\sup_{u \in \Sph^{d-1}}\left| \int_{\Sph^{d-1}}|\ip{u}{y}
		|^p\,d\nu(y) - \int_{\Sph^{d-1}}|\ip{u}{y}|^p\,d\mu(y) \right| \le \eps
		.
	\end{equation}
	Here, $C_{d,p}>0$ depends only on $d$ and $p$.
\end{theorem}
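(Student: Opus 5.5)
Following the outline in the abstract, I will split the kernel $|t|^p$ on $[-1,1]$, at a scale $h\in(0,1)$ tied to $\eps$, as
\[
|t|^p = P_h(t) + R_h(t),
\]
where $P_h$ is a polynomial of degree $n \asymp h^{-1}$. The remainder should be supported essentially in a band $|t|\lesssim h$, with total variation $\int|dR_h| \lesssim h^{p}$. One concrete choice: smooth $|t|^p$ at scale $h$ (convolve with a bump, or replace it by $h^p\phi(t/h)$ near $0$), then take a Jackson-type polynomial approximation of degree $\asymp h^{-1}$ to the smoothed function. The resulting error is $O(h^{p})$ uniformly with rapidly decaying derivatives, so it has small variation. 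For $p\notin 2\Z$ the singularity at $0$ is exactly what forces the remainder. For even $p$ we simply take $R_h=0$ and degree $p$.

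\textbf{Step 1 (polynomial part is free).} I will choose $\nu$ so that it reproduces all moments of $\mu$ of degree $\le n$: $\int q\,d\nu=\int q\,d\mu$ for every polynomial $q$ on $\R^d$ of degree $\le n$. Since $u\mapsto P_h(\ip{u}{y})$ is such a polynomial in $y$, the polynomial part then contributes zero error. By Carathéodory this alone needs about $n^{d}$ atoms from $\supp(\mu)$. I therefore combine it with the discrepancy step below, which must also handle the remainder.

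\textbf{Step 2 (remainder via band discrepancy).} Write $R_h(t)=\int \mathbf{1}[|t|\le s]\,d\sigma(s)$ (plus a constant), using the evenness of $R_h$ and $|\sigma|\lesssim h^p$ concentrated on $s\lesssim h$ with tails. The error from $R_h$ is then bounded by
\[
\int |\mu(B_{u,s})-\nu(B_{u,s})|\,d|\sigma|(s),
\]
where $B_{u,s}=\{y:|\ip{u}{y}|\le s\}$ is an equatorial band. This is an integrated equatorial-band discrepancy. To construct $\nu$ I will use iterated halving (Matoušek/Siegel style). Partition $\supp(\mu)$ into $m$ cells of diameter $\asymp m^{-1/(d-1)}$ and equal mass, pair up points within each cell, and use a coloring from partial coloring / Bansal–Spencer-type discrepancy. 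The coloring must exactly preserve the degree-$\le n$ moments; this is a linear constraint of codimension $\asymp n^{d}$ and can be imposed within partial coloring when $n^{d}\ll m$. Only cells cut by the band boundary $|\ip{u}{y}|=s$ contribute, about $m^{(d-2)/(d-1)}$ of them, each with variation $m^{-1}$ times the local oscillation. Integrating against $d|\sigma|$ gains smoothing from the $s$-averaging. Summing the geometric series over halving rounds, the error should come to about $h^{p}\cdot m^{-1/2-\,(d+?)/(2(d-1))}h^{-1/2}$-type terms.

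\textbf{Step 3 (balancing).} I will choose $h$ and the final $N=m$ to balance the remainder discrepancy against $\eps$. The constraint $n^d\lesssim N$ gives $h\gtrsim N^{-1/d}$, and optimizing should give $N\asymp \eps^{-2(d-1)/(d+2p)}$, which matches \eqref{eqn:LB} as a sanity check.

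The main obstacle is Step 2: obtaining a log-free band discrepancy bound that exploits the $s$-integration, while simultaneously keeping the exact moment constraints in every halving round. Log-free here means no $\sqrt{\log}$ loss from the union bound over $u$ and over the rounds. My first attempt would use chaining over $u$ at the scale of each round, in the spirit of Siegel's argument, so that the per-round losses decay geometrically.
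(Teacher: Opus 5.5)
Your architecture is the paper's: a degree-$K$ polynomial part is killed by exact moment matching, the small-variation remainder is written as a Stieltjes integral of band indicators, and iterated halving by partial colouring runs under the exactness constraints. Two ingredients are wrong as stated, though.

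\textbf{Dimension count.} The constraints only see polynomials restricted to points of $\Sph^{d-1}$. So the codimension is $\dim\cP_K(\Sph^{d-1})=\binom{K+d-1}{d-1}+\binom{K+d-2}{d-1}\asymp K^{d-1}$, not $K^{d}$ (here $K$ is your degree $n$). This is not cosmetic. With your constraint $h\gtrsim N^{-1/d}$, the remainder has variation $\gtrsim N^{-p/d}$. Balancing that against the band rate $N^{-1/2-1/(2(d-1))}$ gives $N\asymp\eps^{-2d(d-1)/(d^2+2p(d-1))}$, which is strictly worse than claimed. You need $K\asymp N^{1/(d-1)}$, so that $K^{-p}N^{-1/2-1/(2(d-1))}=N^{-(d+2p)/(2(d-1))}$.

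\textbf{Pairing.} Equal-mass cells of diameter $m^{-1/(d-1)}$ do not exist for an arbitrary finitely supported $\mu$. The paper instead pairs the unsaturated atoms by a low-crossing matching, in which every band crosses $O(r^{1-1/(d-1)})$ pairs. It then moves mass fractionally along pairs inside a balanced representation, which handles unequal weights, total mass and exactness together. Separately, a uniform $O(h^p)$ approximation error does not bound variation. You need $\int_{-1}^1\bigl|\frac{d}{dt}(|t|^p-P(t))\bigr|\,dt\lesssim K^{-p}$, which the paper gets from an $L^1$ Jackson estimate for $\sgn(t)|t|^{p-1}$ in the variable $\cos\theta$.

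\textbf{The main gap: Step 2 is left open.} Your exponent still contains a question mark and a stray $h^{-1/2}$. The missing mechanism runs as follows.
\begin{enumerate}[itemsep=0pt]
\item Keep the \emph{signed} integral instead of $\int|\mu(B_{u,s})-\nu(B_{u,s})|\,d|\sigma|(s)$; the absolute value destroys the linear structure that chaining needs. In one round with transfers $\theta_e$ along pairs $e=\{a,b\}$, the increment is then the linear functional $\sum_e\theta_eA_u(e)$, where $A_u(e)=\int_0^1(\mathbf{1}_{\cB(u,t)}(y_a)-\mathbf{1}_{\cB(u,t)}(y_b))\,d\xi(t)$.
\item The paper chains over $\{P_FDA_u:u\in\Sph^{d-1}\}$. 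Here $D$ normalizes the box of admissible transfers (entries $\le\tau\asymp B/m$) and $F$ is the exactness subspace. This set lies in a ball of radius $\tau\sqrt{\Lambda}\,\norm{\xi}_{\mathrm{TV}}$, where $\Lambda$ is the crossing number.
\item Its covering numbers are $\lesssim(L/\eps)^{2(d-1)}$ with $L\asymp\tau\sqrt{k}\,\norm{\xi}_{\mathrm{TV}}$. This bound comes from a packing lemma for the shatter-bounded family $(e,t)\mapsto a_u(e,t)$ on the probability space $\cE\times[0,1]$ weighted by $|\xi|/\norm{\xi}_{\mathrm{TV}}$. It does not come from Lipschitz continuity in $u$: that scale is not proportional to $\norm{\xi}_{\mathrm{TV}}\asymp K^{-p}$ and would cost a logarithm.
\item \v{S}id\'ak's lemma along the chain yields a symmetric convex $\mathcal{K}\subseteq F$ with $\gamma_F(\mathcal{K})\ge e^{-c_0k}$. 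For every $h\in\mathcal{K}$ it gives $\sup_{u}|\ip{P_FDA_u}{h}|\lesssim\tau\norm{\xi}_{\mathrm{TV}}\max\{\sqrt{\Lambda},\sqrt{k}\,k^{-1/(2(d-1))}\}$.
\item Rothvoss's subspace partial colouring theorem applied to $\mathcal{K}$ then saturates a constant fraction of the coordinates while preserving exactness. Each round costs $\lesssim\frac{B}{m}r^{1/2-1/(2(d-1))}\norm{\xi}_{\mathrm{TV}}$, and these costs sum to $C_dN^{-1/2-1/(2(d-1))}\norm{\xi}_{\mathrm{TV}}$, which is Theorem~\ref{input:finite-band-sampling}.
\end{enumerate}
Without this construction your outline gives at best the bound with a $\sqrt{\log}$ loss.
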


\begin{remark}
	If the input measure $\mu$ in Theorem~\ref{thm:additive} is given explicitly by the list ${(\rho_i,z_i)}_{i = 1}^M$, then for constant $d$ and $p$, the sparsified measure $\nu$ can be found, in the standard real-arithmetic model, by a randomized algorithm running in $\poly(M,1/\eps)$ time, with success probability at least $2/3$. See the details in Section~\ref{sec:algorithm}.
\end{remark}

Given Theorem~\ref{thm:additive}, we can prove Theorem~\ref{thm:embedding} as
follows.

\begin{proof}[Proof of Theorem~\ref{thm:embedding}]
	Since every $d$-dimensional subspace of $L^p[0,1]$ admits an arbitrarily close
	embedding into $\ell_p^M$ for a sufficiently large $M$, it suffices to
	consider a subspace parametrized by $A:\mathbb{R}^d \to \ell_p^M$. After
	reparametrizing $A$, John's theorem gives
	\[
		1 \le \|Au\|_p^p \le d^{p/2},\qquad \forall u \in \Sph^{d-1}.
	\]
	Write the nonzero rows of $A$ as $a_i = r_i z_i$, where $z_i \in \Sph^{d-1}$,
	and set
	\[
		W = \sum_i r_i^p, \qquad \mu = \frac{1}{W}\sum_i r_i^p\delta_{z_i}.
	\]
	Averaging $\|Au\|_p^p$ over $u \in \Sph^{d-1}$ shows that $W \asymp_{d,p}1$.

	Apply Theorem~\ref{thm:additive} to $\mu$ with accuracy $\delta = c_{d,p}\eps$,
	obtaining $\nu = \sum_{j = 1}^N w_j\delta_{y_j}$. Define
	\[
		Tu = \bigl((Ww_j)^{1/p}\langle u,y_j\rangle\bigr)_{j = 1}^N.
	\]
	Then, by choosing $c_{d,p}$ to be small enough,
	\[
		\sup_{u \in \Sph^{d-1}}\left|\|Tu\|_p^p - \|Au\|_p^p\right| \le W\delta
		\leq \eps \leq \eps \|Au\|_p^p.
	\]
	Define $T':\range(A) \to \ell_p^N$ by $T'(Au) = Tu$. This is well-defined since $A$ is injective. Then
	\[
		\left|\|T'v\|_p^p - \|v\|_p^p\right| \leq \eps \|v\|_p^p, \qquad  v \in \range(A).
	\]
	Rescaling
	$T'$ gives a $(1 + C_p\eps)$-embedding of $\range(A)$ into
	$\ell_p^N$. The bound on $N$ follows from Theorem~\ref{thm:additive}. Finally, applying the preceding argument with accuracy parameter $\eps/C_p$ proves the theorem.
\end{proof}

\subsection{Related work}
The classical upper bounds of $N_p(d,\eps)$ in~\eqref{eqn:N_p(d,eps)} are
established by iterative subsampling with the ``change of density'' technique~\cite{handbook:19}.
Suppose that one begins with a $d$-dimensional subspace $E \subseteq \ell_p^N$.
After placing $E$ in a suitable position by a change of density, one randomly subsamples
and appropriately rescales its coordinates. This produces a $d$-dimensional
subspace $F \subseteq \ell_p^{N'}$ with $N' \leq (3/4)N$ such that
$d(E,F) \approx 1 + \sqrt{d^{\max\{1,p/2\}}/N}$, where $d(\cdot,\cdot)$ denotes the
Banach--Mazur distance. The change of density ensures that the coordinates are
``spread out'', which is the regularity condition needed for the subsampling
argument. Thus, the ambient dimension is reduced by a constant factor in each
step. This reduction terminates when the distortion $1 + \sqrt{d^{\max\{1,p/2\}}/N}
		\approx 1 + \eps$, which corresponds to a target dimension of
$N \approx d^{\max\{1,p/2\}}/\eps^2$.

The change-of-density argument relies on Lewis' lemma~\cite{lewis}. Its
original proof is based on a determinant-maximization argument and does not
yield an efficient algorithm. A major algorithmic advance was made by Cohen and
Peng~\cite{CP15}, who characterized the Lewis weights as the fixed point of a
nonlinear map and gave an efficient iterative algorithm for approximating them
when $p < 4$. They also showed that, for $p \in [1,2]$, a single round of sampling
according to the Lewis weights yields an $\ell_p$-subspace embedding,
avoiding the iterative subsampling procedure. For $p > 2$, efficient high-precision
computation of Lewis weights was subsequently obtained by Fazel et al.~\cite{FLPS22}.
Earlier single-round sampling results for $p > 2$, due to Bourgain et al.~\cite{BLM89},
gave a target dimension of $\widetilde{O}(d^{p/2}/\eps^5)$, with a
substantially worse dependence on $\eps$ than \eqref{eqn:N_p(d,eps)}. More
recently, this was improved to the near-optimal target dimension $\widetilde O(
d^{p/2}/\eps^2)$ through single-round sampling constructions~\cite{ICLR24,Yasuda2024,Sun2024}.

When $d$ is a constant and $p = 1$, Bourgain and Lindenstrauss~\cite{BL88}
obtained the near-optimal bound
$N_1(\ell_2^d, \eps) = \widetilde{O}_d(\eps^{-\frac{2(d-1)}{d+2}})$
for embedding $\ell_2^d$ and the sub-optimal bound
$N_1(d,\eps) = \widetilde{O}_d(\eps^{-\frac{2(d-1)}{d+1}})$ for general
$d$-dimensional subspaces. Their arguments use concentration inequalities together
with suitable partitions of the sphere. These arguments can be extended to
general $p \geq 1$ by approximating $|t|^p$, on each cell away from the relevant
equator, by a polynomial of degree $O_{d,p}(\log(1/\eps))$. This yields
\[
	N_p(\ell_2^d, \eps) = \widetilde{O}_{d,p}(\eps^{-\frac{2(d-1)}{d+2p}}),
	\qquad N_p(d, \eps) = \widetilde{O}_{d,p}(\eps^{-\frac{2(d-1)}{d+2p-1}}).
\]
For $p \notin 2\mathbb{Z}$, the first bound is optimal up to logarithmic
factors (recalling \eqref{eqn:LB}), whereas the second is not, as its exponent
has denominator $d + 2p - 1$ instead of the optimal $d + 2p$. To the best of our knowledge,
these extensions have not been recorded in the literature. For completeness, we give the proofs in Appendices~\ref{sec:ball} and \ref{sec:general_bourgain}, respectively.

\subsection{Proof strategy}

Our proof is inspired by the discrepancy approach introduced by Matou\v{s}ek for
the case $p = 1$~\cite{mat96}, but the main mechanism is different. It combines
two ingredients: an approximation of $|t|^p$ by a polynomial with a remainder
of small total variation, and a partial-colouring argument which redistributes the mass among atoms to reduce the support size.
The polynomial part contributes no error, while the remaining error is controlled through equatorial band discrepancy.

Given a finitely supported uniform measure $\mu$ on $\Sph^{d-1}$, Matou\v{s}ek
pairs many of the points in $\supp(\mu)$ using a low-crossing matching
$e_j = \{a_j, b_j\}$ ($j \in [k]$).
The pairing is chosen to have a low crossing number, namely, every equator crosses only a few pairs.
He then chooses one point from each pair, reducing
the support size of the measure. The choice from each pair is encoded by a
colouring $\chi \in \{-1,1\}^k$. If $\chi_j = 1$, then $a_j$ is kept, otherwise
$b_j$ is kept. The problem is then reduced to choosing $\chi$ so that the
signed sums
\[
	\sum_{j = 1}^k \chi_j \left( |\langle u,a_j\rangle|-|\langle u,b_j\rangle
	| \right)
\]
are small uniformly over $u \in \Sph^{d-1}$. This is achieved through a multiscale
discrepancy construction for the absolute-value functions $y\mapsto |\langle u,
y\rangle|$.

A direct extension of this approach to $|\langle u,y\rangle|^p$ would control
the pair differences by the Lipschitz estimate
\[
	\left| |\langle u,a\rangle|^p - |\langle u,b\rangle|^p \right| \le p\|a - b
	\|_2,
\]
which gives essentially the same type of bound as in the $p = 1$ case and does not
yield the desired $d + 2p$ exponent. Instead, we approximate $|t|^p$ by
a polynomial with a remainder of small total variation. The polynomial part
is matched exactly after sparsification, while the approximation error from the remainder is
controlled through equatorial band discrepancy.

We write
\[
	|t|^p = q_K(t) + b_K(t),
\]
where $q_K$ is a polynomial of degree at most $K$ and $b_K$ is an even function with small total variation
\[
	\norm{b_K}_{\mathrm{TV}[-1,1]} \lesssim_p K^{-p}.
\]

We also consider fractional colourings. Let $\mu$ be a measure as in \eqref{eqn:mu}. By splitting heavy atoms if necessary, we assume that $\rho_i\lesssim 1/M$. Our goal is to find $x \in [-1,1]^M$ such that many coordinates of $x$ are $1$ or $-1$ and
\begin{equation}\label{eqn:exactness_intro}
	\sum_{i = 1}^M \rho_i x_i \phi(z_i) = 0 \qquad\text{for every }\phi \in
	\cP_K(\Sph^{d-1}),
\end{equation}
where $\cP_K(\Sph^{d-1})$ denotes the space of spherical polynomials of degree
at most $K$. 
Define
\[
	\nu_{+} = \sum_{i = 1}^M \rho_i(1+x_i)\delta_{z_i},\quad \nu_{-} = \sum_{i = 1}
	^M \rho_i(1-x_i)\delta_{z_i}.
\]
Taking $\phi \equiv 1$ in \eqref{eqn:exactness_intro} gives $\sum_i \rho_i x_i = 0$, so both $\nu_+$ and $\nu_-$ are probability measures. Let the new measure $\nu$ be the one with smaller support. Then
\[
	\int_{\Sph^{d-1}}q_K(\ip{u}{y}) \, d\nu(y) = \int_{\Sph^{d-1}}q_K(\ip{u}{y})\, d\mu(y)
\]
for all $u \in \Sph^{d-1}$. Thus the polynomial component cancels exactly; 
the remaining error comes from $b_K$ and we need to control
\[
	\int_{\Sph^{d-1}} b_K(\ip{u}{y}) \, d(\nu-\mu)(y).
\]
Write $b_K(t)=\beta_K(|t|)$. 
Since $\sum_{i=1}^M \rho_i x_i=0$, the boundary term vanishes in Stieltjes integration by parts, and for every $u\in\Sph^{d-1}$,
\[
	\left| \int_{\Sph^{d-1}} b_K(\ip{u}{y}) \, d(\nu - \mu)(y) \right|
	= \left| \int_0^1 \sum_{i=1}^M \rho_i x_i \mathbf{1}_{\cB(u,t)}(z_i)\,d\beta_K(t) \right|,
\]
where
\[
	\cB(u,t) = \{y \in \Sph^{d-1}:|\langle u,y\rangle| \le t\}
\]
are equatorial bands. It is tempting to upper bound this quantity simply by
\begin{equation}\label{eqn:naive_upper_bound}
	\left| \int_0^1 \sum_{i=1}^M \rho_i x_i \mathbf{1}_{\cB(u,t)}(z_i)\,d\beta_K(t) \right|
	\leq
	\norm{\beta_K}_{\mathrm{TV}[0,1]} \sup_{u\in \Sph^{d-1}, t \in [0,1]} \left|\sum_{i=1}^M \rho_i x_i \mathbf{1}_{\cB(u,t)}(z_i)\right|.
\end{equation}
The supremum is the discrepancy of equatorial bands. One can pair the points using the low-crossing matching theorem (see Theorem~\ref{thm:low-crossing-matching}). Weighted mass is
then transferred within each pair. Every equatorial band crosses
at most $O_d(M^{1-1/(d-1)})$ pairs. The exactness conditions are linear in
these pairwise perturbations and define a subspace of codimension at most
$\dim\cP_K(\Sph^{d-1})$. Moreover, the discrepancy vector of a band is sparse,
since it is supported only on the pairs crossed by that band. Rothvoss's
subspace partial colouring theorem~\cite{rothvoss} therefore produces a
perturbation that preserves exactness, controls all band discrepancies, and
saturates a constant fraction of the unsaturated variables. One can show that
\[
	\sup_{u\in \Sph^{d-1}, t\in [0,1]} \left|\sum_{i=1}^M \rho_i x_i \mathbf{1}_{\cB(u,t)}(z_i)\right| \lesssim_d M^{-\frac12-\frac{1}{2(d-1)}}\sqrt{\log(2M)}.
\]
Since $M$ becomes a constant factor smaller each time, one can repeat this process until $M\asymp_d K^{d-1}$. We stop at this scale because, below it, the codimension hypothesis required by the partial-colouring theorem is no longer guaranteed. Thus, if the final size is $N$ (so $K\asymp_d N^{1/(d-1)}$), the total error will be
\[
	O\left(K^{-p}\cdot N^{-\frac12-\frac{1}{2(d-1)}}\sqrt{\log(2N)}\right) = \tilde{O}(N^{-(d+2p)/(2(d-1))}).
\]
This is the correct power of $N$, but it introduces additional logarithmic factors.

The loss comes from the na\"ive bound \eqref{eqn:naive_upper_bound}, since it separates the total variation of $b_K$ from a uniform band discrepancy estimate, and the latter estimate has a logarithmic factor. To avoid this loss, we retain the Stieltjes integral throughout the partial colouring process and build the discrepancy vectors only after integrating against $d\beta_K$. Suppose that the unsaturated indices are paired by a low-crossing matching $\cE$. For $e = \{a,b\}\in\cE$, define
\[
	A_u(e) = \int_0^1 (\mathbf{1}_{\cB(u,t)}(z_a) - \mathbf{1}_{\cB(u,t)}(z_b)) \,d\beta_K(t).
\]
If $\theta_e$ is the amount of mass transferred along the pair $e$, then the corresponding increment in integrated discrepancy has the form
\[
	\sum_{e\in\cE} \theta_e A_u(e).
\]
Thus the problem is to choose a vector $\theta$ so as to control these sums uniformly over $u$, while preserving the exactness conditions.

The key change from the earlier approach is that a chaining argument is applied
to appropriate linear transformations of the integrated vectors
$\{A_u:u\in\Sph^{d-1}\}$. The packing lemma (see
Corollary~\ref{cor:packing}) provides the covering-number bounds required for
chaining. The chaining argument then constructs a symmetric convex set that has
sufficiently large Gaussian measure and enforces a uniform bound on integrated
discrepancy. Rothvoss's partial-colouring theorem applied to this set produces a
transfer vector $\theta$
that saturates a constant fraction of the unsaturated variables, preserves
polynomial exactness, and, at a stage with $M$ active atoms, satisfies
\[
	\sup_{u\in\Sph^{d-1}}
	\left|\sum_{e\in\cE}\theta_e A_u(e)\right|
	\lesssim_d 
	M^{-\frac12-\frac{1}{2(d-1)}} \norm{\beta_K}_{\mathrm{TV}[0,1]}.
\]
This removes the logarithmic factor in the na\"ive estimate and, after iteration, yields the desired tight bound.

\section{Ingredients for Sparsification with Additive Error}

\paragraph{Notation} In the remainder of this paper, $d \ge 2$ is fixed and $n = d - 1$. For $u \in \Sph^n$
and $t \in [0,1]$, define the equatorial band
\[
	\cB(u,t) = \{y \in \Sph^n:|\ip{u}{y}| \le t\}.
\]

Let $\cP_K(\Sph^n)$ denote the restrictions to $\Sph^n$ of real polynomials
on $\R^{n+1}$ of degree at most $K$.

For an indexed family $Y = (y_1,\dots,y_m)$ of points in $\Sph^n$ and a linear space $V$ of real-valued functions, define
\[
	V|_Y = \{(q(y_1),\dots,q(y_m)): q \in V\} \subseteq \R^m.
\]
If $I \subseteq [m]$, we write $Y_I = (y_i)_{i \in I}$ and $V|_I := V|_{Y_I}$.

Let $\mu$ and $\nu$ be finite measures on $\Sph^n$, and let
$V \subset C(\Sph^n)$. We say that $\mu$ and $\nu$ are exact on $V$ if
\[
	\int_{\Sph^n} q(y)\,d\nu(y) = \int_{\Sph^n} q(y)\,d\mu(y) \qquad \forall q \in V.
\]

When writing a finitely supported measure $\mu$ as
\[
	\mu = \sum_{i = 1}^m \alpha_i \delta_{y_i},
\]
we allow repetitions among the points $y_i$, and $m$ counts atoms with multiplicity. Such a representation is called \(B\)-balanced if \(\alpha_i\le B/m\) for every \(i\).

Throughout, $C_n, C_{n,p}, \dots$ denote positive constants depending only on the indicated parameters;
unless explicitly fixed in a theorem or lemma statement, their values may change from one occurrence to the next.

\subsection{Sparsification with band discrepancy control}

The proof of our sparsification result is largely reduced to the following discrepancy estimate.
It replaces a finitely supported probability measure by another, supported on fewer points,
that is exact on a prescribed finite-dimensional space, while controlling discrepancies for equatorial bands. Its proof is given in \Cref{sec:sparsification-proof}.

\begin{theorem}[Sparsification with integrated band discrepancy control]
	\label{input:finite-band-sampling} Let $n \ge 1$ be fixed. There exist
	constants $c_n, C_n > 0$ such that the following holds.

	Let $N \ge 2$ be an integer, and let
	\[
		\mu = \sum_{i = 1}^M \rho_i \delta_{z_i}, \qquad \rho_i > 0,\quad \sum_{i = 1}^M \rho_i = 1, \quad z_i \in \Sph^n
	\]
	be a finitely supported probability measure on \(\Sph^n\). Let \(Z = (z_1,\dots,z_M)\) be the indexed family associated with $\mu$, and \(V\subset C(\Sph^n)\) be a finite-dimensional linear subspace containing the constants such that
	\[
		\dim V|_Z \le c_n N.
	\]
	Let $\xi$ be a finite signed Borel measure on $[0,1]$.
	Then there exists a positive atomic probability measure
	\[
		\nu = \sum_{i = 1}^{N'}w_i\delta_{y_i}, \qquad N' \le N,\quad y_i \in \supp
		(\mu),\quad w_i > 0,
	\]
	such that $\nu$ and $\mu$ are exact on $V$, that is,
	\[
		\int_{\Sph^n}q(y)\,d\nu(y) = \int_{\Sph^n}q(y)\,d\mu(y) \qquad \forall q \in V
		,
	\]
	and
	\[
	\sup_{u \in \Sph^n}\left|\int_0^1\bigl(\nu(\cB(u,t)) - \mu(\cB(u,t))\bigr)\,d\xi(t)\right| \le
		C_n N^{-1/2-1/(2n)}\norm{\xi}_{\mathrm{TV}}.
	\]
\end{theorem}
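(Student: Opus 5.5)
The plan is to run an iterative partial-colouring scheme in the spirit of Matou\v{s}ek, applied with Rothvoss's subspace partial colouring theorem. The chaining is done on the \emph{integrated} discrepancy vectors, so that the $\sqrt{\log M}$ loss of the naive bound \eqref{eqn:naive_upper_bound} disappears.

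First we normalise. Splitting heavy atoms (repetitions are allowed), we make the representation of $\mu$ $2$-balanced with $M$ atoms. If $M\le N$ we simply take $\nu=\mu$. Otherwise we maintain a fractional vector $x\in[-1,1]^{M}$ with current measure $\mu_x=\sum_i\rho_i(1+x_i)\delta_{z_i}$ (normalised so that it is balanced on the active indices). At each round let $I$ be the set of active (unsaturated) atoms, $m=|I|$, and keep iterating while $m\ge N$. Apply the low-crossing matching theorem (Theorem~\ref{thm:low-crossing-matching}) to $Y_I$. This gives a matching $\cE$ of $\lfloor m/2\rfloor$ pairs such that every band boundary $\{|\ip{u}{y}|=t\}$, being a union of two parallel hyperplane sections of $\Sph^n$, crosses $O_n(m^{1-1/n})$ pairs.

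The variables are the transfer amounts $\theta_e$ along pairs $e=\{a,b\}$ (mass $+\theta_e$ at $a$, $-\theta_e$ at $b$, scaled by the weights). Pair transfers preserve total mass automatically. Exactness on $V$ is a linear constraint whose codimension in $\R^{\cE}$ is at most $\dim V|_Z\le c_nN\le c_n m$. With $c_n$ small this is at most a small constant fraction of $|\cE|$, as Rothvoss's theorem requires.

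The integrated discrepancy increment is $\sum_e\theta_e\rho A_u(e)$, where $A_u(e)=\int_0^1(\mathbf 1_{\cB(u,t)}(z_a)-\mathbf 1_{\cB(u,t)}(z_b))\,d\xi(t)$. For each $e$, $|A_u(e)|\le \|\xi\|_{\mathrm{TV}}$, and $A_u(e)\neq 0$ only if some band boundary separates $a$ from $b$. The key step is to build a symmetric convex body
\[
K=\Bigl\{\theta:\ \sup_u\Bigl|\sum_e\theta_eA_u(e)\Bigr|\le \Lambda\, m^{1/2-1/(2n)}\|\xi\|_{\mathrm{TV}}\Bigr\}
\]
with Gaussian measure at least $e^{-c m}$, so that Rothvoss's theorem applies.

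For this we chain over $u$. Take nested nets $\cN_j\subset\Sph^n$ at scales $2^{-j}$. For neighbouring $u,u'$ at scale $\delta$, the difference $A_u-A_{u'}$ is supported on the pairs $e$ with an endpoint in the symmetric difference of the bands. Using the packing lemma (Corollary~\ref{cor:packing}) together with the crossing bound, the number of such pairs is at most $O_n(m\delta+m^{1-1/n})$, and each entry is bounded by $\|\xi\|_{\mathrm{TV}}$. The point of integrating against $\xi$ first is that the vector $A_u$ depends only on $u$, not on $(u,t)$, so the index set has dimension $n$ rather than $n+1$. The net sizes $|\cN_j|\lesssim 2^{jn}$ combined with the $\ell_2$ norms $\sqrt{m2^{-j}}\|\xi\|_{\mathrm{TV}}$ then give a geometric, log-free sum: constraints at level $j$ with thresholds $\lambda_j\asymp \sqrt{m2^{-j}}\sqrt{jn}\,\cdot(\text{decaying weight})$ have Gaussian failure mass summing to $\le cm$. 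We stop at the scale $2^{-j}\approx m^{-1/n}$, below which the crossing bound $m^{1-1/n}$ dominates. Intersecting the slab constraints, as in the Matou\v{s}ek/Bansal-type chaining via Šidák–Khatri, yields $K$ with total discrepancy $O_n(m^{1/2-1/(2n)})\|\xi\|_{\mathrm{TV}}$. This entropy bookkeeping, making the sum over levels converge without a $\log m$, is the step I expect to be the main obstacle.

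Rothvoss's theorem then gives $\theta\in K$ in the exactness subspace, with $x$ staying in the box and a constant fraction of pairs saturated. Multiplying by the per-atom weight $\asymp 1/M_0\cdot(M_0/m)$ after rebalancing, the measure changes by $O_n(m^{-1/2-1/(2n)})\|\xi\|_{\mathrm{TV}}$ in integrated discrepancy. Saturated atoms become either dead (weight $0$) or doubled; doubled atoms are re-split so the active count shrinks geometrically. Summing the geometric series over rounds until $m\asymp N$ gives the total error $C_nN^{-1/2-1/(2n)}\|\xi\|_{\mathrm{TV}}$. The final measure has at most $N$ atoms (after merging), nonnegative weights, unit mass, exactness on $V$, and support in $\supp(\mu)$.
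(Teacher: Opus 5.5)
Your overall architecture matches the paper's: low-crossing pairing, transfers $\theta_e$ along pairs, exactness as a linear subspace, and Rothvoss's theorem applied to a body obtained by chaining over the \emph{integrated} vectors $A_u$. However, the chaining step, which is the entire content of the log-free bound, is set up incorrectly.

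First, $A_u-A_{u'}$ is not supported on few pairs. We have $a_u(e,t)\neq a_{u'}(e,t)$ whenever $t$ lies between $|\ip{u}{y}|$ and $|\ip{u'}{y}|$ for an endpoint $y$ of $e$. So for a spread-out $\xi$ such as $d\beta$, the integrated difference is typically nonzero on every pair. Even for fixed $t$, Euclidean closeness of $u,u'$ gives no control on how many of the arbitrary points $y_i$ fall in $\cB(u,t)\triangle\cB(u',t)$. Hence Euclidean nets of size $2^{jn}$ on $\Sph^n$ do not yield increments $\sqrt{m2^{-j}}\,\|\xi\|_{\mathrm{TV}}$. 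What does hold is the $\ell_2$ estimate $\|A_u-A_{u'}\|_2\le 2\sqrt{k\,\rho(u,u')}\,\|\xi\|_{\mathrm{TV}}$. Here $\rho(u,u')$ is the probability that $a_u(e,t)\neq a_{u'}(e,t)$ when $e$ is uniform on $\cE$ and $t\sim|\xi|/\|\xi\|_{\mathrm{TV}}$. The nets must be taken in this data-dependent pseudometric, and Corollary~\ref{cor:packing}, with the $O(\ell^n)$ sign-pattern bound for $(e,t)\mapsto a_u(e,t)$, bounds their size by $O(\delta^{-n})$.

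Second, your chain covers the wrong range of scales. You chain through the coarse scales and stop at scale $m^{-1/n}$. That leaves $\sup_u|\ip{D(A_u-A_{\pi(u)})}{h}|$ uncontrolled. The crossing bound alone gives only $O(\tau\Lambda\|\xi\|_{\mathrm{TV}})=O(\tau m^{1-1/n}\|\xi\|_{\mathrm{TV}})$ for it, even using $\|h\|_\infty\le 2$, which is too large when $n\ge 2$.

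In the paper the crossing bound disposes of the coarse part in one step: $\|DA_u\|_2\le\tau\sqrt{\Lambda}\,\|\xi\|_{\mathrm{TV}}$ for all $u$. So the chain starts at $\widetilde R\asymp\tau k^{1/2-1/(2n)}\|\xi\|_{\mathrm{TV}}$, where the covering number is $O(k)$. It then continues through \emph{all} finer scales $2^{-j/2}\widetilde R$, with slabs of width $A\sqrt{j}$ and $j$ counted from the starting level. Net sizes $O(k\,2^{nj})$ against Gaussian tails $e^{-A^2j/2}$ give \v{S}id\'ak cost $O(k)$, and the increments sum to $O(\widetilde R)$. This relative indexing is what removes the $\sqrt{\log m}$ that absolute $\sqrt{jn}$ thresholds produce. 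The body must also be built in the normalised coordinates $\theta=D(z-z^0)$ inside the exactness subspace $F$, i.e.\ for the vectors $P_FDA_u$.

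Your support-reduction bookkeeping also does not close. You keep a single vector $x$, the one-sided measure $\mu_x=\sum_i\rho_i(1+x_i)\delta_{z_i}$, and the stopping rule ``while the number of active atoms is $\ge N$''. Under that scheme every atom with $x_i=+1$ stays in the support, so the final support is not bounded by $N$. ``Re-splitting'' the doubled atoms either undoes the reduction, or, if they are merely reactivated, lets the balance constant blow up. The weights are balanced only from above, so the number of atoms with $x_i=+1$ can be close to $m(1-1/(2B))$ rather than $m/2$.

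The paper instead keeps the weights fixed and runs partial-colouring rounds until at most $N/4$ coordinates are unsaturated; the increments $C_nBm^{-1}r^{1/2-1/(2n)}$ then sum geometrically in $r$. It then picks the sign $\sigma$ for which $\eta_\sigma=\sum_i\alpha_i(1+\sigma x_i)\delta_{y_i}$ has fewer atoms. The supports of $\eta_+$ and $\eta_-$ contain $m+|I^*|$ atoms in total, so this gives $m_{j+1}\le m_j/2+N/8$ with weights $\le 2B_j/m_j$. Hence $B_{j+1}\le B_j(1+N/(4m_j))$ with a bounded product, and this is what makes the final geometric sum over the outer rounds valid.
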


\subsection{\texorpdfstring{Polynomial approximation of $|t|^p$}{Polynomial approximation}}

The following is our main polynomial-approximation lemma.
\begin{lemma}\label{lem:polynomial_approx}
Let $p\geq 1$. There exist constants $C_p, K_p > 0$ such that for every integer $K\geq K_p$, there exists an even polynomial $p_K$ of degree at most $K$ such that $p_K(0) = 0$ and
\[
	\bigl\| |t|^p - p_K(t) \bigr\|_{\mathrm{TV}[-1,1]} \leq C_p K^{-p}.
\]
\end{lemma}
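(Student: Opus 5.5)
The plan is to reduce the total-variation estimate to an $L^1$ polynomial approximation of the derivative. The function $f(t)=|t|^p-p_K(t)$ is absolutely continuous on $[-1,1]$ for $p\ge 1$, so
\[
\bigl\||t|^p-p_K\bigr\|_{\mathrm{TV}[-1,1]}=\int_{-1}^1\bigl|g(t)-p_K'(t)\bigr|\,dt,\qquad g(t):=p\,|t|^{p-1}\sgn(t).
\]
It therefore suffices to find a polynomial $q$ of degree at most $K-1$ with $\|g-q\|_{L^1[-1,1]}\le C_pK^{-p}$. We then set $p_K(t)=\int_0^t q(s)\,ds$. Since $g$ is odd, we may replace $q$ by its odd part $\tfrac12(q(t)-q(-t))$ without increasing the $L^1$ error. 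With $q$ odd, $p_K$ is even, has degree at most $K$, and satisfies $p_K(0)=0$, as the statement requires.

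\textbf{Step 1 (the main estimate): $L^1$ approximation of $g$ at rate $K^{-p}$.} The function $g$ is smooth on $[-1,1]\setminus\{0\}$ and has an interior singularity of type $|t|^{\alpha}$ at $0$, with $\alpha=p-1\ge 0$. For $p=1$ this is a jump, since $g=\sgn$. I would invoke the Jackson-type estimate of Ditzian--Totik in $L^1[-1,1]$:
\[
E_{K-1}(g)_{L^1}\lesssim_r \omega^r_\varphi(g,1/K)_{L^1},\qquad \varphi(t)=\sqrt{1-t^2}.
\]
To avoid the endpoint weight, I would first split $g=g\chi+g(1-\chi)$, where $\chi$ is a smooth cutoff equal to $1$ near $0$ and supported in $[-1/2,1/2]$. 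The piece $g(1-\chi)$ is $C^\infty$ on $[-1,1]$, so it is approximable at rate $O_r(K^{-r})$ for every $r$. On the support of $g\chi$ we have $\varphi\asymp 1$, so the weighted modulus of $g\chi$ is comparable to the ordinary $r$-th modulus $\omega_r(g\chi,1/K)_{L^1}$.

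\textbf{Step 2: bounding the modulus.} Fix an integer $r>p$ and $h=1/K$. I would bound $\int|\Delta_h^r(g\chi)(t)|\,dt$ by splitting the domain at $|t|=2rh$. On the near region $|t|\le 2rh$ the differences are bounded pointwise by $C h^{\alpha}$, so this region contributes $O(h\cdot h^{\alpha})=O(h^p)$. On the far region $|t|>2rh$ the function is smooth on the whole stencil, and there
\[
|\Delta_h^r(g\chi)(t)|\lesssim h^r\,|t|^{\alpha-r}+h^r.
\]
Integrating, this region contributes $O(h^r\,h^{\alpha-r+1})=O(h^p)$, where we use $\alpha-r<-1$, i.e.\ $r>p$. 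This gives $\omega_r(g\chi,1/K)_{L^1}\lesssim_p K^{-p}$, and combined with Step 1 it yields the desired $q$ for all $K\ge K_p$. The threshold $K_p$ absorbs the requirement that $K-1\ge r$ and the cutoff constants.

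\textbf{Expected obstacle and fallback.} The main obstacle is justifying the Jackson step cleanly in $L^1$ on an interval. A naive route through a periodic extension gives trigonometric rather than algebraic polynomials. My first choice is simply to cite the Ditzian--Totik theorem, which is algebraic and valid in $L^1$. If a self-contained argument is preferred, I would use the substitution $t=\cos\theta$: the function $G(\theta)=g(\cos\theta)|\sin\theta|$ is even and $2\pi$-periodic, and $L^1(d\theta)$ approximation of $G$ corresponds to $L^1(dt)$ approximation of $g$. Convolving $g(\cos\theta)$ with a generalized Jackson kernel of order $r$ produces an algebraic polynomial in $\cos\theta$ of degree $K-1$. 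The error is controlled by the same near/far splitting around $\theta=\pi/2$, which is the preimage of the singular point $t=0$, where $|\sin\theta|\asymp 1$.
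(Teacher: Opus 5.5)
Your proposal is correct, and its outer layer is the same as the paper's. The paper also writes the total variation as the $L^1$ norm of the derivative. It approximates $\sgn(t)|t|^{p-1}$ by an odd polynomial $q_{K-1}$ of degree at most $K-1$ (Lemma~\ref{lem:important_approx} with $\alpha=p-1$), and sets $p_K(t)=p\int_0^t q_{K-1}(s)\,ds$.

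The difference lies in how the $L^1$ Jackson estimate is obtained.

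\textbf{Your route.} You cite the algebraic Ditzian--Totik theorem on $[-1,1]$ and bound the modulus directly in the $t$-variable.

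\textbf{The paper's route.} The paper uses only the classical periodic Jackson theorem.
\begin{itemize}
\item It applies that theorem to the even $2\pi$-periodic function $G_\alpha(\theta)=\sgn(\cos\theta)|\cos\theta|^\alpha$.
\item It bounds $\omega_r(G_\alpha;\delta)_{L^1}\lesssim\delta^{\alpha+1}$ by the same near/far split you use, taken around $\theta=\pi/2,3\pi/2$ with $r>\alpha+1$.
\item It symmetrizes the approximant to a cosine polynomial $q_K(\cos\theta)$ and takes its odd part.
\item It transfers the estimate back via $\int_{-1}^1|h(t)|\,dt=\int_0^\pi|h(\cos\theta)|\sin\theta\,d\theta\le\frac12\int_0^{2\pi}|h(\cos\theta)|\,d\theta$.
\end{itemize}
That last inequality is exactly what removes the obstacle you anticipated with periodic extensions.

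\textbf{Your fallback.} It essentially rediscovers the paper's argument, with two corrections.
\begin{itemize}
\item The function to approximate is $g(\cos\theta)$ itself, not $g(\cos\theta)|\sin\theta|$. The weight enters only in the transfer step, where it is bounded by $1$.
\item A single positive convolution kernel saturates at rate $K^{-2}$. For $p>2$ you therefore need the genuine $r$-th order Jackson construction, or simply the Jackson theorem for $\omega_r$ as the paper uses.
\end{itemize}

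\textbf{The cutoff in your main route.} The only comparison you need is the one-sided bound $\omega^r_\varphi(f,t)_1\lesssim\omega_r(f,t)_1$. This holds for every $f$ because $\varphi\le 1$, for instance via $K$-functionals. So the cutoff $\chi$ is harmless but unnecessary; since $g$ is smooth near $\pm1$, you could bound the ordinary modulus of $g$ directly.

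\textbf{What each route buys.} Yours avoids the change of variables at the price of a heavier citation. The paper's is self-contained apart from the periodic Jackson theorem in Timan, with a fully elementary modulus estimate.
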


The proof of the lemma is based on the following Jackson-type approximation result in $L_1$-norm, whose proof is deferred to Appendix~\ref{sec:L_1-approx}.
\begin{lemma}\label{lem:important_approx}
	Let $\alpha \ge 0$. There exist constants $C_\alpha, K_\alpha > 0$ such that for every integer $K\geq K_\alpha$, there exists an odd polynomial $p_K$ of degree at most $K$ such that
	\[
		\int_{-1}^1 \left| \sgn(t)|t|^\alpha - p_K(t) \right| \, dt \leq C_\alpha K^{-\alpha-1}.
	\]
\end{lemma}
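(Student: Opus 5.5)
The statement to prove is Lemma~\ref{lem:important_approx}, which is the result immediately above. However, the paper has just said that this lemma is the input for Lemma~\ref{lem:polynomial_approx}, so I first sketch how Lemma~\ref{lem:polynomial_approx} follows from Lemma~\ref{lem:important_approx}, and then plan the proof of Lemma~\ref{lem:important_approx} itself.

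\emph{Reduction of Lemma~\ref{lem:polynomial_approx}.} Differentiate: $\frac{d}{dt}|t|^p = p\,\sgn(t)|t|^{p-1}$. Apply Lemma~\ref{lem:important_approx} with $\alpha=p-1\ge 0$ and degree $K-1$. This gives an odd polynomial $r$ of degree at most $K-1$ with
\[
\int_{-1}^1\bigl|\sgn(t)|t|^{p-1}-r(t)\bigr|\,dt\le C K^{-p}.
\]
Set $p_K(t)=p\int_0^t r(s)\,ds$. Then $p_K$ is even, has degree at most $K$, and satisfies $p_K(0)=0$. The function $|t|^p-p_K$ is absolutely continuous, so its total variation on $[-1,1]$ equals the $L^1$ norm of its derivative, which is at most $pCK^{-p}$.

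\emph{Proof plan for Lemma~\ref{lem:important_approx}.} Write $f(t)=\sgn(t)|t|^\alpha$. The function $f$ is smooth away from $0$ and, for $\alpha>0$, bounded by $|t|^\alpha$ near $0$. I will use a two-scale construction at scale $h=1/K$.

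First, smooth $f$ near the origin. Let $g = f*\varphi_h$, where $\varphi_h$ is an even mollifier supported in $[-h,h]$; extend $f$ smoothly beyond $[-1,1]$ so the convolution is defined. Then $g$ is odd and $g=f$ outside $[-2h,2h]$. Inside that interval, $|f-g|\lesssim h^\alpha$, so
\[
\|f-g\|_{L^1}\lesssim h^{\alpha+1}=K^{-\alpha-1}.
\]
The derivatives of $g$ obey $|g^{(r)}(t)|\lesssim_r (|t|+h)^{\alpha-r}$.

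Second, approximate $g$ by a polynomial with an error bound that adapts to position in the interval. I would use the Ditzian--Totik weighted modulus or, more concretely, a Jackson-type kernel operator such as trigonometric Jackson kernels after the substitution $t=\cos\theta$. One caveat: near $t=0$ the Chebyshev substitution gives a resolution of $1/K$ in $t$, which is exactly the scale $h$, so this is consistent. Pointwise Jackson estimates give
\[
|g(t)-P(t)|\lesssim \sum_r K^{-r}\sup_{|s-t|\lesssim 1/K}|g^{(r)}(s)|\lesssim K^{-r}(|t|+1/K)^{\alpha-r}
\]
for a fixed large $r>\alpha+1$ in the interior. Near $t=\pm1$ the function $f$ is smooth, so the endpoint behaviour is harmless. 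Integrating the pointwise bound over $t$ gives
\[
\int K^{-r}(|t|+1/K)^{\alpha-r}\,dt\lesssim K^{-r}K^{r-\alpha-1}=K^{-\alpha-1},
\]
since $r>\alpha+1$. To make the final polynomial odd, replace $P$ by its odd part $(P(t)-P(-t))/2$; since $f$ is odd, the $L^1$ error does not increase.

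An alternative is to approximate $f$ directly with a localized kernel $P(t)=\int f(s)\,k_K(t,s)\,ds$, using a polynomial kernel with fast off-diagonal decay such as $(1+K|t-s|)^{-m}$. The error at $t$ is then $O\bigl((|t|+1/K)^{\alpha}\bigr)$ times a small factor near $0$, and $O\bigl(K^{-r}|t|^{\alpha-r}\bigr)$ away from $0$ using Taylor's theorem with moment cancellation. Integrating these gives the same $K^{-\alpha-1}$ bound. The main obstacle is the pointwise local error estimate near the singularity $t=0$, uniformly in $\alpha$ including non-integer values; I would cite or verify the standard localized polynomial kernel bounds there. The $\alpha=0$ case, the $L^1$ approximation of $\sgn$ with error $1/K$, is classical and fits the same scheme.
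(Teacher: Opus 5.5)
Your reduction of Lemma~\ref{lem:polynomial_approx} to this lemma matches the paper's. The plan for Lemma~\ref{lem:important_approx} itself has a genuine gap, and its first step is also wrong as stated.

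\textbf{The smoothing step.} Convolving with a standard (nonnegative) even mollifier does not give $g=f$ outside $[-2h,2h]$ unless $f$ is affine there. For $t>2h$ one has $f*\varphi_h(t)-f(t)=\tfrac12\int s^2 f''(\xi_s)\varphi_h(s)\,ds\asymp h^2t^{\alpha-2}$. So for every $\alpha>1$ (e.g.\ $f=t|t|$) the $L^1$ smoothing error is $\asymp h^2$, far larger than the target $h^{\alpha+1}$. This is repairable with a cutoff, $g=(1-\chi(t/h))f$, which does equal $f$ off $[-2h,2h]$ and satisfies $|g^{(r)}|\lesssim(|t|+h)^{\alpha-r}$.

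\textbf{The approximation step.} This is the serious problem. The bound $|g(t)-P(t)|\lesssim K^{-r}\sup_{|s-t|\lesssim 1/K}|g^{(r)}(s)|$ is false as a general principle. Take a smooth $g$ that vanishes on $[-\tfrac12,\tfrac12]$ but not identically. The bound would force $P=0$ on an interval, hence $P\equiv 0$. That contradicts the bound wherever $g\neq0$ once $K$ is large. Any valid pointwise estimate must therefore carry tail contributions from the kernel. Your Taylor/moment-cancellation step also needs a kernel that reproduces polynomials of degree $<r$ while decaying fast off the diagonal in the $\cos$-adapted metric. Constructing such a kernel and verifying the resulting bound near $t=0$ is precisely the content of the lemma, and the proposal leaves it open (``I would cite or verify\ldots'').

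\textbf{How the paper avoids this.} The paper never uses pointwise estimates.
\begin{enumerate}
\item It sets $G_\alpha(\theta)=\sgn(\cos\theta)|\cos\theta|^\alpha$ and applies the classical $L^1$ Jackson theorem, $\inf_{\deg T\le K}\|G_\alpha-T\|_{L^1}\lesssim_r\omega_r(G_\alpha;1/(K+1))_{L^1}$.
\item It proves $\omega_r(G_\alpha;\delta)_{L^1}\lesssim\delta^{\alpha+1}$ for $r>\alpha+1$ by splitting at the zeros of $\cos$. Within $2r\delta$ of them, every term of $\Delta_h^rG_\alpha$ is $O(\delta^\alpha)$ on a set of measure $O(\delta)$. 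Elsewhere, $\Delta_h^rG_\alpha$ is an $r$-fold integral of $G_\alpha^{(r)}$, which gives $|h|^r\int_{r\delta}^{\pi/2}x^{\alpha-r}\,dx\lesssim\delta^{\alpha+1}$.
\item It transfers back to $[-1,1]$ using $dt=\sin\theta\,d\theta\le d\theta$, then symmetrizes and takes the odd part as you describe.
\end{enumerate}
Step 2 is your computation $\int K^{-r}(|t|+1/K)^{\alpha-r}\,dt\lesssim K^{-\alpha-1}$, but carried out on the integral modulus. There Jackson's theorem is available off the shelf, so neither mollification nor localization is needed. Completing your route would require supplying a localized estimate with tails, e.g.\ from a high-order trigonometric Jackson kernel in $\theta$ with the $r$-th difference construction. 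That essentially reproduces the paper's argument.
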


Now we prove Lemma~\ref{lem:polynomial_approx}.

\begin{proof}[Proof of Lemma~\ref{lem:polynomial_approx}]
	Let $\alpha = p-1$. By Lemma~\ref{lem:important_approx}, there exist constants $C_p, K_p > 0$ such that for every $K\geq K_p$, there exists an odd polynomial $q_{K-1}$ of degree at most $K-1$ such that
	\[
		\int_{-1}^1 \left| \sgn(t)|t|^\alpha - q_{K-1}(t) \right| \, dt \leq C_p K^{-p}.
	\]
	Suppose that $K\geq K_p+1$ and define
	\[
		p_K(t) = p \int_0^t q_{K-1}(s) \, ds.
	\]
	Since $q_{K-1}$ is odd, $p_K$ is even. It is also clear that $p_K(0) = 0$ and $\deg(p_K)\leq K$. The function $|t|^p - p_K(t)$ is absolutely continuous on $[-1,1]$, therefore
	\begin{align*}
		\bigl\| |t|^p - p_K(t) \bigr\|_{\mathrm{TV}[-1,1]} 
		&\leq \int_{-1}^1 \left| \frac{d}{dt}(|t|^p - p_K(t)) \right|\,dt\\
		&= p \int_{-1}^1 \left| \sgn(t)|t|^{p-1} - q_{K-1}(t) \right|\,dt \\
		&\leq C_p K^{-p}. \qedhere
	\end{align*}
\end{proof}

\section{Proof of Sparsification with Additive Error}

\begin{proof}[Proof of Theorem~\ref{thm:additive}]
	Let $n = d - 1$.
	Choose an integer $N$, to be fixed later. By increasing the hidden constant in
	the final support bound, we may assume $N \ge N_0(d,p)$. The standard dimension
	formula for spherical polynomials (see~\cite[Corollary 1.1.5]{DX13}) gives
	\[
		\dim\cP_K(\Sph^n) = \binom{n+K}{n} + \binom{n+K-1}{n} = O_n(K^n).
	\]
	Choose $\gamma_n > 0$ sufficiently small and set
	\[
		K = \left\lfloor\gamma_n N^{1/n}\right\rfloor.
	\]
	By increasing $N_0(d,p)$ if necessary, we have $K \ge K_p$, and the choice
	of $\gamma_n$ ensures that
	\[
		\dim\cP_K(\Sph^n) \le c_n N,
	\]
	where $c_n$ is the constant in Theorem~\ref{input:finite-band-sampling}.

	Let $p_K$ be the polynomial given by Lemma~\ref{lem:polynomial_approx}, and set
	\[
		b(t)=|t|^p-p_K(t)\quad (t\in[-1,1]).
	\]
	Write $b(t)=\beta(|t|)$ for $t\in[-1,1]$. Then the associated finite signed
	Stieltjes measure $d\beta$ on $[0,1]$ satisfies
	\[
		\norm{d\beta}_{\mathrm{TV}} = \frac{1}{2}\norm{b}_{\mathrm{TV}[-1,1]} \le C_p K^{-p}.
	\]

	Apply Theorem~\ref{input:finite-band-sampling} to the finitely supported measure
	$\mu$ with $V = \cP_K(\Sph^n)$ and the signed measure $d\beta$. We obtain a positive atomic probability
	measure
	\[
		\nu = \sum_{i = 1}^{N'}w_i\delta_{y_i}, \qquad N' \le N,\quad y_i \in \supp(\mu
		),
	\]
	such that
	\[
		\int_{\Sph^n}Q(y)\,d\nu(y) = \int_{\Sph^n}Q(y)\,d\mu(y) \qquad \forall Q \in
		\cP_K(\Sph^n),
	\]
	and
	\[
	\Delta := \sup_{u \in \Sph^n}\left|\int_0^1\bigl(\nu(\cB(u,t)) - \mu(\cB(u,t
		))\bigr)\,d\beta(t) \right| \le C_{n,p} K^{-p} N^{-1/2-1/(2n)}.
	\]

	Let
	\[
		\lambda = \nu - \mu.
	\]
	Then $\lambda(\Sph^n) = 0$.

	Fix $u \in \Sph^n$ and set $Q_{u,K}(y)=p_K(\ip{u}{y})$. Then
	$Q_{u,K} \in \cP_K(\Sph^n)$.

	Since
	\[
		\int_{\Sph^n} Q_{u,K}(y)\,d\lambda(y) = 0,
	\]
	and $\lambda(\Sph^n)=0$, Stieltjes integration by parts gives
	\[
		\int_{\Sph^n} b(\ip{u}{y})\,d\lambda(y)
		= -\int_0^1 \lambda(\cB(u,t))\,d\beta(t).
	\]
	Hence
	\[
		\left| \int_{\Sph^n} |\ip{u}{y}|^p\,d\lambda(y) \right| = \left| \int_{\Sph^n} b(\ip{u}{y})\,d\lambda(y) \right| 
		\le \Delta.
	\]
	Therefore
	\[
	\sup_{u \in \Sph^n}\left| \int_{\Sph^n} |\ip{u}{y}|^p\,d\nu(y) - \int_{\Sph^n} |\ip{u}{y}|^p
		\,d\mu(y) \right| \le C_{d,p} K^{-p} N^{-1/2-1/(2n)}.
	\]
	Since
	\[
		K \asymp_n N^{1/n},
	\]
	we obtain
	\[
	\sup_{u \in \Sph^n}\left| \int_{\Sph^n} |\ip{u}{y}|^p\,d\nu(y) - \int_{\Sph^n} |\ip{u}{y}|^p
		\,d\mu(y) \right|
		\leq C_{d,p}N^{-\frac{d+2p}{2(d-1)}}.
	\]
	Consequently, choosing
	\[
		N\asymp_{d,p}
		\eps^{-\frac{2(d-1)}{d+2p}}
	\]
	with a sufficiently large implicit constant gives 
	\[
		\sup_{u \in \Sph^n}\left| \int_{\Sph^n} |\ip{u}{y}|^p\,d\nu(y) - \int_{\Sph^n} |\ip{u}{y}|^p
		\,d\mu(y) \right| \le \eps. \qedhere
	\]
\end{proof}

\section{Sparsification with Band Discrepancy Control}
\label{sec:sparsification-proof}

In this section, we prove Theorem~\ref{input:finite-band-sampling}. The proof
relies on two principal external results: the low-crossing matching theorem (see, e.g.~\cite[Theorem~5.17]{matousek-discrepancy}) and Rothvoss's partial-colouring theorem~\cite{rothvoss}.

\subsection{Crossing numbers}

Let $(X,\mathcal{R})$ be a set system. Its dual shatter function is
\[
	\pi^{*}_{\mathcal{R}}(m) = \max_{\substack{\mathcal{A} \subseteq \mathcal{R}\\ |\mathcal{A}| \le m}}
	\left| \left\{ \{R \in \mathcal{A}:x \in R\}:x \in X \right\} \right|.
\]
Equivalently, $\pi^{*}_{\mathcal{R}}(m)$ is the maximum number of membership
patterns induced on points of $X$ by $m$ ranges.
A matching on $X$ is a collection of pairwise disjoint two-element subsets of
$X$; its elements are called edges.
We say that a range $R\in\mathcal R$ crosses an edge $\{x,y\}$ if
$|R\cap\{x,y\}|=1$.

We use the following classical low-crossing result.

\begin{theorem}[Low-crossing matching {\cite[Theorem~5.17]{matousek-discrepancy}}]
	\label{thm:low-crossing-matching}
	Let $(X,\mathcal{R})$ be a finite set system with $|X| = r$, and suppose that,
	for some constants $C_0 \geq 1$ and $q > 1$,
	\[
		\pi^{*}_{\mathcal{R}}(m) \le C_0 m^q \qquad \forall m \ge 1 .
	\]
	Then there is a matching $\cE$ on $X$ of size $\lfloor r/2\rfloor$ such that
	every $R\in\mathcal R$ crosses at most $C_{q,C_0}r^{1-1/q}$ edges of $\cE$.
\end{theorem}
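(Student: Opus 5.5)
The plan is the classical iterative-reweighting argument of Welzl and Chazelle--Welzl. The edges of $\cE$ are built one at a time, and a multiplicative weight is kept on the ranges so that ranges which have already been crossed often become expensive to cross again. Since $X$ is finite, I may replace $\mathcal R$ by its set of distinct traces $\{R\cap X\}$. The number of such traces is bounded by the primal shatter function. Since the dual shatter function is polynomial, the primal one is polynomial too, by the standard dual-of-dual bound (via VC-dimension). Hence $|\mathcal R|\le r^{C_{q,C_0}}$, and crossings only depend on traces. Start with every range of weight $1$, so $W_0=|\mathcal R|\le r^{C}$.

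\textbf{Key step (short edge lemma).} Let $w$ be any weighting of $\mathcal R$ with total weight $W$, and let $Y\subseteq X$ with $|Y|=r'\ge 2$. I claim there exist distinct $x,y\in Y$ such that the total weight of ranges crossing $\{x,y\}$ is at most $C_{q,C_0}W\,r'^{-1/q}$. To see this, define the pseudometric $\operatorname{dist}(x,y)=w(\{R: R\text{ crosses }\{x,y\}\})/W$ on $X$. This is the normalized Hamming distance in the dual set system, where points are sets of ranges, under the probability measure $w/W$ on $\mathcal R$. The hypothesis $\pi^*_{\mathcal R}(m)\le C_0m^q$ is exactly a primal shatter bound for this dual system. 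A Haussler-type packing lemma for set systems with shatter function $O(m^q)$ then says that any $\delta$-separated set has size at most $C(1/\delta)^q$; this is the form in Matou\v{s}ek's book, and the paper's Corollary~\ref{cor:packing} is presumably of this type. Taking $\delta=(C/r')^{1/q}$ forces two points of $Y$ within distance $\delta$.

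I expect this lemma to be the main obstacle if one tries to prove it from scratch. The naive argument samples $m$ ranges and notes that two points in the same membership class are uncrossed. It only gives $\delta\lesssim \log r'/r'^{1/q}$, and that logarithm would survive summation and spoil the bound. So I will invoke the packing lemma, which is log-free, rather than the sampling or $\eps$-net argument.

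\textbf{Iteration and accounting.} For $i=0,1,\dots,\lfloor r/2\rfloor-1$, let $Y_i$ be the set of $r-2i$ unmatched points. Apply the lemma to $Y_i$ with the current weights $w_i$ to get an edge $e_{i+1}$, add it to $\cE$, and double the weight of every range crossing $e_{i+1}$. Then
\[
W_{i+1}\le W_i\bigl(1+C(r-2i)^{-1/q}\bigr),
\]
so that
\[
W_{\mathrm{final}}\le W_0\exp\Bigl(C\sum_{j\le r} j^{-1/q}\Bigr)\le r^{C}\exp\bigl(C' r^{1-1/q}\bigr),
\]
where $q>1$ is used to sum $\sum_{j\le r} j^{-1/q}\lesssim_q r^{1-1/q}$. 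A range crossing $\kappa$ edges has final weight $2^\kappa\le W_{\mathrm{final}}$, hence $\kappa\le C'' r^{1-1/q}+C\log_2 r\le C_{q,C_0}r^{1-1/q}$, again because $q>1$.
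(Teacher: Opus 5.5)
Your argument is correct. Note, though, that the paper does not prove this statement itself. It cites Matou\v{s}ek's Theorem~5.17 for even $r$ and only adds the odd case, deleting one point and leaving it unmatched.

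What you have written is a self-contained reconstruction of the proof behind that citation. It is the Welzl / Chazelle--Welzl reweighting scheme, with the short-edge step made logarithm-free by applying the packing lemma to the dual system. The individual steps check out:
\begin{itemize}
\item For the short-edge lemma you can use Lemma~\ref{lem:packing} directly, rather than Corollary~\ref{cor:packing}. Take the finite probability space $(\mathcal R, w/W)$ and the sets $\mathcal R_x=\{R\in\mathcal R: x\in R\}$; their primal shatter function is exactly $\pi^*_{\mathcal R}$. Coincident sets $\mathcal R_x$ and the case $(C/r')^{1/q}>1$ are trivial.
\item The bound $\log W_0=O_{q,C_0}(\log r)$ holds. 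The inequality $2^{d^*}\le C_0 (d^*)^q$ bounds the dual VC-dimension, hence the primal one, and Sauer--Shelah then gives the polynomial bound on $|\mathcal R|$.
\item Both $\sum_{j\le r}j^{-1/q}\lesssim_q r^{1-1/q}$ and the absorption of $\log r$ into $r^{1-1/q}$ use $q>1$, as you say.
\item Your iteration handles odd $r$ automatically, since each $Y_i$ with $i\le\lfloor r/2\rfloor-1$ still has at least two points.
\end{itemize}

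The paper's citation buys brevity. Your version makes the dependence of $C_{q,C_0}$ traceable, and it reuses the same packing lemma the paper already proves in Appendix~\ref{sec:packing} for its chaining step. It is also visibly constructive: each step scans $O(r^2)$ candidate pairs against $r^{O(1)}$ weighted ranges. That is exactly the property the paper invokes in Section~\ref{sec:algorithm} when it asserts that the matching can be found in deterministic polynomial time.
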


\begin{proof}
For even $r$, this is precisely the cited theorem. For odd $r$, remove one
element of $X$, apply the theorem to the remaining set, and leave the removed
element unmatched.
\end{proof}

\subsection{Band arrangements}

For $u \in \Sph^n$ and $t \in [0,1]$, recall that
\[
	\cB(u,t) = \{y \in \Sph^n:|\langle u,y\rangle| \le t\}.
\]
Let
\[
	\mathfrak{B}_n = \{\cB(u,t):u \in \Sph^n,\ t \in [0,1]\}.
\]

\begin{lemma}[Dual shatter bound for bands]
	\label{lem:spherical-band-dual-shatter} For every fixed $n \ge 1$, there is a
	constant $C_n > 0$ such that
	\[
		\pi^{*}_{\mathfrak{B}_n}(m) \le C_n m^n \qquad \forall m \ge 1.
	\]
\end{lemma}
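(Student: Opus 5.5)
Each band is cut out by the two hyperplanes $\{y : \ip{u}{y} = t\}$ and $\{y : \ip{u}{y} = -t\}$, so the natural approach is to count the cells of an arrangement of $2m$ affine hyperplanes in $\R^{n+1}$ and then use the fact that the points lie on the $n$-dimensional sphere to save one dimension.

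Fix $m$ bands $\cB(u_1,t_1),\dots,\cB(u_m,t_m)$. For a point $y\in\Sph^n$, its membership pattern is the set $\{j : |\ip{u_j}{y}|\le t_j\}$. This pattern is determined by the sign vector
\[
\bigl(\sgn(\ip{u_j}{y}-t_j),\ \sgn(\ip{u_j}{y}+t_j)\bigr)_{j=1}^m \in \{-1,0,1\}^{2m}.
\]
It therefore suffices to bound the number of distinct sign vectors realised by points of $\Sph^n$ with respect to the $2m$ affine functions $f_k(y)=\ip{v_k}{y}-s_k$.

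\textbf{Polynomial sign-pattern bound.} Restricted to the sphere, these are polynomials of degree $1$ on a real algebraic variety of dimension $n$. A Warren / Milnor--Thom type bound, in the form counting sign conditions including zeros on a variety (Basu--Pollack--Roy), gives at most $C_n (2m)^n$ realisable sign patterns for $2m$ polynomials of bounded degree on an $n$-dimensional variety. This already yields $\pi^*_{\mathfrak{B}_n}(m)\le C_n m^n$.

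\textbf{Elementary alternative.} One can also argue directly. Each nonempty cell of the sign-vector partition of $\Sph^n$ (including lower-dimensional cells where some $f_k=0$) is a face of the arrangement of the $2m$ spherical "small circles" $\{f_k=0\}\cap\Sph^n$. Apply the stereographic projection from a generic point of the sphere that lies on none of these sets. It maps each small sphere $\{f_k=0\}\cap\Sph^n$ to a sphere or hyperplane in $\R^n$, and it preserves the face structure. The number of faces of an arrangement of $N$ spheres in $\R^n$ is $O_n(N^n)$. One way to see this is to lift by $x\mapsto(x,|x|^2)$, which turns each sphere into a hyperplane section of a paraboloid in $\R^{n+1}$. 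The faces of the hyperplane arrangement in $\R^{n+1}$ number $O_n(N^{n+1})$, which is one power too many. Alternatively, count faces inductively: faces of dimension $k$ lie on intersections of $n-k$ spheres, and there are $O(N^{n-k})$ such intersections. Each intersection is a $k$-sphere cut by the remaining $N$ spheres into $O(N^k)$ faces by induction, for a total of $O_n(N^n)$.

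The main obstacle is obtaining exponent $n$ rather than $n+1$, that is, genuinely using the sphere, and handling degenerate configurations such as tangencies or coincident circles. The cleanest route is to cite the Basu--Pollack--Roy bound on the number of realisable sign conditions of $s$ polynomials of degree $\le D$ on a variety of dimension $n$, which is $O(sD)^n$. This bound counts zero-sign conditions as well, so it covers the closed bands with no genericity assumption, and it gives the claim directly with $s=2m$ and $D=1$.
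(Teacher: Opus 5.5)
Your argument is correct; its main route differs mildly from the paper's.

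\textbf{The paper's route.} The paper first applies stereographic projection. Away from the pole, the conditions $\pm\ip{u_j}{y}\le t_j$ become sign conditions on $2m$ quadratic polynomials in $n$ variables, after clearing the positive denominator $1+|x|^2$. The classical unconstrained sign-pattern bound in $\R^n$ (Warren/Milnor--Thom, as in Matou\v{s}ek's Section~6.2) then gives $O_n(m^n)$ patterns directly. The pole contributes at most one extra pattern.

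\textbf{Your route.} You keep the $2m$ linear functions on $\R^{n+1}$ and invoke the Basu--Pollack--Roy bound for sign conditions on a real variety of dimension $n$. This avoids the change of coordinates but needs the stronger variety version of the sign-pattern theorem. Two small inaccuracies are harmless. The degree parameter must be $2$, not $1$, to accommodate the quadric $|y|^2=1$. The bound has the form $\sum_{j\le n}\binom{s}{j}\,O(D)^{n+1}$ rather than $O(sD)^n$. Since $D$ is a fixed constant, neither affects the $O_n(m^n)$ conclusion.

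\textbf{Your elementary alternative.} This already contains the paper's reduction. Once you have projected stereographically, you do not need the inductive face count for sphere arrangements. As written, that count presumes general position, namely that $n-k$ spheres meet in a $k$-sphere, so it is not a complete argument for tangent or coincident configurations. The images of the small spheres are zero sets of quadratics in $n$ variables. The standard $\R^n$ sign-pattern theorem counts zero signs and needs no genericity, so it finishes the proof at once. The obstacle you identify for that route therefore disappears.
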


Lemma~\ref{lem:spherical-band-dual-shatter} follows from the standard bound for
polynomial sign patterns; see \cite[Section 6.2]{matousek-book}. Indeed, after
stereographic projection, the boundaries of the $m$ bands are represented by $2
m$ quadratic polynomials in $n$ variables, and hence determine $O_n(m^n)$
membership patterns.

\subsection{Packing Lemma}

For a set system $(X,\cR)$, its primal shatter function is
\[
	\pi_{\cR}(m) = \max_{\substack{A\subseteq X\\ |A|\leq m}} \left|\{R\cap A: R\in\cR\}\right|.
\]

We shall need the following probability-space version of the packing lemma, which is in the same spirit as the classical packing lemma for finite set systems. Its proof is deferred to Appendix~\ref{sec:packing}.

\begin{lemma}[Packing lemma for probability spaces]\label{lem:packing}
	Let $(\Omega,\cM,\mu)$ be a probability space and $\cR\subseteq \cM$ be a set system. Define the pseudometric on $\cR$ as
	\[
		\rho(R_1, R_2) = \mu(R_1 \triangle R_2).
	\]
	Suppose that there exist constants $C_0 > 0$ and $q>1$ such that
	\[
		\pi_{\cR}(m) \leq C_0 m^q, \qquad \forall m\geq 1.
	\]
	Then there exists a constant $C_{q, C_0} > 0$ such that for every $\delta\in (0,1]$, every $\delta$-separated family $\cP\subseteq \cR$ satisfies 
	\[
		|\cP| \leq C_{q, C_0} \delta^{-q}.
	\]
\end{lemma}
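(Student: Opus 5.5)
The plan is the probabilistic sampling argument behind the Haussler packing lemma, with the counting measure replaced by $\mu$. Fix a $\delta$-separated family $\cP=\{R_1,\dots,R_P\}\subseteq\cR$; we may take $\cP$ finite. Draw $m$ i.i.d.\ points $x_1,\dots,x_m$ from $\mu$, with $m = \lceil C q/\delta\rceil$ for a large absolute $C$ to be fixed. Let $A$ be the multiset $\{x_1,\dots,x_m\}$. The traces $R\cap A$, $R\in\cP$, take at most $\pi_{\cR}(m)\le C_0 m^q$ distinct values. We will compare this with the expected number of distinct traces, which should be of order $|\cP|$ times a constant, or at least relate $|\cP|$ to it after a refinement step.

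A crude version gives $\delta^{-q}\log$, not the sharp bound. We guess $q>1$ is stated to allow a direct argument, so the sharp Haussler bound is the target. For the sharp bound we will follow Haussler's proof (as in Matoušek's \emph{Geometric Discrepancy}, Lemma 5.14). Define the unit-distance graph on $\cP$ restricted to $A$: $R,R'\in\cP$ are adjacent if their traces on the sample differ in exactly one sample point. Then argue as follows.
\begin{enumerate}[label=(\roman*)]
\item For a set system of primal shatter function at most $C_0 m^q$ restricted to $m$ points, the number of edges of the unit-distance graph on distinct traces is at most $C'_{q,C_0}\,m\cdot(\#\text{traces})\cdot(\text{dim-like constant})$. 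Haussler bounds it by VC-dim times vertices. Here we would use that the shatter bound implies VC-dimension $O_{q,C_0}(1)$, so the edge density is $O(1)$ per vertex.
\item Run the conditional-probability argument. Sample $m-1$ points, then one more. For each sampled configuration, count the pairs of traces differing only at the last point. Each pair $R,R'\in\cP$ with $\mu(R\triangle R')\ge\delta$ collides on the first $m-1$ points with probability at most $(1-\delta)^{m-1}$.
\item Combine the expected number of distinct traces with the edge bound. This yields $|\cP|$-ish $\le$ (expected number of classes)$\cdot(1 + O(d_{VC}/(m\delta)))$. In turn this gives $|\cP|\le 2\,\mathbb{E}[\#\text{traces}]\le 2C_0m^q\lesssim \delta^{-q}$.
\end{enumerate}

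The main obstacle is step (iii). We must transfer Haussler's argument, which is written for the uniform measure on a finite ground set, to i.i.d.\ sampling from a general $\mu$ (with multiplicities). The cleanest route is to take the sample to be $m$ i.i.d.\ draws, which reduces everything to the finite set system $\{R\cap A\}$ on the multiset $A$. Pseudometric degeneracy is harmless, since distinct members of a $\delta$-separated family are at distance at least $\delta>0$. Measurability of events like $x\in R$ is fine since $\cR\subseteq\cM$.

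If the sharp route gets stuck, the fallback is a simpler union bound. Take $m\asymp \delta^{-1}\log|\cP|$. Then with positive probability all pairs are separated by $A$, so $|\cP|\le C_0 m^q$, giving $|\cP|\lesssim(\delta^{-1}\log|\cP|)^q$, i.e.\ $\delta^{-q}\log^q(1/\delta)$. That is weaker, so the real plan is Haussler's argument.
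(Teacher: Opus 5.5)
Your main route, rerunning Haussler's argument with i.i.d.\ samples from $\mu$, can be completed, but the proposal does not contain the mechanism that makes it work.

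The skeleton of (ii) is right: sample $m-1$ points, then one more, and look at what the last point splits. The estimate you attach to it, however, is the ingredient of your union-bound fallback. With $m\asymp 1/\delta$ the collision probability $(1-\delta)^{m-1}$ is a constant, and it plays no role in Haussler's argument. What is needed is the reverse estimate:
\begin{enumerate}[label=(\alph*)]
\item Condition on $x_1,\dots,x_{m-1}$ and group $\cP$ into classes by their traces.
\item Two members $R,R'$ of the same class are split by the independent point $x_m$ with probability $\mu(R\triangle R')\ge\delta$. So a class of size $b$ has expected minority side at least $(b-1)\delta/2$.
\item Summing over classes, the direction-$m$ edges of the unit-distance graph on traces carry expected weight at least $\frac{\delta}{2}(|\cP|-\E N_{m-1})$, where $N_{m-1}\le C_0m^q$ counts traces on $m-1$ points.
\end{enumerate}

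The weight here is $\min(w(t),w(t'))$, with $w(t)$ the number of members of $\cP$ having trace $t$. So the upper bound must also be weighted, and the unweighted bound ``edges $\le d_0\cdot\#$vertices'' of your step (i) does not give it directly. An edge bound with an extra factor $m$, as first written there, would be useless. You get the weighted bound because the unweighted one holds for every induced subgraph. Hence the graph is $2d_0$-degenerate and $\sum_{\text{edges}}\min(w(t),w(t'))\le 2d_0|\cP|$.

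Since every coordinate can play the role of the last one, summing over the $m$ directions gives $\frac{m\delta}{2}(|\cP|-\E N_{m-1})\le 2d_0|\cP|$. Then $m=\lceil 8d_0/\delta\rceil$ yields $|\cP|\le 2C_0m^q$. Note that $m$ must scale with the VC-dimension $d_0$, which is controlled only through $2^{d_0}\le C_0d_0^q$ and so depends on $C_0$. Your choice $m=\lceil Cq/\delta\rceil$ with an absolute $C$ is therefore not justified.

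The paper sidesteps all of this, and its proof is one idea away from your fallback. Rather than asking the sample merely to distinguish every pair, which costs the logarithm, it uses Chernoff to ask that every pair remain $\delta M/2$-separated in the sample. The per-pair failure probability is $e^{-c\delta M}$, so a union bound over $|\cP|^2$ pairs succeeds with $M=\lceil C|\cP|/\delta\rceil$. The size of $M$ is harmless: the discrete packing lemma (Matou\v{s}ek's Lemma~5.14), applied as a black box to the sampled system on $[M]$ with $\delta'=\lfloor\delta M/2\rfloor$, bounds $|\cP|$ by $C_{q,C_0}(M/\delta')^q=O(\delta^{-q})$. Your completed argument would be self-contained; it is essentially Haussler's theorem for arbitrary distributions. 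The paper's is much shorter because it outsources the sharp combinatorics to the finite lemma.
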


\begin{corollary}\label{cor:packing}
	Let $(\Omega,\cM,\mu)$ be a probability space and let $S$ be a finite set with $|S|\geq 2$. Let $\cF$ be a family of measurable functions $f:\Omega\to S$. Define
	\[
		\rho(f,g) = \mu(\{\omega\in\Omega: f(\omega)\neq g(\omega)\}).
	\]
	Suppose there exist constants $C_0 > 0$ and $q>1$ such that, for every $m\geq 1$,
	\[
		\sup_{\omega_1,\dots,\omega_m\in \Omega}|\{ (f(\omega_1),\dots,f(\omega_m)): f\in \cF \}| \leq C_0 m^q,
	\]
	then every $\delta$-separated family $\cP\subseteq \cF$ satisfies 
	\[
		|\cP| \leq C_{q, C_0} \left(\frac{|S|}{\delta}\right)^{q}.
	\]
\end{corollary}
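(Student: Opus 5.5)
We reduce Corollary~\ref{cor:packing} to Lemma~\ref{lem:packing} by encoding each function $f\in\cF$ through its level sets. For each $s\in S$, let $R_{f,s}=f^{-1}(s)\in\cM$, and put
\[
	\cR=\{R_{f,s}: f\in\cF,\ s\in S\}.
\]
The first step is to verify the primal shatter hypothesis for $\cR$. Given points $\omega_1,\dots,\omega_m$, the trace $R_{f,s}\cap\{\omega_1,\dots,\omega_m\}$ is determined by the pair $\bigl((f(\omega_1),\dots,f(\omega_m)),\,s\bigr)$. Hence
\[
	\pi_\cR(m)\le |S|\,C_0 m^q,
\]
and Lemma~\ref{lem:packing} applies with constant $C_0|S|$.

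The next step is to pass from separation of functions to separation of sets. The key identity is
\[
	\{f\neq g\}=\bigcup_{s\in S}\bigl(R_{f,s}\cap R_{g,s}^c\bigr),
\]
which gives
\[
	\rho(f,g)\le \sum_{s\in S}\mu(R_{f,s}\setminus R_{g,s})\le \sum_{s\in S}\mu(R_{f,s}\triangle R_{g,s}).
\]
So if $\rho(f,g)>\delta$, there is some $s$ with $\mu(R_{f,s}\triangle R_{g,s})>\delta/|S|$.

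The main obstacle is that the witnessing colour $s$ depends on the pair $(f,g)$, so we cannot simply fix one level set and obtain a separated family. We resolve this by mapping each $f\in\cP$ to the vector $(R_{f,s})_{s\in S}$, where each entry lies in a $(\delta/|S|)$-separated set. Fix a maximal $(\delta/(2|S|))$-separated subfamily $\cN\subseteq\cR$; it is a $(\delta/(2|S|))$-net of $\cR$. By Lemma~\ref{lem:packing},
\[
	|\cN|\le C(2|S|/\delta)^q.
\]
Assign to $f$ the tuple $\bigl(\pi(R_{f,s})\bigr)_{s}$, where $\pi$ maps each set to a net point within distance $\delta/(2|S|)$. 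Two $\delta$-separated functions cannot receive the same tuple: if they did, then for every $s$, the triangle inequality would give $\mu(R_{f,s}\triangle R_{g,s})\le \delta/|S|$, hence $\rho(f,g)\le\delta$.

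This yields $|\cP|\le |\cN|^{|S|}$, whose exponent $q|S|$ is worse than the claimed exponent $q$. To obtain the stated bound we redo this step differently. Instead of level sets of individual colours, we view $\cF$ as a single set system on the product space $\Omega\times S$ with the measure $\mu\otimes(\text{uniform on }S)$, using the graphs
\[
	G_f=\{(\omega,s): s=f(\omega)\}.
\]
Then
\[
	(\mu\otimes\mathrm{unif})(G_f\triangle G_g)=\frac{2}{|S|}\,\rho(f,g),
\]
so $\delta$-separated functions give $(2\delta/|S|)$-separated graphs. For the primal shatter function, a trace of $G_f$ on $m$ points $(\omega_i,s_i)$ is determined by $(f(\omega_1),\dots,f(\omega_m))$, so $\pi(m)\le C_0m^q$. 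Lemma~\ref{lem:packing} then gives
\[
	|\cP|\le C_{q,C_0}(|S|/(2\delta))^q,
\]
as claimed. The case $2\delta/|S|>1$ is trivial, since the graphs are then separated by more than $1$ while the product measure is a probability measure, so at most one function exists. The only care needed is checking measurability of $G_f$ in the product $\sigma$-algebra, which holds because $S$ is finite.
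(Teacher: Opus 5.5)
Your final argument is correct and is the paper's proof: embed each $f$ as its graph $G_f\subseteq\Omega\times S$ under $\mu\otimes(\text{uniform on }S)$, use $\tilde\mu(G_f\triangle G_g)=\frac{2}{|S|}\rho(f,g)$ and the shatter bound inherited from $\cF$, then apply Lemma~\ref{lem:packing}. The level-set detour can be dropped; your remarks on the trivial large-$\delta$ case and on measurability of $G_f$ are small additions the paper leaves implicit.
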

\begin{proof}
	Endow $\tilde{\Omega} = \Omega\times S$ with probability measure $\tilde{\mu} = \mu \otimes \lambda_S$, where $\lambda_S$ denotes the uniform measure on $S$.
	Consider the graph $\Gamma_f = \{(\omega,s): s = f(\omega)\}$. Let $\cR = \{\Gamma_f: f\in \cF\}$ be the set system in $\tilde{\Omega}$. It is not difficult to see that
	\[
		\pi_{\cR}(m) \leq \sup_{\omega_1,\dots,\omega_m\in \Omega}|\{ (f(\omega_1),\dots,f(\omega_m)): f\in \cF \}|	\leq C_0 m^q.
	\]
	Next we consider the metric on $\tilde{\Omega}$. For $f,g\in\cF$ and $\omega\in\Omega$,
	\[
		|\{s\in S: (\omega,s)\in \Gamma_f\triangle\Gamma_g \}| 
		= \begin{cases}
			0, & f(\omega) = g(\omega) \\
			2, & f(\omega) \neq g(\omega)
		  \end{cases}
	\]
	Thus
	\[
		\tilde{\mu}(\Gamma_f\triangle \Gamma_g) = \frac{2}{|S|}\rho(f,g).	
	\]
	If $\cP$ is a $\delta$-separated family of $\cF$, then $\tilde{\cP} = \{\Gamma_f: f\in\cP\}$ is a $(2\delta/|S|)$-separated family of $\cR$ and
	\[
		|\cP| = |\tilde{\cP}| \leq C_{q,C_0}\left(\frac{|S|}{2\delta}\right)^q.
	\]
\end{proof}

\subsection{Chaining argument}

We need \v{S}id\'ak's lemma~\cite{sidak67}, which states that if
$Y_1,\dots,Y_m$ are centred jointly Gaussian random variables and
$a_1,\dots,a_m\geq 0$, then
\[
	\Pr\left\{|Y_i|\leq a_i\text{ for all }i\in[m]\right\} \geq \prod_{i=1}^m \Pr\{|Y_i|\leq a_i\}.
\]

\begin{lemma}\label{lem:chaining}
	Let $F$ be a finite-dimensional Euclidean space, let $q\geq 1$ and $C_0\geq 1$,
	and suppose that $0<R\leq L$. Let $X\subseteq B(0,R)$ be a bounded set whose
	covering numbers satisfy
	\[
		N(X,\norm{\cdot}_2,\eps)
		\leq C_0\left(\frac{L}{\eps}\right)^q,
		\qquad 0<\eps\leq L.
	\]
	Then, for every $s\geq C_0$, there exists a symmetric convex set
	$K_s\subseteq F$ such that
	\[
		\gamma_F(K_s)\geq e^{-s}
	\]
	and
	\[
		\sup_{v\in X}|\ip{v}{h}|
		\leq C_q\max\left\{R,L\left(\frac{C_0}{s}\right)^{1/q}\right\},
		\qquad \forall h\in K_s.
	\]
	Here $\gamma_F$ denotes the standard Gaussian measure on $F$.
\end{lemma}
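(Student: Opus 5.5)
We will prove Lemma~\ref{lem:chaining} by a dyadic chaining construction. For each scale we build a small set of linear constraints, one per chain link. We then take $K_s$ to be the intersection of the corresponding symmetric slabs, and use \v{S}id\'ak's lemma to bound its Gaussian measure from below.

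\textbf{Nets.} Set $\eps_k = L2^{-k}$ for $k\ge k_0$, where $k_0$ is the first index with $\eps_{k_0}\le \eps_* := L(C_0/s)^{1/q}$. Note that $\eps_*\le L$ since $s\ge C_0$. For each $k\ge k_0$ choose an $\eps_k$-net $\cN_k\subseteq X$ with $|\cN_k|\le C_0 2^{kq}$; taking nets inside $X$ costs only a factor $2$ in $\eps$. Let $\pi_k:X\to\cN_k$ be a nearest-point map. Any $v\in X$ has the telescoping decomposition
\[
v=\pi_{k_0}v+\sum_{k>k_0}(\pi_k v-\pi_{k-1}v).
\]
The link vectors $w=\pi_kv-\pi_{k-1}v$ satisfy $\|w\|_2\le 3\eps_{k-1}$. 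The set of links at level $k$ has cardinality at most $|\cN_k||\cN_{k-1}|\le C_0^2 2^{2kq}$.

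\textbf{Constraints and Gaussian measure.} Define $K_s$ by the constraints
\[
|\ip{w}{h}|\le a_k\quad\text{for all links $w$ at level $k$},\qquad |\ip{x}{h}|\le a_{k_0}'\quad\text{for all } x\in\cN_{k_0}.
\]
Each constraint is a symmetric slab, so $K_s$ is symmetric and convex. By \v{S}id\'ak's lemma,
\[
\gamma_F(K_s)\ge\prod\Pr\{|g|\le a/\sigma\},
\]
where $g$ is standard normal and $\sigma=\|w\|_2$ (resp.\ $\|x\|_2\le R$) is the standard deviation of the corresponding linear functional. We use $\log\Pr\{|g|\le \tau\}\ge -2e^{-\tau^2/2}$ for $\tau\ge 2$ and $\ge \log(\tau/3)$ for small $\tau$.

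For the base net we take $a'_{k_0}=C R$. The cost is then $|\cN_{k_0}|e^{-C^2/2}\lesssim C_0 2^{k_0 q}\le C\cdot s$, since $2^{k_0q}\asymp s/C_0$ by the choice of $k_0$.

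For level $k$ we take
\[
a_k = 3\eps_{k-1}\tau_k,\qquad \tau_k^2 = 2\log\bigl(C_0^2 2^{2kq}\bigr) - 2\log s + 2(k-k_0)+C .
\]
The level-$k$ cost is then at most $C_0^22^{2kq}e^{-\tau_k^2/2}\lesssim s\,e^{-(k-k_0)}$, so the total cost is $O(s)$. Replacing $s$ by $s/C'$ at the start makes the total cost at most $s$, giving $\gamma_F(K_s)\ge e^{-s}$.

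\textbf{Bounding the supremum.} The main obstacle is to check that $\sum_k a_k$ is $\lesssim_q L(C_0/s)^{1/q}$, with no logarithmic factor. At $k=k_0$ we have $2^{k_0q}\asymp s/C_0$, so
\[
\tau_k^2\lesssim q(k-k_0)+\log C_0 + O(1).
\]
The stray $\log C_0$ must be handled carefully. One option is to count links only within distance $3\eps_{k-1}$ of each point. Alternatively, rebalance the per-level budget as $s2^{-(k-k_0)}$ rather than one net's count: using $|\cN_k|\lesssim (s/C_0)2^{(k-k_0)q}$, we get $\tau_k^2\lesssim q(k-k_0)+O(1)$ without the extra $C_0$ whenever $s\ge C_0$.

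Then
\[
\sum_{k>k_0}\eps_{k-1}\tau_k\lesssim_q \eps_{k_0}\sum_j 2^{-j}\sqrt{1+j}\lesssim_q L(C_0/s)^{1/q}.
\]
For $h\in K_s$ and $v\in X$, the telescoping sum (which converges because $\pi_k v\to v$) gives
\[
|\ip{v}{h}|\le CR+\sum_k a_k\le C_q\max\{R,\,L(C_0/s)^{1/q}\}.
\]

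If the $\log C_0$ term cannot be removed this way, the fallback is to absorb it by choosing $k_0$ so that $2^{k_0q}$ is a small multiple of $s/C_0$. This is the delicate bookkeeping step of the proof.
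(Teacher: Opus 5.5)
Your proposal has a genuine gap. It is not the $\log C_0$ issue you flag: the stray logarithm is $\log s$, and it comes from indexing the level-$k$ constraints by \emph{pairs} $(\pi_kv,\pi_{k-1}v)$. Since $k_0$ is the first index with $2^{k_0}\ge (s/C_0)^{1/q}$, you have $C_02^{k_0q}\ge s$. So your count satisfies $C_0^22^{2kq}/s\ge s\,2^{2(k-k_0)q}$, and your own choice of $\tau_k$ gives
\[
\tau_k^2\;\ge\;2\log s+(4q\log 2+2)(k-k_0)+C .
\]
Hence already $a_{k_0+1}\ge 3\eps_{k_0}\sqrt{2\log s}\gtrsim L(C_0/s)^{1/q}\sqrt{\log s}$. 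This is exactly the logarithmic loss the lemma exists to remove: through Lemma~\ref{lem:integrated-band-body} it is applied with $s=c_0k$ in Lemma~\ref{lem:weighted-fractional-band-colouring}, so you would reintroduce a $\sqrt{\log k}$ factor.

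This is not an artefact of your particular $\tau_k$. As long as you budget for $|\cN_k||\cN_{k-1}|\gtrsim s^2$ slabs at the first level, while \v{S}id\'ak's product allows a total cost of only $s$, the slabs must have width $\gtrsim\sqrt{\log s}$ standard deviations. Your proposed repairs do not change this:
\begin{itemize}
\item The ``rebalanced'' version plugs a single-net count into what is still a product count. In any case $|\cN_k|\le C_02^{kq}\lesssim_q s\,2^{(k-k_0)q}$, not $(s/C_0)2^{(k-k_0)q}$.
\item Restricting to pairs at distance $\le 3\eps_{k-1}$ would need a local, doubling-type covering bound, but the hypothesis controls only global covering numbers.
\item Shrinking $2^{k_0q}$ to $c\,s/C_0$ leaves the product of order $c^2s^2$. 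Shrinking it until the product is $O(s)$ loses a factor of order $s^{1/(2q)}$ in the final bound.
\end{itemize}

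The missing idea is to chain along a \emph{tree}. Choose parent maps $\pi_k:\cN_k\to\cN_{k-1}$ on the nets themselves, with nets inside $X$ so that $\|x-\pi_k(x)\|_2\le\eps_{k-1}$. Impose one slab per point of $\cN_k$, namely $|\ip{x-\pi_k(x)}{h}|\le A_q\sqrt{k-k_0}\,\|x-\pi_k(x)\|_2$. There are now only $|\cN_k|\lesssim_q s\,2^{(k-k_0)q}$ slabs at level $k$, each failing with probability at most $2e^{-A_q^2(k-k_0)/2}$. The level-$k$ cost is therefore $\lesssim_q s\,(2^qe^{-A_q^2/2})^{k-k_0}$, which sums to at most $s/2$ once $A_q$ is large in terms of $q$ alone; the thresholds no longer depend on $s$.

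The chain for $v\in X$ must then be built top-down:
\begin{itemize}
\item take $v_J\in\cN_J$ with $\|v-v_J\|_2\le\eps_J$, and set $v_{k-1}=\pi_k(v_k)$;
\item bound $|\ip{v}{h}|\le\eps_J\|h\|_2+|\ip{v_{k_0}}{h}|+\sum_{k_0<k\le J}A_q\sqrt{k-k_0}\,\|v_k-v_{k-1}\|_2$;
\item let $J\to\infty$ for fixed $h$.
\end{itemize}
Because the chain depends on $J$, this truncation replaces your infinite telescoping series of nearest-point projections. Your base slabs of width $CR$ on $\cN_{k_0}$ can be kept as they are. This is precisely the paper's proof. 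It uses scales $2^{-j/2}\widetilde R$ with $\widetilde R=\max\{R,L(C_0/s)^{1/q}\}$, root $\cN_0=\{0\}$ (legitimate since $X\subseteq B(0,\widetilde R)$), and nets of size at most $M2^{qj/2}$ with $M=C_0(L/\widetilde R)^q\le s$.
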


\begin{proof}[Proof of Lemma~\ref{lem:chaining}]
	Set
	\[
		\widetilde{R} = \max \left\{R, L\left(\frac{C_0}{s}\right)^{1/q} \right\}
		\qquad\text{and}\qquad
		M = C_0\left(\frac{L}{\widetilde{R}}\right)^q.
	\]
	Since $s\geq C_0$, we have $R\leq \widetilde{R}\leq L$ and $M\leq s$.
	For each $j\geq1$, choose a $(2^{-j/2}\widetilde{R})$-net $\cN_j$ of
	$X$ such that
	\[
		|\cN_j| \leq M2^{qj/2},
	\]
	and let $\cN_0  =\{0\}$. For each $j\geq 1$, choose
	$\pi_j: \cN_j \to \cN_{j-1}$ such that
	\[
		\norm{x-\pi_j(x)}_2\leq 2^{-(j-1)/2} \widetilde{R}.
	\]
	For $j=1$, this follows from $X\subseteq B(0,R)\subseteq B(0,\widetilde{R})$.

	Choose $A_q > 0$ sufficiently large that
	\[
		c_q := 2^{q/2}e^{-A_q^2/2} \leq \frac15.
	\]
	Since $q \geq 1$, this also gives $2e^{-A_q^2/2} < 1/2$ and
	$4c_q/(1-c_q)\leq 1$.
	Define
	\[
		K_s = \bigcap_{j\geq 1} \bigcap_{x\in\cN_j}	
				\left\{ h\in F: |\ip{x-\pi_j(x)}{h}| \leq A_q \sqrt{j} \norm{x-\pi_j(x)}_2 \right\},
	\]
	and let $K_{s,J}$ denote the intersection restricted to $1\leq j\leq J$.
	If $g_F$ is a standard Gaussian vector in $F$, then \v{S}id\'ak's lemma gives
	\[
		\gamma_F(K_{s,J})
		\geq \prod_{j=1}^J\left(1-2e^{-A_q^2j/2}\right)^{|\cN_j|}
		\geq \exp\left(-4M\sum_{j=1}^Jc_q^j\right).
	\]
	Letting $J\to\infty$,
	\[
		\gamma_F(K_s)\geq\exp\left(-4M\frac{c_q}{1-c_q}\right)
		\geq e^{-M}\geq e^{-s}.
	\]

	Fix $v\in X$ and choose $v_J \in \cN_J$ such that
	$\norm{v-v_J}_2 \leq 2^{-J/2}\widetilde{R}$. For $j\leq J$, let
	$v_{j-1}=\pi_j(v_j)$. For $h\in K_s$,
	\begin{equation}\label{eqn:inner-product-bound_chaining}
		|\ip{v}{h}| 
		\leq 2^{-J/2}\widetilde{R}\norm{h}_2 + A_q\widetilde{R}\sum_{j=1}^J\sqrt{j}\,2^{-(j-1)/2}
		\leq 2^{-J/2}\widetilde{R}\norm{h}_2 + C A_q \widetilde{R}.
	\end{equation}
	Letting $J\to\infty$ completes the proof.
\end{proof}

\subsection{Fractional colouring under exactness constraints}
We need the following partial-colouring theorem by Rothvoss~\cite{rothvoss}.

\begin{theorem}[Subspace convex set partial colouring, {\cite[Lemma 9]{rothvoss}}]
	\label{thm:subspace-convex-set-pc} There exist absolute constants $c_0,\kappa
	_0 > 0$ with the following property. Let $F \subseteq \mathbb{R}^k$ be a
	linear subspace with
	\[
		\operatorname{codim}(F) \le c_0 k,
	\]
	and $K \subseteq F$ be a centrally symmetric convex set such that
	\[
		\gamma_F(K) \ge \exp(-c_0 k),
	\]
	where $\gamma_F$ denotes the standard Gaussian measure on $F$. Then for every
	$z^0 \in (-1,1)^k$, there exists
	\[
		z \in (z^0 + K)\cap[-1,1]^k
	\]
	such that at least $\kappa_0 k$ coordinates of $z$ belong to $\{-1,+1\}$.
\end{theorem}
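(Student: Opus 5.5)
The plan is to reprove Rothvoss's result by his ``Gaussian projection'' argument, carried out inside the affine subspace $z^0+F$ rather than in all of $\R^k$.

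\textbf{The random point.} Let $m=\dim F\ge (1-c_0)k$ and $P=[-1,1]^k$. Write $Q=(z^0+K)\cap P$, which is a convex subset of the affine space $z^0+F$. Sample $y=z^0+g$, where $g$ is a standard Gaussian vector in $F$. Let $x^*$ be the Euclidean projection of $y$ onto $Q$. It exists and is unique because $Q$ is closed and convex; by approximation we may assume $K$ is closed, and $Q\ni z^0$ is nonempty. The claim is that, with positive probability, at least $\kappa_0k$ coordinates of $x^*$ lie in $\{-1,1\}$. Then $z=x^*$ works.

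\textbf{Lower bound on the distance to $Q$.} For $i\in[k]$, let $S_i=\{x:|x_i|\le 1\}$ be the $i$-th strip.
\begin{enumerate}[label=(\roman*)]
\item Restricted to $z^0+F$, each strip is a symmetric slab shifted by $z^0$. Its centred Gaussian width is $\sigma_i=\|P_Fe_i\|_2\le 1$.
\item Since $\sum_i\sigma_i^2=m\ge(1-c_0)k$, at least $k/2$ indices satisfy $\sigma_i\ge 1/2$. For these indices the slab has width $O(1)$ relative to a Gaussian of variance $\Omega(1)$, so it has Gaussian measure at most some absolute $\theta<1$, whatever the shift.
\item A standard concentration argument (or a direct product-type bound using the near-orthogonality of most $P_Fe_i$) then gives $\Pr\{\mathrm{dist}(y,P\cap(z^0+F))\le \alpha\sqrt k\}\le e^{-\Omega(k)}$ for some small absolute $\alpha>0$.
\item Since $Q\subseteq P\cap(z^0+F)$, we conclude that w.h.p.\ $\|y-x^*\|_2\ge\alpha\sqrt k$.
\end{enumerate}

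\textbf{Upper bound on the distance when few coordinates are tight.} Let $I$ be the set of tight coordinates of $x^*$, and suppose $|I|<\kappa_0 k$.
\begin{enumerate}[label=(\roman*)]
\item \emph{Reduction to the tight constraints.} By convexity and the KKT conditions, $x^*$ is also the projection of $y$ onto $Q_I=(z^0+K)\cap\bigcap_{i\in I}S_i$. Indeed, the non-tight constraints are inactive, and a local minimiser of a convex problem is global.
\item \emph{Measure of $Q_I$.} Since $z^0\in(-1,1)^k$, each shifted strip $S_i-z^0$ contains the symmetric slab $\{|x_i|\le 1-|z^0_i|\}$. This can be thin, so first handle coordinates with $|z^0_i|$ close to $1$. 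Rothvoss avoids this issue by working with $\{|x_i|\le1\}$ in shifted form. I will instead use the Gaussian correlation inequality, or \v{S}id\'ak's lemma together with a symmetrisation, to get
\[
\gamma_F\bigl(Q_I-z^0\bigr)\ge \gamma_F(K)\prod_{i\in I}\gamma_F(\text{slab}_i)\ge e^{-c_0k}\,e^{-O(|I|\log(1/\kappa_0))}
\]
after restricting to strips of non-negligible width.
\item \emph{Isoperimetry.} Gaussian isoperimetry (Borell--Sudakov--Tsirelson) says that a convex set of measure $e^{-\delta k}$ is within distance $O(\sqrt{\delta k})$ of a Gaussian point w.h.p. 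The failure probability is $e^{-\Omega(k)}$, and this is small enough to survive a union bound over all $\binom{k}{\kappa_0k}\le e^{H(\kappa_0)k}$ choices of $I$.
\item Choosing $c_0$ and $\kappa_0$ small makes $O(\sqrt{(c_0+\kappa_0\log(1/\kappa_0)+H(\kappa_0))k})<\alpha\sqrt k$. This contradicts the lower bound above, so $|I|\ge\kappa_0k$ with positive probability.
\end{enumerate}

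\textbf{Main obstacle.} I expect the measure lower bound for $Q_I$ to be the hardest step, for two reasons. First, the strips are not centred, because of the shift by $z^0$. Second, the strips are viewed inside $F$, so their effective widths $\sigma_i$ may degenerate. To handle this I would first try Rothvoss's trick: for $z^0$ near the boundary the slab is narrow, but then the corresponding coordinate is already almost tight, and these coordinates can be accounted for separately. As a fallback, one can reduce to the full-space theorem by applying Rothvoss's Theorem to $K'=K+\bigl(F^\perp\cap \lambda B\bigr)$ and then correcting with a projection. However, this fallback does not preserve membership in $F$, which is why I would keep the argument inside $F$ throughout.
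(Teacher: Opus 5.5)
\textbf{Verdict: there is a genuine gap, at step (ii) of the upper bound.}

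The paper gives no proof of this statement; it cites it from Rothvoss. Your outline runs his Gaussian-projection argument inside $z^0+F$. The distance lower bound, the reduction of $x^*$ to the projection onto $Q_I$, and the union-bound bookkeeping are all fine.

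\textbf{Step (ii) fails.} This is the step you flag yourself. The estimate $\gamma_F(Q_I-z^0)\ge e^{-c_0k}e^{-O(|I|\log(1/\kappa_0))}$ is false for general $z^0\in(-1,1)^k$. So no version of \v{S}id\'ak, the Gaussian correlation inequality, or a symmetrisation can deliver it. Here is a counterexample:
\begin{itemize}
\item Take $F=\R^k$ and $K=\{h:|\sum_{i\in I}h_i|\le\sqrt{|I|}\}$, so $\gamma_F(K)\approx0.68$.
\item Take $z^0_i=1-\epsilon'$ for $i\in I$, with $\epsilon'\le|I|^{-1/2}$.
\item Every $h\in Q_I-z^0$ has $v_i:=\epsilon'-h_i\ge0$ and $\sum_{i\in I}v_i\le2\sqrt{|I|}$.
\item Hence $\gamma_F(Q_I-z^0)\le(2\pi)^{-|I|/2}(2\sqrt{|I|})^{|I|}/|I|!\le e^{-\frac12|I|\log|I|+O(|I|)}$.
\end{itemize}
When $|I|\asymp\kappa_0k$ this is $e^{-\Omega(k\log k)}$, and isoperimetry then says nothing useful.

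Your proposed repairs do not work either. Discarding the thin tight constraints enlarges $Q_I$, which can only decrease $d(y,\cdot)$; you need an upper bound on $d(y,Q_I)=\norm{y-x^*}_2$. Near-boundary coordinates also cannot be ``accounted for separately'' by rounding them first. The output must lie in $z^0+K$ for the given $z^0$, and almost-tight coordinates do not count.

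\textbf{The missing idea.} Bound the distance to $Q_I$ through a coordinate section, not through the measure of a strip intersection.
\begin{itemize}
\item Let $G_I=\{h\in F:h_i=0\ \forall i\in I\}$. Since $|z^0_i|<1$, the origin lies in every shifted strip $S_i-z^0$. Hence $K\cap G_I\subseteq Q_I-z^0$, whatever the strip widths.
\item For symmetric convex $K\subseteq F$, the central section is the heaviest parallel section. Put $K_w=\{x\in G_I:x+w\in K\}$ for $w\in F\cap G_I^\perp$. Then $K_{-w}=-K_w$ and $\frac12K_w+\frac12K_{-w}\subseteq K_0$, so log-concavity gives $\gamma_{G_I}(K_0)\ge\gamma_{G_I}(K_w)$.
\item Integrating over $w$ yields $\gamma_{G_I}(K\cap G_I)\ge\gamma_F(K)\ge e^{-c_0k}$.
\end{itemize}
Since $\dim(F\cap G_I^\perp)\le|I|$,
\[
\norm{y-x^*}_2=d(g,Q_I-z^0)\le d(g,K\cap G_I)\le\norm{P_{F\cap G_I^\perp}g}_2+d(P_{G_I}g,K\cap G_I).
\]
Here $P_{G_I}g$ is a standard Gaussian on $G_I$. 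The first term is at most $\sqrt{|I|}+t$ (Lipschitz concentration), and the second at most $\sqrt{2c_0k}+t$ (Gaussian isoperimetry), each except with probability $e^{-t^2/2}$.

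Choose $t\asymp\sqrt{H(\kappa_0)k}$. This survives the union bound over all $I$ with $|I|<\kappa_0k$. Once $c_0$ and $\kappa_0$ are small, it contradicts your lower bound $\alpha\sqrt k$, uniformly in $z^0$ and without any reference to strip widths.
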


The following two lemmata address the main technical difficulties in the proof.
\begin{lemma}
	\label{lem:integrated-band-body}
	Let $n\geq 1$, $q>1$, $C_0>0$, and $\tau>0$. Let $\cE$ be a matching of size $k\geq 1$ on an
	index set, and let $y_i\in \Sph^n$ be given for each index appearing in
	$\cE$. Let $0\leq\Lambda\leq k$ and suppose that every equatorial band crosses at most $\Lambda$ edges of
	$\cE$. For $e=\{a,b\}\in\cE$ and $u\in\Sph^n$, set
	\[
		a_u(e,t)=\mathbf{1}_{\cB(u,t)}(y_a)-\mathbf{1}_{\cB(u,t)}(y_b).
	\]
	Let $\xi$ be a finite signed Borel measure on $[0,1]$, and define
	$A_u\in\R^k$ by
	\[
		A_u(e)=\int_0^1 a_u(e,t)\,d\xi(t).
	\]
	Suppose that the family of functions $(e,t)\mapsto a_u(e,t)$, indexed by
	$u\in\Sph^n$, satisfies
	\[
		\sup_{(e_1,t_1),\dots,(e_\ell,t_\ell)}
		\left|\left\{\bigl(a_u(e_1,t_1),\dots,a_u(e_\ell,t_\ell)\bigr):u\in\Sph^n\right\}\right|
		\leq C_0\ell^q, \qquad \ell\geq 1.
	\]
	Let $D$ be a diagonal matrix with $0\le D_{ee}\le \tau$, and let
	$F\subseteq\R^k$ be a linear subspace. Set
	\[
		R = \tau\sqrt{\Lambda}\norm{\xi}_{\mathrm{TV}}.
	\]
	There is a constant $C_{q,C_0}\geq1$, depending only on $q$ and $C_0$, such
	that, for every $s\geq C_{q,C_0}$, there exists a symmetric convex set
	$K_s\subseteq F$ such that
	\[
		\gamma_F(K_s)\geq e^{-s}
	\]
	and, for every $h\in K_s$,
	\[
		\sup_{u\in\Sph^n}|\ip{P_F D A_u}{h}| 
		\leq 
		C_{q,C_0}\max\left\{R,	\tau\sqrt{k}\norm{\xi}_{\mathrm{TV}}s^{-1/(2q)}\right\},
	\]
	where $P_F$ denotes the orthogonal projection onto $F$.
\end{lemma}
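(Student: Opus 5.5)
We apply Lemma~\ref{lem:chaining} in the Euclidean space $F$ to the set
\[
X=\{P_F D A_u : u\in\Sph^n\}\subseteq F .
\]
Since $\ip{P_F D A_u}{h}=\ip{v}{h}$ with $v=P_FDA_u\in X$, the lemma gives the claimed bound once we verify two hypotheses: a radius bound $X\subseteq B(0,R)$, and a covering estimate of the form $N(X,\norm{\cdot}_2,\eps)\le C(L/\eps)^{2q}$ with $L\asymp \tau\sqrt{k}\norm{\xi}_{\mathrm{TV}}$. With exponent $2q$ in place of $q$, Lemma~\ref{lem:chaining} yields the term $L(C/s)^{1/(2q)}$, which matches the stated $\tau\sqrt k\norm{\xi}_{\mathrm{TV}}s^{-1/(2q)}$. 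We may assume $\xi\ne0$; otherwise everything is zero. If $R=0$ (for instance when $\Lambda=0$, so all $A_u=0$), we can replace $R$ by a tiny positive number and let it tend to $0$, or treat the case trivially. We also need $R\le L$, which holds because $\Lambda\le k$.

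\textbf{Radius.} Projections contract norms, so $\norm{P_FDA_u}_2\le \tau\norm{A_u}_2$. For each edge $e$, $|A_u(e)|\le\int|a_u(e,t)|\,d|\xi|(t)\le\norm{\xi}_{\mathrm{TV}}$. Moreover $A_u(e)$ can be nonzero only if some band $\cB(u,t)$ with $t\in\supp\xi$ crosses $e$. The difficulty is that different $t$ may cross different edges, so we cannot simply count $\Lambda$ edges per $u$. Instead we use Cauchy--Schwarz (Minkowski) in the integral:
\[
\norm{A_u}_2\le\int\norm{a_u(\cdot,t)}_2\,d|\xi|(t)\le \sqrt{\Lambda}\,\norm{\xi}_{\mathrm{TV}}.
\]
This works because for fixed $t$ the vector $a_u(\cdot,t)$ has entries in $\{-1,0,1\}$ and is nonzero exactly on the edges crossed by $\cB(u,t)$, of which there are at most $\Lambda$. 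Hence $X\subseteq B(0,R)$.

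\textbf{Covering numbers.} This is the main step. Consider the probability space $\Omega=\cE\times[0,1]$ with measure $\lambda=\frac1k(\text{counting})\otimes\frac{|\xi|}{\norm{\xi}_{\mathrm{TV}}}$, and the family of functions $f_u=a_u:\Omega\to S=\{-1,0,1\}$. The hypothesis on the patterns $(a_u(e_1,t_1),\dots,a_u(e_\ell,t_\ell))$ is exactly the hypothesis of Corollary~\ref{cor:packing}. Therefore every $\delta$-separated subfamily in the pseudometric $\rho(u,u')=\lambda(a_u\ne a_{u'})$ has size at most $C(3/\delta)^q$, and so a maximal separated set gives a $\delta$-net in $\rho$ of that size. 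Next compare norms. Using $|a_u-a_{u'}|\le2\cdot\mathbf 1[a_u\ne a_{u'}]$ together with Cauchy--Schwarz in $d|\xi|$, for each edge $e$
\[
|A_u(e)-A_{u'}(e)|^2\le 4\norm{\xi}_{\mathrm{TV}}\int \mathbf 1[a_u(e,t)\ne a_{u'}(e,t)]\,d|\xi|(t).
\]
Summing over $e$ gives
\[
\norm{A_u-A_{u'}}_2^2\le 4k\norm{\xi}_{\mathrm{TV}}^2\rho(u,u').
\]
Since $P_FD$ has operator norm at most $\tau$,
\[
\norm{P_FDA_u-P_FDA_{u'}}_2\le 2\tau\sqrt{k}\norm{\xi}_{\mathrm{TV}}\sqrt{\rho(u,u')}.
\]
Setting $L=2\tau\sqrt k\norm{\xi}_{\mathrm{TV}}$ and $\delta=(\eps/L)^2$, we obtain $N(X,\eps)\le C_{q,C_0}(L/\eps)^{2q}$ for $0<\eps\le L$. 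A net of $X$ taken inside $X$ is what the chaining lemma uses, and a $\rho$-net indexed by parameters $u$ produces exactly such a net.

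\textbf{Conclusion.} Lemma~\ref{lem:chaining} applied with $q\to2q$ and $C_0\to C_{q,C_0}$ gives, for $s\ge C_{q,C_0}$, a symmetric convex $K_s\subseteq F$ with $\gamma_F(K_s)\ge e^{-s}$ and
\[
\sup_u|\ip{P_FDA_u}{h}|\le C\max\bigl\{R,\;L(C/s)^{1/(2q)}\bigr\}\quad\text{for all } h\in K_s,
\]
which is the claimed bound after absorbing constants. The main point to be careful about is the quadratic loss $\sqrt{\rho}$ in the comparison between the $\ell_2$ distance and the packing pseudometric. This loss is what produces the exponent $s^{-1/(2q)}$, and it is exactly the exponent stated.
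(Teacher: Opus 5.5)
Your proposal is correct and follows essentially the same route as the paper. The shared steps are:
\begin{itemize}
\item the probability space $\cE\times[0,1]$ with the normalized total-variation measure;
\item the disagreement pseudometric fed into Corollary~\ref{cor:packing};
\item the comparison $\norm{P_FD(A_u-A_v)}_2\le 2\tau\sqrt{k\rho(u,v)}\,\norm{\xi}_{\mathrm{TV}}$ with $\delta=(\eps/L)^2$;
\item Lemma~\ref{lem:chaining} applied with exponent $2q$.
\end{itemize}
The differences are only cosmetic: you bound the radius via Minkowski's integral inequality rather than Jensen, and you state explicitly that the nets must lie inside $X$, which the paper leaves implicit.
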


\begin{proof}
	If $\norm{\xi}_{\mathrm{TV}}=0$, take $K_s=F$. Otherwise define a probability
	measure $\overline{\xi}$ by
	\[
		\overline{\xi}(E) = \frac{|\xi|(E)}{\norm{\xi}_{\mathrm{TV}}},
	\]
	where $|\xi|$ denotes the total variation measure of $\xi$. Also define the probability space
	\[
		\Omega = \cE \times [0,1]
	\]
	with probability measure
	\[
		\mathbb{P}(E) = \frac{1}{k}\sum_{e\in\cE} \int_0^1 \mathbf{1}_{E}(e,t) \, d\overline{\xi}(t).
	\]
	On $\Sph^n$ define a pseudometric
	\[
		\rho(u,v) = \mathbb{P}(\{(e,t): a_u(e,t)\neq a_v(e,t) \}).
	\]
	For $c,c'\in \{-1,0,1\}$ we have
	\[
		|c-c'|^2 \leq 4\cdot \mathbf{1}_{\{c\neq c'\}},
	\]
	and hence
	\[
		\norm{A_u-A_v}_2 \leq 2\sqrt{k \rho(u,v)} \norm{\xi}_{\mathrm{TV}}.
	\]
	By Corollary~\ref{cor:packing}, there is a constant $C'_{q,C_0}$, depending
	only on $q$ and $C_0$, such that the covering numbers of these functions in the
	pseudometric $\rho$ are at most $C'_{q,C_0}\delta^{-q}$ for $0<\delta\leq 1$.

	Now we upper bound the covering numbers of
	\[
		\cA = \{P_F D A_u: u \in \Sph^n\}\subseteq F.
	\]
	By a similar calculation to the above,
	\[
		\norm{DA_u}_2^2 \leq  \tau^2 \norm{\xi}_{\mathrm{TV}}^2 \int_0^1 \sum_{e\in\cE} |a_{u}(e,t)|^2 \, d\overline{\xi}(t) \leq \tau^2 \norm{\xi}_{\mathrm{TV}}^2 \Lambda,
	\]
	and therefore $\norm{P_F D A_u}_2\le R$. Thus $\cA\subseteq B(0,R)$.
	If $R = 0$, the conclusion follows by taking $K_s = F$. Hence assume $R > 0$ and set
	\[
		L = 2\tau\sqrt{k}\norm{\xi}_{\mathrm{TV}}.
	\]
	Since $\Lambda\leq k$, we have $R\leq L$. Fix $0 < \eps\leq L$ and
	let $\delta = (\eps/L)^2$. Then $0 < \delta \leq 1$, and when $\rho(u,v) < \delta$,
	\[
		\norm{P_F D (A_u-A_v)}_2
		\leq 2\tau\sqrt{k\delta}\norm{\xi}_{\mathrm{TV}}
		= \eps.
	\]
	The packing estimate therefore gives
	\[
		N(\cA, \norm{\cdot}_2, \eps) \leq C'_{q,C_0} \left(\frac{L}{\eps}\right)^{2q}.
	\]
	Lemma~\ref{lem:chaining}, applied with exponent $2q$, now yields a symmetric
	convex set $K_s\subseteq F$ with $\gamma_F(K_s)\geq e^{-s}$ and
	\[
		\sup_{u\in\Sph^n} |\ip{P_F D A_u}{h}| 
		\leq C_{2q} \max \left\{ R, L\left(\frac{C'_{q,C_0}}{s}\right)^{1/(2q)} \right\}
		\leq C_{q,C_0}\max \left\{ R, \tau\sqrt{k}\norm{\xi}_{\mathrm{TV}}s^{-1/(2q)} \right\}
	\]
	for every $h\in K_s$, after choosing $C_{q,C_0}$ sufficiently large in terms
	of $C'_{q,C_0}$ and $C_{2q}$.
\end{proof}

\begin{lemma}
	\label{lem:weighted-fractional-band-colouring}
	Let $n \geq 1$. There exist an absolute constant $\kappa > 0$ and constants
	$\bar{c}_n,r_n>0$ such that the following holds.

	Let $B \geq 1$ be a constant,
	\[
		\eta = \sum_{i = 1}^m \alpha_i\delta_{y_i}
	\]
	be a $B$-balanced representation of a probability measure on $\Sph^n$ and
	$V \subset C(\Sph^n)$ be a finite-dimensional linear space containing
	the constants. Let $x \in [-1,1]^m$, $I = \{ i: |x_i| < 1 \}$
	be the set of unsaturated coordinates and $r = |I|$. Suppose
	\[
		\dim V|_I \le \bar c_n r \qquad\text{and}\qquad r \geq r_n.
	\]
	Let $\xi$ be a finite signed Borel measure on $[0,1]$. Then there exists $x' \in [-1,1]^m$ such that:

	\begin{enumerate}[itemsep = 0pt, label = (\roman{*})]
		\item $x_i' = x_i$ for all $i \notin I$;

		\item at least $\kappa r$ coordinates in $I$ are saturated by $x'$, i.e., satisfy $|x_i'| = 1$;

		\item exactness on $V$ is preserved:
			\[
				\sum_{i = 1}^m \alpha_i (x_i'-x_i) q(y_i) = 0,\quad \forall q \in V;
			\]

		\item the integrated equatorial band discrepancy satisfies
			\[
				\sup_{u \in \Sph^n} \left|\int_0^1 \sum_{i = 1}^m \alpha_i(x_i'-x_i)\mathbf{1}_{\cB(u,t)}(y_i)\, d\xi(t) \right| \le C_n Bm^{-1} r^{1/2-1/(2n)} \norm{\xi}_{\mathrm{TV}}.
			\]
	\end{enumerate}
\end{lemma}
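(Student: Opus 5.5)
The plan is to match the unsaturated indices, move mass only along matched pairs, and obtain the perturbation from Theorem~\ref{thm:subspace-convex-set-pc} applied to a convex body built by Lemma~\ref{lem:integrated-band-body}.

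\textbf{Step 1: low-crossing matching.} Apply Theorem~\ref{thm:low-crossing-matching} to the points $(y_i)_{i\in I}$ with the ranges $\mathfrak{B}_n$. If points repeat, treat them as distinct elements; a band either contains all copies or none, so the dual shatter bound is unchanged. By Lemma~\ref{lem:spherical-band-dual-shatter} the hypothesis holds with $q=n$; if $n=1$, use $q=2$ or any exponent slightly larger than $1$. This gives a matching $\cE$ of size $k=\lfloor r/2\rfloor$ in which every band crosses at most $\Lambda\le C_n r^{1-1/n}$ edges.

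\textbf{Step 2: parametrisation and rescaling.} For $e=\{a,b\}$, where $a$ is the distinguished endpoint, set $x'_a=x_a+\theta_e/\alpha_a$ and $x'_b=x_b-\theta_e/\alpha_b$. Leave every other coordinate fixed, including the possible unmatched index. Then $\sum_i\alpha_i(x'_i-x_i)f(y_i)=\sum_e\theta_e(f(y_a)-f(y_b))$.

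Exactness is the linear constraint $\theta\perp\{(q(y_a)-q(y_b))_e : q\in V\}$. The dimension of this space is at most $\dim V|_I\le\bar c_n r$, so its orthogonal complement $F$ has codimension at most $c_0k$ once $\bar c_n$ is small. Note that constants give the zero vector, so they impose no constraint.

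The discrepancy becomes $\sum_e\theta_eA_u(e)$, with $A_u$ as in Lemma~\ref{lem:integrated-band-body}.

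To apply Theorem~\ref{thm:subspace-convex-set-pc} we need box constraints in $[-1,1]$. Saturating $x'_a$ or $x'_b$ in terms of $\theta_e$ is awkward because the two endpoints have different scales. I would handle this by working with a single variable $z_e\in[-1,1]$ per edge. Set $\theta_e=\tau_e z_e'$, where $\tau_e=\min\{\alpha_a(1-|x_a|),\alpha_b(1-|x_b|)\}$-type slack and $z'_e=z_e-z^0_e$. Then $z_e=\pm1$ saturates at least one endpoint.

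A cleaner alternative is to use the diagonal matrix $D$ with entries $\tau_e\le B/m$, absorb the current position into $z^0$, and reparametrise the edge variable affinely. I would commit to this: $\theta=Dz'$ with $D_{ee}\le \tau:=B/m$. One must check that $z_e\in[-1,1]$ keeps both endpoints in $[-1,1]$ and that $|z_e|=1$ saturates one of them. This works by choosing $D_{ee}$ as the minimum feasible range and centering $z^0$ appropriately. Redo the iteration on the variables if $z^0$ lands at the boundary.

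\textbf{Step 3: convex body and colouring.} Exactness in the $z$ coordinates is $Dz'\in F$, equivalently $z'\in F'=D^{-1}F$, again of codimension at most $c_0k$. Handle zero entries of $D$ by discarding those edges, since they are already saturated.

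Apply Lemma~\ref{lem:integrated-band-body} on $F'$ with $s=c_0k$. Here $\langle A_u,Dz'\rangle=\langle P_{F'}DA_u,z'\rangle$ for $z'\in F'$. The required $\ell$-point pattern bound for $(e,t)\mapsto a_u(e,t)$ comes from the polynomial sign-pattern argument: $2\ell$ quadratic boundaries in $u$ give $O(\ell^{n+1})$ patterns, so $q=n+1$ works.

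Theorem~\ref{thm:subspace-convex-set-pc} then yields $z'\in K_s$ with $\kappa_0k$ saturated coordinates, hence at least $\kappa r$ new saturated indices. The bound is
\[
C\max\{\tau\sqrt{\Lambda},\ \tau\sqrt k\,k^{-1/(2q)}\}\norm{\xi}_{\mathrm{TV}}.
\]
The first term is $Bm^{-1}r^{1/2-1/(2n)}$.

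\textbf{Main obstacle.} The second term with $q=n+1$ is only $r^{1/2-1/(2n+2)}$, which is too weak. Making it match the first term requires $q=n$ in the entropy estimate. I would try to get this by exploiting that $\Lambda$-sparsity gives the better radius, and by applying the chaining Lemma~\ref{lem:chaining} with covering numbers $C(L/\eps)^{2n}$, where $L\asymp\tau\sqrt{\Lambda}$-scale replaces $\tau\sqrt k$.

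Concretely, $\rho(u,v)$ is supported on crossed edges, so $\norm{A_u-A_v}_2^2\lesssim\tau^2\norm{\xi}^2\,k\rho$. Combined with a packing bound for the $u$-family of exponent $n$, in the sense of a dual-shatter/primal-shatter $O(\ell^n)$ count for bands in $\Sph^n$ (degree $n$ in $u$-space of dimension $n$), this gives $L(C_0/s)^{1/(2n)}\asymp\tau\sqrt k\,k^{-1/(2n)}=\tau r^{1/2-1/(2n)}$. Verifying that the pattern count is $O(\ell^n)$ is the crux: $u$ ranges over an $n$-dimensional sphere and each $(e,t)$ contributes $O(1)$ quadrics, so Warren's bound gives $O(\ell^n)$. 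With this, the two terms agree and (iv) follows.
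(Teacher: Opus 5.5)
For $n\ge 2$ your argument is the paper's proof. The ingredients are the same:
\begin{itemize}
\item a low-crossing matching with exponent $n$ and mass transfer along its edges;
\item the affine normalisation $\theta=D(z-z^0)$ of $\prod_e J_e$ (the paper takes $D_{ee}=(L_e+R_e)/2$ and $z^0_e=(L_e-R_e)/(L_e+R_e)$, so $z^0\in(-1,1)^k$ automatically and no edge ever needs to be discarded);
\item the subspace $\{h:\langle Db_\ell,h\rangle=0\}$;
\item Lemma~\ref{lem:integrated-band-body} with $q=n$ and $s=c_0k$, after enlarging $r_n$ so that $c_0k\ge C_{q,C_0}$;
\item Theorem~\ref{thm:subspace-convex-set-pc}.
\end{itemize}
Your final count of $O(\ell^n)$ patterns for $u$ ranging over the $n$-dimensional sphere is the right one. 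With it, $\tau\sqrt{\Lambda}$ and $\tau\sqrt{k}\,k^{-1/(2n)}$ are both of order $\tau r^{1/2-1/(2n)}$.

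The gap is the case $n=1$. Theorem~\ref{thm:low-crossing-matching}, Corollary~\ref{cor:packing} and Lemma~\ref{lem:integrated-band-body} all require an exponent $q>1$. Taking ``$q=2$ or $q$ slightly larger than $1$'' therefore only gives $\Lambda\lesssim r^{1-1/q}$ and a chaining term of order $\tau k^{1/2-1/(2q)}$, both growing polynomially in $r$. So you do not get the claimed $\Lambda\le C_1$, nor the bound $C_1Bm^{-1}\norm{\xi}_{\mathrm{TV}}$ that (iv) demands when $n=1$. This is not cosmetic: an $r^{\epsilon}$ loss in every colouring step turns into an $N^{\epsilon}$ loss for $d=2$ when the steps are summed in Theorem~\ref{input:finite-band-sampling}.

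The missing idea is elementary and avoids chaining altogether. Order the indices of $I$ cyclically by the positions of their points on $\Sph^1$ and pair consecutive ones. Every band $\cB(u,t)$ is a union of at most two arcs, so it crosses at most four edges. Then take the convex set to be all of $F$ and bound directly, using $|\theta_e|\le 2B/m$ and the fact that for each $t$ at most four of the $a_u(e,t)$ are nonzero:
\[
\Bigl|\sum_{e\in\cE}\theta_eA_u(e)\Bigr|\le\int_0^1\sum_{e\in\cE}|\theta_e|\,|a_u(e,t)|\,d|\xi|(t)\le \frac{8B}{m}\norm{\xi}_{\mathrm{TV}}.
\]
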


\begin{proof}
	We shall first choose a pairing of the unsaturated indices such that every equatorial
	band crosses few of the pairs, and then perturb the two coordinates of $x$ in
	each pair in opposite directions, choosing the perturbations by applying the
	partial-colouring theorem inside the subspace enforcing exactness.

	\paragraph{Low-crossing pairing.}
	Let
	\[
		\mathcal{R}_I = \bigl\{\{i \in I:y_i \in \cB(u,t)\}:u \in \Sph^n,\ t \in [0,1]\bigr\}.
	\]
	When $n=1$, order the indices $i_1,\dots,i_r$ of $I$ such that the corresponding points are ordered cyclically on $\Sph^1$, breaking ties arbitrarily. 
	Every equatorial band is a union of at most two arcs and therefore crosses at most four edges of this order.
	Pair consecutive indices, leaving the last index unmatched if $r$ is odd, and
	let $\cE$ be the resulting matching on the index set $I$.
	When $n>1$, by Lemma~\ref{lem:spherical-band-dual-shatter},
	$\pi^{*}_{\mathcal{R}_I}(\ell) \le C_n \ell^n$ for all $\ell \ge 1$, and hence
	Theorem~\ref{thm:low-crossing-matching} applied to $(I,\mathcal{R}_I)$ with
	$q=n$ yields a matching $\cE$ on the index set $I$ of size $\lfloor r/2\rfloor$ such that every
	equatorial band crosses at most $C_n r^{1-1/n}$ of its edges. Thus the
	crossing number satisfies
	\begin{equation}\label{eqn:low-crossing-lambda}
		\Lambda \le
		\begin{cases}
			C_n r^{1-1/n}, & n > 1, \\
			4,         & n = 1.
		\end{cases}
	\end{equation}
	In either case, write $\cE=\{e_1,\dots,e_k\}\subseteq \binom{I}{2}$, where
	$k=\lfloor r/2\rfloor$; if $r$ is odd, the unmatched coordinate is left unchanged.

	\paragraph{Pair perturbations and constraints.}
	For $\theta = (\theta_e)_{e \in \cE} \in \R^k$, define a perturbed vector $x(\theta) \in \R^m$ by
	\[
		x_a(\theta) = x_a + \frac{\theta_e}{\alpha_a}, \qquad
		x_b(\theta) = x_b - \frac{\theta_e}{\alpha_b}
	\]
	for each pair $e = \{a,b\} \in \cE$ and leaving all other coordinates unchanged. Then
	$x(\theta) \in [-1,1]^m$ if and only if $\theta_e \in J_e$ for every
	$e = \{a,b\}$, where
	\[
		J_e = \left[ \max\{-\alpha_a(1+x_a),\,\alpha_b(x_b-1)\},
			   \min\{\alpha_a(1-x_a),\,\alpha_b(1+x_b)\} \right].
	\]
	Since $a,b \in I$, the interval $J_e$ is nondegenerate and contains $0$ in its
	interior. Also, from $\alpha_i \le B/m$,
	\begin{equation}\label{eqn:tau}	
		J_e \subseteq [-\tau, \tau], \quad\text{where}\quad \tau := \frac{2B}{m}.
	\end{equation}

	Choose $q_1,\dots,q_s \in V$ whose restrictions to $\{y_i:i \in I\}$ span
	$V|_I$. Then
	\[
		s = \dim V|_I \le \bar c_n r.
	\]
	For $e = \{a,b\} \in \cE$, define
	\[
		b_{\ell}(e) = q_{\ell}(y_a) - q_{\ell}(y_b), \qquad \ell = 1,\dots,s.
	\]
	The following are the linear constraints which will enforce exactness on $V$:
	\[
		\sum_{e \in \cE}\theta_e b_{\ell}(e) = 0, \qquad \ell = 1,\dots,s.
	\]

	For $u\in \Sph^n$, define
	\[
		a_{u}(e,t) = \mathbf{1}_{\cB(u,t)}(y_a) - \mathbf{1}_{\cB(u,t)}(y_b), \qquad e = \{a,b\} \in \cE,
	\]
	and define the integral vector $A_u\in \R^k$ by
	\[
		A_u(e) = \int_0^1 a_{u}(e,t) \, d\xi(t).
	\]
	Thus,
	\[
		\int_0^1 \sum_{i = 1}^m \alpha_i(x_i(\theta) - x_i)\mathbf{1}_{\cB(u,t)}(y_i)\, d\xi(t) = \sum_{e\in\cE} \theta_e A_u(e).
	\]

	\paragraph{Partial colouring.}
	Normalize the box $\prod_{e \in \cE}J_e$ to $[-1,1]^k$.
	Write $J_e = [-L_e,R_e]$. Define a diagonal matrix $D \in \R^{k\times k}$ and a vector $z^0 \in \R^k$ by
	\[
		D_{ee} = \frac{L_e+R_e}{2}, \qquad
		z_e^0 = \frac{L_e-R_e}{L_e+R_e}.
	\]
	Then $z^0 \in (-1,1)^k$, and $\theta(z) = D(z-z^0)$ maps $[-1,1]^k$ onto
	$\prod_{e \in \cE}J_e$. Moreover, $(\theta(z))_e$ is an endpoint of $J_e$ whenever
	$z_e \in \{-1,+1\}$.

	Let
	\begin{equation}\label{eqn:F}
		F = \{h \in \mathbb{R}^k: \langle D b_{\ell},h\rangle = 0
		\text{ for all }\ell = 1,\dots,s\}.
	\end{equation}
	Choosing $\bar c_n > 0$ sufficiently small gives
	\[
		\operatorname{codim}(F) \le s \le c_0 k,
	\]
	where $c_0$ is the constant in the statement of Theorem~\ref{thm:subspace-convex-set-pc}.
	
	If $n=1$ or $\norm{\xi}_{\mathrm{TV}} = 0$, set $K=F$ and then $\gamma_F(K)=1$.
	If $n>1$ and $\norm{\xi}_{\mathrm{TV}}>0$, then $D_{ee}\leq \tau$. The same sign-pattern argument as in
	Lemma~\ref{lem:spherical-band-dual-shatter} shows that the functions
	$(e,t)\mapsto a_u(e,t)$ satisfy the shatter bound in
	Lemma~\ref{lem:integrated-band-body} with $q=n$ and some constant
	$C'_n$ depending only on $n$. With $q=n$ and $C_0=C'_n$, the constant
	$C_{q,C_0}$ supplied by that lemma depends only on $n$; we write it as
	$C_n$.

	Increasing $r_n$ if necessary, we may assume that $c_0k\geq C_n$.
	Apply Lemma~\ref{lem:integrated-band-body} with $q=n$ and $s=c_0k$. It gives a symmetric convex set $K\subseteq F$ with
	\begin{equation}\label{eqn:gamma_F(K)}
		\gamma_F(K)\geq \exp(-c_0k)
	\end{equation}
	and
	\begin{equation}\label{eqn:bounded_inner_product_K}
		\sup_{u\in\Sph^n}|\ip{P_FDA_u}{h}|
		\leq C_n\tau\norm{\xi}_{\mathrm{TV}}
		\max\left\{\sqrt{\Lambda},\sqrt{k}(c_0k)^{-1/(2n)}\right\},\quad \forall h\in K.
	\end{equation}
	By \eqref{eqn:low-crossing-lambda}, \eqref{eqn:tau}, and $k\asymp r$, we obtain that
	\begin{equation}\label{eqn:inner-product-bound}
		\sup_{u \in \Sph^n} |\ip{P_F D A_u}{h}| \leq C_n\frac{B}{m}r^{\frac12-\frac{1}{2n}}\norm{\xi}_{\mathrm{TV}},\qquad \forall h\in K.
	\end{equation}
	Theorem~\ref{thm:subspace-convex-set-pc} then yields
	$z \in (z^0 + K)\cap[-1,1]^k$ with at least $\kappa_0 k$ coordinates in
	$\{-1,+1\}$. Define $\theta = D(z-z^0)$ and $x' = x(\theta)$. Since
	$z - z^0 \in K \subseteq F$,
	\[
		\sum_{e \in \cE}\theta_e b_{\ell}(e) = \langle D b_{\ell},z - z^0\rangle = 0 \qquad \ell = 1,\dots,s.
	\]
	Now we look at the integrated discrepancy. If $n>1$ and $\norm{\xi}_{\mathrm{TV}}>0$, then by \eqref{eqn:inner-product-bound},
	\[
		\left|\sum_{e \in \cE}\theta_e A_u(e)\right|
		= |\langle P_F D A_u,z - z^0\rangle|
		\le C_n \frac{B}{m} r^{\frac{1}{2}-\frac{1}{2n}}\norm{\xi}_{\mathrm{TV}}.
	\]
	If $n=1$, then every band $\cB(u,t)$ crosses at most four edges of $\cE$ for
	each $t$, and therefore
	\[
		\left|\sum_{e \in \cE} \theta_e A_u(e)\right|
		\le \int_0^1 \sum_{e\in\cE}|\theta_e a_u(e,t)|\,d|\xi|(t)
		\le C_1\frac{B}{m}\norm{\xi}_{\mathrm{TV}}.
	\]
	The same bound is trivial when $\norm{\xi}_{\mathrm{TV}}=0$. Thus, in all cases,
	\begin{equation}\label{eqn:discrepancy-bound}
		\left|\sum_{e \in \cE}\theta_e A_u(e)\right|
		\le C_n \frac{B}{m} r^{\frac{1}{2}-\frac{1}{2n}}\norm{\xi}_{\mathrm{TV}}.
	\end{equation}

	\paragraph{Verification.}
	Now we verify the guarantees (i)--(iv). Since $\theta_e \in J_e$, $x' \in [-1,1]^m$,
	and unchanged coordinates stay unchanged. This shows (i). If $\theta_e$ is an endpoint of
	$J_e$, then at least one of $x_a'$ and $x_b'$ equals $\pm 1$, so at least
	$\kappa_0 k \ge \kappa r$ coordinates in $I$ are saturated, for some absolute constant $\kappa > 0$. This shows (ii).

	For exactness (iii), let $q \in V$. Since the restrictions of $q_1,\dots,q_s$ span
	$V|_I$, the vector $(q(y_i))_{i \in I}$ is a linear combination of the vectors
	$(q_{\ell}(y_i))_{i \in I}$. Therefore
	\[
		\sum_{i = 1}^m \alpha_i(x_i'-x_i)q(y_i) = \sum_{e = \{a,b\} \in \cE}\theta_e(q(y_a)-q(y_b)) = 0.
	\]
	Finally, for every $u\in \Sph^n$
	\[
		\int_0^1 \sum_{i = 1}^m \alpha_i(x_i'-x_i)\mathbf{1}_{\cB(u,t)}(y_i)\, d\xi(t) 
		= \int_0^1 \sum_{e \in \cE}\theta_e	a_u(e,t) \, d\xi(t) 
		= \sum_{e \in \cE}\theta_e A_u(e),
	\]
	and the estimate \eqref{eqn:discrepancy-bound} gives (iv).
\end{proof}

\subsection{Iteration of support reduction}
\label{sec:finite-band-sampling-proof}

\begin{proof}[Proof of Theorem~\ref{input:finite-band-sampling}]
	Without loss of generality, we may assume a balanced representation of
	$\mu$. If an atom of $\mu$ has mass $\rho \ge 2/M$, split it into
	$\lfloor \rho M/2\rfloor + 1$ equal pieces. The resulting representation of $\mu$
	has at most $3M/2$ atoms, each having mass at most $2/M$, and is therefore $3$-balanced.
	This splitting does not change the dimension of the restricted \(V\)-space. We use this balanced
	representation of $\mu$ below and merge coincident atoms at the end.

	Choose the constant $c_n$ small enough that
	\[
		c_n \le \min\left\{\frac{\bar c_n}{4},\frac{1}{4r_n}\right\}.
	\]

	We first describe the support-reduction step for an arbitrary current measure $\eta$.
	Let $\eta$ have a $B$-balanced representation
	\[
		\eta=\sum_i\alpha_i\delta_{y_i},
	\]
	let $Y=(y_1,\dots,y_m)$, and assume
	\[
		m>N\qquad\text{and}\qquad \dim V|_Y\leq c_nN.
	\]
	Since the constants belong to $V$, we have $1\leq\dim V|_Y\leq c_nN$,
	and hence $N/4\geq r_n$ by the choice of $c_n$.

	\paragraph{Support reduction step.}
	Starting from $x=0$, repeatedly apply Lemma~\ref{lem:weighted-fractional-band-colouring}
	to the currently unsaturated coordinates until at most $N/4$ coordinates
	remain unsaturated. At an intermediate stage, let $I$ be the index set of
	the unsaturated coordinates. Then $r:=|I|>N/4$ and
	\[
		\dim V|_I\leq\dim V|_Y\leq c_nN<4c_nr\leq\bar c_nr,
	\]
	and $r\geq r_n$, so Lemma~\ref{lem:weighted-fractional-band-colouring}
	is applicable.

	Each iteration saturates at least a constant fraction of the
	currently unsaturated coordinates. Thus the number of unsaturated coordinates decreases
	geometrically. Let $x\in[-1,1]^m$ be the final vector and set
	\[
		I^\ast=\{i:|x_i|<1\}.
	\]
	Then $|I^\ast|\leq N/4$ and
	\[
		\sum_{i=1}^m\alpha_ix_iq(y_i)=0\qquad\forall q\in V.
	\]
	Moreover, summing the band discrepancy increments over the geometrically
	decreasing unsaturated cardinalities gives
	\[
		\sup_{u\in\Sph^n} \left|\int_0^1\sum_{i=1}^{m} \alpha_i x_i \mathbf{1}_{\cB(u,t)}(y_i)\,d\xi(t)\right|
		\leq
		\begin{cases}
			C_1 B m^{-1}(1+\log(m/N)) \norm{\xi}_{\mathrm{TV}},&n=1,\\
			C_n B m^{-1/2-1/(2n)}\norm{\xi}_{\mathrm{TV}},&n\geq2.
		\end{cases}
	\]
	Indeed, when $n\geq 2$ and the unsaturated set has size $r$, the
	increment is bounded by
	\[
		C_n B m^{-1}r^{1/2-1/(2n)}\norm{\xi}_{\mathrm{TV}},
	\]
	and the resulting geometric sum is bounded by
	$C_n B m^{-1/2-1/(2n)}\norm{\xi}_{\mathrm{TV}}$. For $n=1$, there
	are $O(1+\log(m/N))$ iterations, each contributing at most
	$C_1 B m^{-1}\norm{\xi}_{\mathrm{TV}}$.

	For $\sigma\in\{-1,+1\}$, define
	\[
		\eta_{\sigma}=\sum_{i=1}^{m}
		\alpha_i(1+\sigma x_i)\delta_{y_i}.
	\]
	Since constants belong to $V$, we have $\sum_i\alpha_ix_i=0$, so each
	$\eta_{\sigma}$ has total mass one. Moreover, for every $q\in V$,
	\[
		\int_{\Sph^n}q(y)\,d\eta_{\sigma}(y)
		=\int_{\Sph^n}q(y)\,d\eta(y).
	\]
	Also,
	\[
		\eta_{\sigma}(\cB(u,t))-\eta(\cB(u,t))
		=\sigma\sum_{i=1}^{m}\alpha_ix_i
		\mathbf{1}_{\cB(u,t)}(y_i),
	\]
	so the preceding discrepancy estimate applies to either choice of
	$\sigma$.
	Choose $\eta_\sigma$ with a smaller number of atoms as the resulting
	measure. Since only coordinates in $I^\ast$ may remain fractional, the
	chosen $\eta_\sigma$ has at most
	\[
		\frac{m-|I^\ast|}{2}+|I^\ast|
		=\frac{m}{2}+\frac{|I^\ast|}{2}
		\leq\frac{m}{2}+\frac{N}{8}
	\]
	atoms, and every nonzero term has weight at most $2B/m$.

	\paragraph{Iteration.}
	Let the initial measure $\eta_0=\mu$, which has $m_0$ atoms (including
	multiplicity). If $m_0\leq N$, then merging coincident atoms in $\eta_0$
	already gives a valid choice of $\nu$. Thus we assume $m_0>N$. Let $Y_0$
	be the indexed family induced by the representation $\eta_0$, and set
	\[
		s=\dim V|_{Y_0}.
	\]
	Starting from $\eta_0$, repeatedly apply the support-reduction step,
	obtaining a sequence $\eta_1,\eta_2,\dots$, until $\eta_j$ has
	$m_j\leq N$ atoms. At every stage, the current indexed family $Y_j$ is a
	subfamily of $Y_0$, counted with multiplicity, and hence
	$\dim V|_{Y_j}\leq s\leq c_nN$. Thus the support-reduction step is
	applicable whenever $m_j>N$. The atom counts satisfy
	\[
		m_{j+1}\leq\frac{m_j}{2}+\frac{N}{8}
		<\frac58m_j.
	\]
	Thus the final representation contains at most $N$ atoms.

	We also need to keep the weights balanced. Initially $B_0=3$. Suppose at
	stage $j$, all weights are at most $B_j/m_j$. Since the next weights are
	at most $2B_j/m_j$, we may take
	\[
		B_{j+1}=2B_j\frac{m_{j+1}}{m_j}
		\leq B_j\left(1+\frac{N}{4m_j}\right).
	\]
	Since the sequence $m_j$ decreases geometrically until it reaches $N$,
	\[
		\prod_j\left(1+\frac{N}{4m_j}\right)
		\leq\exp\left(\sum_j\frac{N}{4m_j}\right)\leq C,
	\]
	where $C$ is an absolute constant. Thus all weight-balance constants
	$B_j$ are bounded by an absolute constant.

	Since $m_j$ decreases geometrically until it reaches $N$, we have
	\[
		\sum_j m_j^{-1/2-1/(2n)} \leq C_n N^{-1/2-1/(2n)}
	\]
	and
	\[
		\sum_j \frac{1+\log(m_j/N)}{m_j} = \frac{1}{N}\sum_j \frac{1+\log(m_j/N)}{m_j/N} \leq \frac{C}{N}.
	\]
	Thus, summing the band discrepancy increments gives
	\[
		\sup_{u \in \Sph^n}\left|\int_0^1\bigl(\eta^{\ast}(\cB(u,t)) - \mu(\cB(u,t))\bigr)\,d\xi(t)\right|
		\le C_n N^{-1/2-1/(2n)}\norm{\xi}_{\mathrm{TV}},
	\]
	where $\eta^{\ast}$ is the final measure. Exact agreement on $V$ is
	preserved at each stage, so
	\[
		\int_{\Sph^n}q(y)\,d\eta^{\ast}(y) = \int_{\Sph^n}q(y)\,d\mu(y) \qquad \forall
		q \in V.
	\]
	Merging coincident atoms in $\eta^{\ast}$ gives $\nu$, which is supported on
	at most $N$ points of $\supp(\mu)$ and retains both exactness on $V$ and the
	integrated band discrepancy bound.
\end{proof}

%!TeX root = main.tex

\section{Algorithmic remarks}\label{sec:algorithm}

In this section, we describe an algorithmic implementation of Theorem~\ref{thm:additive}. Its proof gives an algorithmic construction of $\nu$, which is summarized in Algorithm~\ref{alg:construction}. 

\begin{algorithm}
\caption{Algorithmic implementation of Theorem~\ref{thm:additive}}\label{alg:construction}
\begin{algorithmic}[1]
	\State Choose $N \asymp_{d,p} \eps^{-2(d-1)/(d+2p)}$ and $K\asymp_{d,p} \eps^{-2/(d+2p)}$
	\State Decompose $|t|^p = p_K(t) + b_K(t)$ and write $b_K(t)=\beta_K(|t|)$
	\While{$m > N$} 
		\State Replace the current representation $\mu$ with a $3$-balanced $\sum_{i=1}^m \alpha_i \delta_{y_i}$
		\State Initialize $x \in [-1,1]^m$ to $x=0$
		\While{more than $N/4$ coordinates of $x$ are unsaturated}
			\State $I = \{i: |x_i| < 1\}$
			\State Compute a low-crossing matching $\cE$ on the indexed family $((y_i)_{i\in I})$ \label{alg:line:low-crossing}
			\State Form $D$, $z^0$ and compute the equations that define $F$ in \eqref{eqn:F}
			\If {$d = 2$ or $\beta_K = 0$}
				\State set $\mathcal K = F$
			\Else 
				\State Apply Lemma~\ref{lem:integrated-band-body} to construct a convex set $\mathcal K\subseteq F$ satisfying \eqref{eqn:gamma_F(K)} and \eqref{eqn:bounded_inner_product_K} \label{alg:line:convex-set}
			\EndIf
			\State Apply the partial colouring theorem (Theorem~\ref{thm:subspace-convex-set-pc}) to obtain $z \in (z^0 + \mathcal K)\cap[-1,1]^k$ \label{alg:line:partial-colouring}
			\State Use $\theta = D(z-z^0)$ to update $x$
		\EndWhile
		\State Update $\mu$ based on $x$
	\EndWhile
\end{algorithmic}
\end{algorithm}

The main nontrivial steps are Lines \ref{alg:line:low-crossing}, \ref{alg:line:convex-set} and \ref{alg:line:partial-colouring}, as the other lines can be done deterministically in polynomial time. Note that the standard proof of the Jackson-type approximation in Lemma~\ref{lem:important_approx} is constructive. Combining the explicit constant in Jackson's approximation theorem~\cite{FKS09} with the estimates in Appendix~\ref{sec:L_1-approx}, all constants can be made explicit.

Regarding Line \ref{alg:line:low-crossing}, note that the relevant equatorial bands induce only
polynomially many distinct incidence patterns and can be enumerated in polynomial time; see~\cite[Theorem~3.3]{EORS86}. 
The low-crossing matching can be found by a deterministic polynomial-time algorithm; see the proof of \cite[Theorem~5.17]{matousek-discrepancy}.

Regarding Line \ref{alg:line:partial-colouring}, note that Rothvoss's partial colouring theorem is constructive
in randomized polynomial time~\cite{rothvoss}. 
Rothvoss's algorithm always returns a vector $z\in(z^0 + \mathcal K)\cap[-1,1]^k$, and with at least a constant probability, at least $\kappa_0k$ coordinates of $z$ are saturated. Thus, we repeat the algorithm until the required saturation progress occurs.
There are $T = O(\log(M/N))$ outer support-reduction
iterations, each containing at most $O(T)$ invocations of the
Rothvoss's partial-colouring algorithm. At every such invocation, perform
$O(\log T)$ independent trials and accept the first one that achieves
the required saturation progress. For sufficiently large constant in the big $O$-notation, the
failure probability of each invocation is at most $1/(C T^2)$ for a large absolute constant $C$.
A union bound over the $O(T^2)$ invocations gives overall success
probability at least $2/3$, while preserving polynomial running time.

The remaining question is Line \ref{alg:line:convex-set}. The proof of Lemma~\ref{lem:integrated-band-body} invokes Lemma~\ref{lem:chaining}, which uses a family of nets of $\cA = \{P_F D A_u: u \in \Sph^{d-1}\}\subseteq F$. The main difficulty is that the proof of the net size is existential, and so we need an algorithmic method to construct the convex set $\mathcal K$.

When $d=2$ or $\norm{\beta_K}_{\mathrm{TV}}=0$, the proof takes $\mathcal K=F$, so no net construction is needed. Hence, assume below that $d\geq 3$ and $\norm{\beta_K}_{\mathrm{TV}}>0$. Our idea is to construct a polynomial-size candidate set and prune it greedily; the packing lemma then guarantees the required size bound. For a pair $e = \{a,b\}$,
\[
	A_u(e)=\beta_K(|\ip{u}{y_b}|)-\beta_K(|\ip{u}{y_a}|),
\]
and hence
\begin{equation}\label{eqn:naive_bound}
	\norm{P_F D(A_u - A_v)}_2 \leq 2\tau \norm{\beta_K'}_\infty \sqrt{k}\norm{u - v}_2
\end{equation}
for $u,v\in\Sph^{d-1}$. By the construction in the proof of Lemma~\ref{lem:polynomial_approx}, $p_K'(\cos\theta)$ is a trigonometric polynomial of degree at most $K-1$ and
\[
	\int_0^{2\pi}|p_K'(\cos\theta)|\,d\theta
	\leq p\int_0^{2\pi}|G_{p-1}(\theta)|\,d\theta+C_pK^{-p}
	\leq C_p'.
\]
As mentioned above, the value of $C_p$ is tractable, and so is the value of $C_p'$.
Then, the standard $L_\infty$--$L_1$ inequality for trigonometric polynomials (see, e.g.~\cite[p.~229]{timan}) gives
\[
	\norm{p_K'}_\infty
	=\norm{p_K'(\cos\theta)}_\infty
	\leq \frac{K}{\pi}\int_0^{2\pi}|p_K'(\cos\theta)|\,d\theta
	\leq \frac{C_p'K}{\pi}.
\]
Since $\beta_K(t)=t^p-p_K(t)$ on $[0,1]$, it follows that
\begin{equation}\label{eqn:beta_K'_inf}
	\norm{\beta_K'}_\infty \leq p+\norm{p_K'}_\infty \leq p+\frac{C_p'K}{\pi}=:C_\beta.
\end{equation}
Therefore, if we want to construct a $\delta$-net of $\cA$, we can first construct an $\eta$-net $\mathcal{X}$ of $\Sph^{d-1}$, where
\[
	\eta = \frac{\delta}{8\tau C_\beta \sqrt{k}}.
\]
It follows from \eqref{eqn:naive_bound} and \eqref{eqn:beta_K'_inf} that
\[
	\mathcal{C} = \{P_F D A_x: x\in \mathcal{X}\}
\]
is a $(\delta/4)$-net of $\cA$, which is sufficient for the construction. This step can be done in polynomial time since $d$ is a constant and $|\mathcal{X}|$ is polynomial in $1/\eta$, hence polynomial in $m/\delta$. We can then greedily choose a maximal $(\delta/2)$-separated subset of $\mathcal{C}$, which is a $\delta$-net of $\cA$. The required net-size bound is the one supplied by the packing argument in Lemma~\ref{lem:integrated-band-body}.

The proof constructs $\mathcal K$ by retaining the first $J$ levels of the intersection in Lemma~\ref{lem:chaining}. Its defining slabs have the form
$\{h\in F: |\langle w,h\rangle|\leq A_{2d-2}\sqrt{j} \norm{w}_2\}$ for various vectors $w$. Here $A_{2d-2}$ denotes the constant $A_q$ chosen in Lemma~\ref{lem:chaining} with $q=2d-2$. By \eqref{eqn:inner-product-bound_chaining}, for all $v\in\cA$ and $h\in\mathcal K$,
\[
	|\ip{v}{h}| \leq 2^{-J/2}\widetilde{R}\norm{h}_2 + CA_{2d-2}\widetilde{R}.
\]
For the standard Gaussian vector $g_F$ in $F$, with probability at least $1-\exp(-c_1 k)$, it holds that $\norm{g_F}_2\leq C\sqrt{k}$ for some absolute constants $C, c_1 > 0$. Hence, we further intersect $\mathcal K$ with $B(0,C\sqrt{k})$, so every $h$ in the resulting set satisfies $\norm{h}_2\leq C\sqrt{k}$. Also, after adjusting the absolute constants, a union bound shows that the intersection still has Gaussian measure at least $\exp(-c_0 k)$.
It suffices to take $J = O(\log k)$ such that $2^{-J/2}C\sqrt{k}\leq C'A_{2d-2}$ for a suitable absolute constant $C'$. When choosing the chaining parameters, we may use the explicit upper bound $C_pK^{-p}$ in place of $\norm{\beta_K}_{\mathrm{TV}}$. It follows that, for every retained level $j\leq J$, the radius $2^{-j/2}\widetilde R$ is $1/\poly(m,K)$, so all the nets described above can be constructed in polynomial time. Thus, the final convex set $\mathcal K$ is the intersection of polynomially many slabs and one Euclidean ball. This description also gives a polynomial-time separation oracle for $\mathcal K$, which is needed for Rothvoss's partial colouring algorithm.

Overall, we conclude that the sparsified measure $\nu$ can be constructed by a randomized algorithm in $\poly(M,1/\eps)$ time with probability at least $2/3$, assuming the standard real-arithmetic model.

\section*{Acknowledgements}
The author is supported in part by the Singapore Ministry of Education AcRF Tier 1 grant RG21/25.

The author used ChatGPT 5.5 Plus during the development of this work to explore and refine proof strategies, formulate intermediate results, and generate initial drafts of parts of the technical proofs. All AI-generated material was substantially revised, corrected, and integrated by the author. All mathematical claims, proofs, and references were independently checked by the author.

\bibliographystyle{plain}
\bibliography{reference}

\appendix

%!TeX root = main.tex

\section{\texorpdfstring{Polynomial Approximation in $L_1$-Norm}{Polynomial Approximation in L1-Norm}}\label{sec:L_1-approx}

The goal of this section is to prove Lemma~\ref{lem:important_approx}. We first recall some basic definitions from approximation theory.

For a $2\pi$-periodic function $f$ that is integrable on $[0,2\pi]$, define its integral modulus of smoothness of order $r\geq 1$ by
\[
	\omega_r(f; \delta)_{L^1} = \sup_{|h|\leq \delta} \int_{0}^{2\pi} |\Delta_h^r f(\theta)|\,d\theta,
\]
where
\[
	\Delta_h^r f(\theta) = \sum_{j=0}^r (-1)^{r-j} \binom{r}{j} f(\theta + jh).
\]
If $f^{(r-1)}$ is absolutely continuous on the interval with endpoints $\theta$ and $\theta+rh$, we have the classical finite-difference identity (see, e.g.~\cite[p.~103]{timan})
\begin{equation}\label{eqn:modulus_integral_form}
	\Delta_h^r f(\theta) = \int_0^h\cdots\int_0^h f^{(r)}(\theta + t_1 + \cdots + t_r) \, dt_1\cdots dt_r.
\end{equation}
We also need the following classical result of Jackson-type approximation.

\begin{lemma}[{\cite[p.~325]{timan}}]
	Suppose that $r\geq 1$ is an integer, $f$ is $2\pi$-periodic and is $L^1$-integrable on $[0,2\pi]$. For any integer $K\geq 0$, there exists a trigonometric polynomial $p_K$ of degree at most $K$ such that
	\[
		\int_0^{2\pi} |f(t) - p_K(t)| \, dt \leq C_r \omega_r\left(f; \frac{1}{K+1}\right)_{L^1},
	\]
	where $C_r$ is a constant depending only on $r$.
\end{lemma}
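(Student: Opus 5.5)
The plan is to construct an explicit linear approximation operator from a generalized Jackson kernel, designed so that the approximation error equals an average of $r$-th differences of $f$. Fix $r\ge 1$ and choose an integer $s\ge (r+2)/2$. For $K\ge 0$, let $m=\lfloor K/(s-1)\rfloor+1$ when $s>1$. Define
\[
J_K(t)=\lambda_K\left(\frac{\sin(mt/2)}{\sin(t/2)}\right)^{2s},
\]
where $\lambda_K$ normalizes $\int_{-\pi}^{\pi}J_K=1$. Then $J_K$ is a nonnegative even trigonometric polynomial of degree $s(m-1)\le K$. If $K$ is so small that $m$ would be degenerate (in particular $K=0$), I would simply take $J_K\equiv 1/(2\pi)$; the argument below is unchanged. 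The operator is
\[
T_Kf(x)=\int_{-\pi}^{\pi}J_K(t)\sum_{j=1}^{r}(-1)^{j+1}\binom{r}{j}f(x+jt)\,dt .
\]

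\emph{Step 1: $T_Kf$ is a trigonometric polynomial of degree at most $K$.} Expand $f$ in its Fourier series. The term $e^{ik(x+jt)}$ contributes $e^{ikx}\widehat{J_K}(-jk)$ after integrating against $J_K$. Since $\widehat{J_K}$ vanishes outside $[-K,K]$, only $|k|\le K/j\le K$ survive. The same conclusion holds rigorously for $L^1$ functions, because convolution commutes with taking Fourier coefficients.

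\emph{Step 2: an error identity.} Using $\int J_K=1$ and $\sum_{j=0}^r(-1)^{r-j}\binom rj f(x+jt)=\Delta_t^rf(x)$, I get
\[
f(x)-T_Kf(x)=(-1)^r\int J_K(t)\,\Delta^r_tf(x)\,dt .
\]
By Fubini (Minkowski's integral inequality) this gives
\[
\|f-T_Kf\|_{L^1}\le\int_{-\pi}^{\pi}J_K(t)\,\omega_r(f;|t|)_{L^1}\,dt .
\]

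\emph{Step 3: scaling of the modulus.} I would prove $\omega_r(f;\lambda\delta)_{L^1}\le(1+\lambda)^r\omega_r(f;\delta)_{L^1}$. For an integer $n$, the operator identity $\Delta_{nh}^r=\bigl(\sum_{i=0}^{n-1}E^{ih}\bigr)^r\Delta_h^r$, where $E$ denotes translation, writes $\Delta^r_{nh}f$ as a sum of $n^r$ translates of $\Delta_h^rf$. Translation invariance of the $L^1$ norm on the circle then gives the bound with factor $n^r$, and one takes $n=\lceil\lambda\rceil$. Applying this with $\delta=1/(K+1)$ and $\lambda=(K+1)|t|$ reduces the estimate to
\[
\|f-T_Kf\|_{L^1}\le\omega_r\bigl(f;\tfrac1{K+1}\bigr)_{L^1}\int J_K(t)\bigl(1+(K+1)|t|\bigr)^r\,dt .
\]

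\emph{Step 4: the moment bound, which is the main obstacle.} It remains to show $\int J_K(t)|t|^r\,dt\le C_rK^{-r}$ uniformly in $K$. For this I would use $|\sin(t/2)|\ge|t|/\pi$ on $[-\pi,\pi]$ together with $|\sin(mt/2)|\le\min(1,m|t|/2)$. These give the lower bound $\lambda_K^{-1}\gtrsim_s m^{2s-1}$ and the upper bound $\int|t|^r(\cdots)^{2s}\lesssim_s m^{2s-1-r}$. The second integral converges at infinity precisely because $2s-r>1$, which is why $s$ is chosen with $s\ge (r+2)/2$. Since $m\asymp_s K+1$, this yields the moment bound. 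The degenerate constant-kernel case is trivial, since there $(1+(K+1)|t|)^r\le(1+\pi)^r$. Combining Steps 1--4 gives the lemma with $C_r$ depending only on $r$ (through $s$).
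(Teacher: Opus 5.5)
Your route is the standard generalized-Jackson-kernel proof of the Jackson--Stechkin estimate. The paper gives no proof of its own and simply cites Timan, so there is no separate route to compare. Steps~2 and~3 are correct as written.

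There is, however, a concrete error in your choice of $m$, and it breaks Step~1. Since $\bigl(\sin(mt/2)/\sin(t/2)\bigr)^2$ is a trigonometric polynomial of degree $m-1$, your kernel has degree $s(m-1)=s\lfloor K/(s-1)\rfloor$. This is generally larger than $K$. For example, for $r\in\{1,2\}$ the smallest admissible value is $s=2$, which gives $m-1=K$ and $\deg J_K=2K$. Step~1 then only shows $\deg T_Kf\le \deg J_K$, which exceeds $K$.

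The fix is to take $m=\lfloor K/s\rfloor+1$ instead. Then $s(m-1)\le K$ and $(K+1)/s\le m\le K+1$, so $m\asymp_s K+1$ still holds and Step~4 goes through verbatim. The estimates of Step~4 also cover $m=1$, which now occurs for all $K<s$. So no separate degenerate case is needed. In any case, your bound $(1+\pi)^r$ for that case is valid only when $K=0$.

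There is also a smaller slip in Step~4. The two inequalities you quote bound the ratio $\sin(mt/2)/\sin(t/2)$ from above. They therefore give $\int_{-\pi}^{\pi}|t|^r\bigl(\sin(mt/2)/\sin(t/2)\bigr)^{2s}\,dt\lesssim_s m^{2s-1-r}$, but not the lower bound $\lambda_K^{-1}\gtrsim_s m^{2s-1}$. For the lower bound, use $|\sin(t/2)|\le|t|/2$ together with $|\sin(mt/2)|\ge m|t|/\pi$ for $|t|\le\pi/m$. The ratio is then at least $2m/\pi$ on an interval of length $2\pi/m$.

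With these two corrections your argument is complete, and it gives $C_r$ depending only on $r$ (through $s$).
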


The required estimate for the integral modulus is the following.
\begin{lemma}\label{lem:G_alpha}
	Let $\alpha \geq 0$ and let $r > \alpha+1$ be an integer. Define
	\[
		G_\alpha(\theta) :=
		\begin{cases}
			\sgn(\cos\theta)|\cos\theta|^\alpha, & \alpha>0,\\
			\sgn(\cos\theta), & \alpha=0.
		\end{cases}
	\]
	Then there exist constants $C_{\alpha,r}, \delta_r > 0$ such that
	\[
		\omega_r(G_\alpha;\delta)_{L^1} \leq C_{\alpha,r}\delta^{\alpha+1}
	\]
	for all $0<\delta<\delta_r$.
\end{lemma}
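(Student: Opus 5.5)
We need to bound $\int_0^{2\pi}|\Delta_h^r G_\alpha(\theta)|\,d\theta \lesssim |h|^{\alpha+1}$ for $|h|\le\delta<\delta_r$. By symmetry ($G_\alpha(-\theta)=G_\alpha(\theta)$ and the reflection $h\mapsto -h$ only shifts $\theta$), it suffices to take $0<h\le\delta$. The singularities of $G_\alpha$ are at the zeros of $\cos\theta$, namely $\theta=\pm\pi/2$ mod $2\pi$. Near such a point, $\cos\theta=\mp\sin(\theta\mp\pi/2)$, so locally $G_\alpha(\pi/2+s)=-\sgn(s)|\sin s|^\alpha$, and similarly at $-\pi/2$. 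Away from these points, $G_\alpha=\pm|\cos\theta|^\alpha$ is real-analytic. The plan is to split $[0,2\pi]$ according to whether the stencil $[\theta,\theta+rh]$ meets a singular point.

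\emph{Good set.} Let $S=\{\theta: [\theta,\theta+rh]\cap\{\pm\pi/2+2\pi\Z\}=\emptyset,\ \mathrm{dist}(\theta,\{\pm\pi/2\})\ge 2rh\}$. On $S$, $G_\alpha$ is smooth on the stencil, and \eqref{eqn:modulus_integral_form} gives $|\Delta_h^rG_\alpha(\theta)|\le h^r\sup_{[\theta,\theta+rh]}|G_\alpha^{(r)}|$. Differentiating $|\cos|^\alpha$ $r$ times shows $|G_\alpha^{(r)}(\phi)|\le C_{\alpha,r}\,\mathrm{dist}(\phi,\pm\pi/2)^{\alpha-r}$ near the singular points, via the local form $\pm|\sin s|^\alpha$ and $|\sin s|\asymp|s|$; the derivative is bounded elsewhere. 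On the stencil, the distance to the singularity is at least half of $\mathrm{dist}(\theta,\pm\pi/2)$. Hence
\[
\int_S|\Delta_h^rG_\alpha|\le C h^r\Bigl(1+\int_{2rh}^{\pi}s^{\alpha-r}\,ds\Bigr)\le C_{\alpha,r}h^r\bigl(1+h^{\alpha+1-r}\bigr)\le C_{\alpha,r}h^{\alpha+1},
\]
where the integral converges at infinity and is dominated by its lower endpoint because $r>\alpha+1$. This is exactly where the hypothesis $r>\alpha+1$ enters. For $h<\delta_r$ small, $h^r\le h^{\alpha+1}$.

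\emph{Bad set.} The complement of $S$ consists of at most two intervals per period, each of length $O(rh)$. On it we use the trivial bound $|\Delta_h^rG_\alpha|\le 2^r\,\sup_{\text{stencil}}|G_\alpha|$. When $\alpha=0$ this gives a contribution $O(h)=O(h^{\alpha+1})$. When $\alpha>0$ we can do better: all stencil points lie within $O(rh)$ of the singular point, so $|G_\alpha|\le C(rh)^\alpha$ there, and the contribution is $O(h\cdot h^\alpha)=O(h^{\alpha+1})$. Summing the good and bad contributions gives the claim, with $\delta_r$ chosen so that $2r\delta_r<\pi/4$; this keeps the two singular neighbourhoods disjoint and makes the local expansions valid.

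The main obstacle is the derivative bound $|G_\alpha^{(r)}(\pi/2+s)|\lesssim |s|^{\alpha-r}$, uniformly for small $|s|$. I would prove it by writing $|\sin s|^\alpha=|s|^\alpha\psi(s)$ with $\psi(s)=(\sin s/s)^\alpha$ smooth and positive near $0$, and then applying Leibniz's rule: each term is bounded by $|s|^{\alpha-j}$ with $j\le r$, and $|s|^{\alpha-j}\le|s|^{\alpha-r}$ for $|s|\le1$. The rest is bookkeeping.
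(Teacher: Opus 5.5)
Your proof is correct and follows essentially the same route as the paper. Like the paper, you use the trivial bound $2^r(3r\cdot)^\alpha$ on a neighbourhood of radius $2r(\cdot)$ around the zeros of $\cos$, and on the complement the integral representation combined with the Leibniz-rule estimate $|G_\alpha^{(r)}(z+x)|\lesssim|x|^{\alpha-r}$ obtained from the factorisation $|x|^\alpha(\sin x/x)^\alpha$. The only differences are cosmetic: your neighbourhoods depend on $h$ rather than $\delta$, you use a pointwise supremum over the stencil instead of Tonelli, and you treat $\alpha=0$ within the same argument rather than separately.
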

\begin{proof}
	We first consider the case $\alpha = 0$. It is easy to verify using the definition of $\omega_r(G_\alpha;\delta)_{L^1}$ that $\omega_r(G_0;\delta)_{L^1} \leq C_r \delta$.
	
	Now we assume that $\alpha > 0$.
	Choose $\delta_r \asymp 1/r$ such that $3r\delta_r<\pi/2$ and fix $\delta\in (0,\delta_r)$. Define $Z = \{\pi/2, 3\pi/2\}$.
	Fix $h$ with $|h|\leq \delta$, and let $I_h$ denote the interval with endpoints $0$ and $h$.
	We separate the integral over the neighbourhood $N(Z, 2r\delta)$ and its complement.
	
	Case 1: Suppose that $\theta\in N(Z, 2r\delta)$, then for $0\leq j\leq r$, $\theta + j h \in N(Z, 3r\delta)$ and so
	\[
		|G_\alpha(\theta + jh)| \leq (3r\delta)^\alpha.
	\]
	It follows that
	\[
		|\Delta_h^r G_\alpha(\theta)| \leq \sum_{j=0}^r \binom{r}{j} (3r\delta)^\alpha = 2^r (3r\delta)^\alpha
	\]
	and
	\[
		\int_{N(Z, 2r\delta)} |\Delta_h^r G_\alpha(\theta)| \, d\theta \leq 8r\delta\cdot 2^r (3r\delta)^\alpha \leq C_{\alpha,r} \delta^{\alpha+1}.
	\]
	
	Case 2: For every $\theta\notin N(Z,2r\delta)$, the interval with endpoints $\theta$ and $\theta+rh$ avoids $Z$. Hence $G_\alpha$ is smooth on this interval. Using \eqref{eqn:modulus_integral_form} and Tonelli's theorem, we have
	\[
		\int_{[0,2\pi]\setminus N(Z,2r\delta)} |\Delta_h^r G_\alpha(\theta)| \, d\theta \leq \int_{I_h} \cdots\int_{I_h}  	\int_{[0,2\pi]\setminus N(Z,2r\delta)} |G_\alpha^{(r)}(\theta + t_1 + \cdots + t_r)| \, d\theta dt_1\cdots dt_r
	\]
	For $t_1,\dots,t_r\in I_h$, we have $\theta + t_1+\cdots+t_r \not\in N(Z, r\delta)$ and so
	\[
		\int_{[0,2\pi]\setminus N(Z,2r\delta)} |\Delta_h^r G_\alpha(\theta)| \, d\theta \leq |h|^r \int_{[0,2\pi]\setminus N(Z,r\delta)} |G_\alpha^{(r)}(\theta)| \, d\theta.
	\]
	For $\theta\not\in N(Z, r\delta)$, write $\theta = z + x$ for some $z\in Z$ and $r\delta < |x| \le \pi/2$. Then
	\[
		G_\alpha(\theta) = \sigma_z \sgn(x)|x|^\alpha \left(\frac{\sin x}{x}\right)^\alpha,
	\]
	where $\sigma_{\pi/2}=-1$ and $\sigma_{3\pi/2} = 1$, and $(\sin x)/x$ is understood to be $1$ at $x=0$. It then follows that
	\[
		|G_\alpha^{(r)}(\theta)| \leq C_{\alpha,r} |x|^{\alpha-r}.
	\]
	Integrating, 
	\[
		\int_{[0,2\pi]\setminus N(Z,r\delta)} |G_\alpha^{(r)}(\theta)| \, d\theta \leq 4\int_{r\delta}^{\pi/2} C_{\alpha,r} x^{\alpha-r} dx \leq C_{\alpha,r}' \delta^{\alpha-r+1}.
	\]
	Therefore,
	\[
		\int_{[0,2\pi]\setminus N(Z,2r\delta)} |\Delta_h^r G_\alpha(\theta)| \, d\theta \leq C_{\alpha,r}' \delta^{\alpha+1}.
	\]
	
	The conclusion of the lemma follows by combining the two cases.
\end{proof}

Now we are ready to prove Lemma~\ref{lem:important_approx}.
\begin{proof}[Proof of {Lemma~\ref{lem:important_approx}}]
	Choose an integer $r > \alpha+1$, and let $G_\alpha$, $C_{\alpha,r}$ and $\delta_r$ be as in Lemma~\ref{lem:G_alpha}. Let $K_\alpha = \lceil 1/\delta_r\rceil$, then $1/(K+1) < \delta_r$ whenever $K \geq K_\alpha$.
	
	By the Jackson-type lemma and Lemma~\ref{lem:G_alpha}, there exists a trigonometric polynomial $T_K$ of degree at most $K$ such that 
	\[
		\int_0^{2\pi} |G_\alpha(\theta) - T_K(\theta)| \,d\theta
		\leq C_r \omega_r\left(G_\alpha; \frac{1}{K+1}\right)_{L^1}
		\leq C_{\alpha,r} (K+1)^{-\alpha-1}
		\leq C_\alpha K^{-\alpha-1}.
	\]
	Symmetrize $T_K$ by taking $T_K'(\theta) = (T_K(\theta) + T_K(-\theta))/2$, then $T_K'$ contains only cosine terms. Noticing that $G_\alpha(\theta)$ is an even function, we have
	\begin{align*}
		\int_0^{2\pi} |G_\alpha(\theta) - T_K'(\theta)| \,d\theta &\leq \frac{1}{2}\left(\int_0^{2\pi} |G_\alpha(\theta) - T_K(\theta)| \,d\theta + \int_0^{2\pi} |G_\alpha(\theta) - T_K(-\theta)| \,d\theta\right) \\
		&= \frac{1}{2}\left(\int_0^{2\pi} |G_\alpha(\theta) - T_K(\theta)| \,d\theta + \int_0^{2\pi} |G_\alpha(-\theta) - T_K(-\theta)| \,d\theta\right) \\
		&= \int_0^{2\pi} |G_\alpha(\theta) - T_K(\theta)| \,d\theta \\
		&\leq C_\alpha K^{-\alpha-1}.
	\end{align*}
	Since $\cos(n\theta)$ can be written as a polynomial in $\cos\theta$, we can write $T_K'(\theta) = q_K(\cos\theta)$ for some polynomial $q_K$ of degree at most $K$. Define
	\[
		p_K(t) = \frac{q_K(t) - q_K(-t)}{2}
	\]
	then $p_K$ is an odd polynomial of degree at most $K$.
	
	It follows that
	\begin{align*}
		\int_{-1}^1 \bigl| \sgn(t)|t|^\alpha - p_K(t) \bigr| \, dt 
		&= 
		\int_{-1}^1 \left| \sgn(t)|t|^\alpha - \frac{q_K(t) - q_K(-t)}{2} \right| \, dt \\
		&\leq
		\frac{1}{2}\int_{-1}^1 \bigl| \sgn(t)|t|^\alpha - q_K(t) \bigr| \, dt
		+
		\frac{1}{2}\int_{-1}^1 \bigl| \sgn(t)|t|^\alpha + q_K(-t) \bigr| \, dt \\
		&=
		\int_{-1}^1 \bigl| \sgn(t)|t|^\alpha - q_K(t) \bigr| \, dt \\
		&\leq
		\frac{1}{2}\int_{0}^{2\pi} \bigl| G_\alpha(\theta) - T_K'(\theta) \bigr| \, d\theta \\
		&\leq C_\alpha K^{-\alpha-1}. \qedhere
	\end{align*}
\end{proof}

%!TeX root = main.tex
\section{Packing Lemma}\label{sec:packing}

In this section, we prove Lemma~\ref{lem:packing}, the probability space version of the packing lemma. 
We shall discretize the probability space $\Omega$ by sampling and use the following discrete packing lemma from \cite[Lemma 5.14]{matousek-discrepancy}.
\begin{lemma}\label{lem:packing-matousek}
	Let $(X,\cR)$ be a set system on a $n$-point set. Define the metric on $\cR$ as $\rho(R_1,R_2) = |R_1\triangle R_2|$. Suppose that there exist constants $C_0 > 0$ and $q>1$ such that 
	\[
		\pi_{\cR}(m) \leq C_0 m^q, \qquad \forall m\geq 1.
	\]
	Then there exists a constant $C_{q, C_0} > 0$ such that for every integer $\delta$ with $1\leq \delta\leq n$, every $\delta$-separated family $\cP\subseteq \cR$ satisfies 
	\[
		|\cP| \leq C_{q, C_0} (n/\delta)^{q}.
	\]
\end{lemma}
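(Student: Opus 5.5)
This lemma is the classical packing lemma of Haussler, in the form recorded in \cite[Lemma~5.14]{matousek-discrepancy}. I would reproduce the random-sampling argument with a unit-distance graph, keeping track of constants so that they depend only on $q$ and $C_0$.

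Fix a $\delta$-separated family $\cP\subseteq\cR$. If $\delta$ is comparable to $n$, say $\delta> n/(4q)$, the claim is immediate: $\cP$ has at most $\pi_{\cR}(n)\le C_0 n^q\le C_0(4q)^q(n/\delta)^q$ members. So assume $\delta\le n/(4q)$ and set $s=\lceil 4qn/\delta\rceil\le n$.

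Choose a uniformly random $s$-subset $A\subseteq X$. Since the restriction map $\cP\to\cP|_A$ has image of size at most $\pi_{\cR}(s)\le C_0 s^q$, it suffices to prove
\[
\E\,|\cP|_A|\ \ge\ |\cP|/2 .
\]
Then $|\cP|\le 2C_0 s^q\lesssim_{q,C_0}(n/\delta)^q$.

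To prove this expectation bound, I would write $A=A'\cup\{x\}$, with $A'$ a random $(s-1)$-subset and $x$ a random further point. For each class of sets in $\cP$ that agree on $A'$, at most one extra representative in $\cP|_A$ is gained from the next point. Consider the unit-distance graph $G$ on the vertex set $\cP|_{A}$, where $R,S$ are adjacent if $R\triangle S=\{x\}$ for some $x\in A$. Standard bookkeeping then gives
\[
|\cP|-\E|\cP|_{A'}|\ \lesssim\ \E\bigl[\,\text{\#pairs of $\cP$ separated in }\cP\text{ only by }x\,\bigr].
\]
The right-hand side is controlled in two ways. First, a pair of $\delta$-separated sets is split by the random point $x$ with probability at least $\delta/n$. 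Second, the expected number of edges of the unit-distance graph per vertex is bounded by a constant $D=D(q,C_0)$.

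Combining these two facts yields
\[
\E|\cP|_{A'}|\ \ge\ |\cP|\Bigl(1-\frac{D\,n}{\delta\,s}\Bigr),
\]
and the choice of $s$ (taking the factor $4q$ larger if $D>q$) makes the bracket at least $1/2$.

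The main obstacle is the edge bound: the unit-distance graph of a set system on a finite ground set $Y$ whose restrictions satisfy $\pi(m)\le C_0m^q$ must have at most $D|V|$ edges with $D$ depending only on $q,C_0$. The classical proof (Haussler) obtains $D$ equal to the VC dimension via down-shifting. Shifting preserves the number of sets, does not decrease unit-distance edges, and does not increase the shatter function, because shifting cannot create new shattered traces. The shifted family is hereditary, so each set has at most $|\,$set$\,|$ down-neighbours, and every member has size bounded by the VC dimension $d$. Since $2^d\le C_0d^q$, we get $d\le d_0(q,C_0)$.

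The catch is that this gives $D\asymp d$, and $d$ may exceed $q$. That affects only the constant $C_{q,C_0}$ through the choice of $s$, not the exponent: the exponent $q$ enters solely via $\pi_{\cR}(s)\le C_0s^q$ in the final count. I expect this is exactly how Matoušek's proof is organized, so the last step is to verify this separation of roles carefully.
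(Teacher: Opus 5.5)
The paper gives no proof of this lemma; it quotes Matou\v{s}ek's Lemma~5.14, which is proved by the Haussler--Chazelle random-restriction argument you are reconstructing. Your shifting proof that $|E(\mathrm{UD}(\mathcal S))|\le d|\mathcal S|$, with $d\le d_0(q,C_0)$, is fine. The step you call standard bookkeeping is the core of the argument, and it does not close as you describe it, because it omits the weights.

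Edges of $G=\mathrm{UD}(\cP|_A)$ in direction $x$ correspond bijectively to the classes (members of $\cP$ with a common trace on $A'$) that $x$ splits. They therefore detect only the \emph{number} of non-singleton classes, not the loss $|\cP|-|\cP|_{A'}|=\sum_{\mathcal C}(|\mathcal C|-1)$. For example, if one class contains half of $\cP$, the loss is about $|\cP|/2$, yet at most one edge in direction $x$ appears. Counting pairs of members of $\cP$ split only by $x$ does control the loss up to the factor $n/\delta$. But one edge of $G$ carries $c_1c_2$ such pairs, so this count can be of order $|\cP|^2$, and no bound of the form $D|V(G)|$ controls it.

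The missing idea is to weight each trace $Q\in\cP|_A$ by $w(Q)=|\{P\in\cP:P\cap A=Q\}|$ and each edge $\{Q,Q'\}$ by $\min\{w(Q),w(Q')\}$.
\begin{itemize}
\item \textbf{Lower bound.} A class of size $c$ split as $c_1+c_2$ contributes $\min(c_1,c_2)\ge c_1c_2/c$. Any two of its members differ in at least $\delta$ points outside $A'$, so $\E_x[c_1c_2]\ge\frac{\delta}{n}\binom{c}{2}$. Summing over classes, the expected weight in direction $x$ is at least $\frac{\delta}{2n}\bigl(|\cP|-\E|\cP|_{A'}|\bigr)$.
\item \textbf{Upper bound.} You need the edge bound for \emph{every} subfamily of $\cP|_A$, which holds because VC-dimension is hereditary. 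Then $G$ is $2d$-degenerate, so it can be oriented with out-degrees at most $2d$. Charging each edge to its tail bounds the total weight by $2d\sum_Q w(Q)=2d|\cP|$. Since $x$ is uniform in $A$ given $A$, the expected weight in direction $x$ is at most $2d|\cP|/s$.
\end{itemize}
Together these give $|\cP|-\E|\cP|_{A'}|\le\frac{4dn}{\delta s}|\cP|$, and the choice $s=\lceil 8dn/\delta\rceil$ finishes the argument.

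Separately, your case split is backwards. The inequality $C_0n^q\le C_0(4q)^q(n/\delta)^q$ holds if and only if $\delta\le 4q$, not when $\delta>n/(4q)$. Likewise, $s=\lceil 4qn/\delta\rceil\le n$ holds only when $\delta\ge 4q$, so it fails for $\delta=1$, for instance. When $\delta$ is of order $n$, the lemma asserts $|\cP|=O(1)$, which does not follow from $\pi_{\cR}(n)\le C_0n^q$. That regime has to go through the sampling argument, with $s$ bounded. It is the regime of $\delta$ below a constant (about $8d$ with the choice of $s$ above) that the trivial bound $|\cP|\le\pi_{\cR}(n)$ handles.
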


Now we are ready to state the proof of Lemma~\ref{lem:packing}.

\begin{proof}[Proof of Lemma~\ref{lem:packing}]
	Without loss of generality, assume that $\cP$ is finite and $|\cP|\geq 2$. Let
	$M=\lceil C|\cP|/\delta\rceil$, where $C > 0$ is a sufficiently large absolute
	constant. Choose independent samples $\omega_1,\dots,\omega_M\in \Omega$ and
	define, for each $R\in\cR$, the sampled set
	$\hat{R} = \{j\in [M]: \omega_j\in R\}$. This induces a finite set family
	$([M], \hat{\cR})$ satisfying
	\[
		\pi_{\hat{\cR}}(m)\leq \pi_{\cR}(m) \leq C_0 m^q,\quad \forall m\geq 1.
	\]
	
	Suppose that $R_1,R_2\in \cR$. Then
	\[
		\mu(R_1\triangle R_2) = \frac{\E|\hat{R}_1 \triangle \hat{R}_2|}{M}.
	\]
	For distinct $R_1,R_2\in\cP$, by a Chernoff bound,
	\[
		\Pr_{\omega_1,\dots,\omega_M}\left\{ |\hat{R}_1 \triangle \hat{R}_2| \leq \frac{\delta M}{2} \right\} < e^{-c\delta M}
	\]
	for some absolute constant $c>0$. Taking a union bound over at most
	$|\cP|^2$ pairs $(R_1,R_2)$ and choosing $C$ sufficiently large, we obtain
	that
	\[
		 |\hat{R}_1 \triangle \hat{R}_2| > \frac{\delta M}{2}
	\]
	with positive probability for all distinct $\hat{R}_1, \hat{R}_2\in
	\hat{\cP}$. Thus there exist $\omega_1,\dots,\omega_M$ such that
	$|\hat{\cP}| = |\cP|$. Setting $\delta' = \lfloor\delta M/2\rfloor$, we have
	$1\leq\delta'\leq M$ and $\hat{\cP}$ is $\delta'$-separated. By
	Lemma~\ref{lem:packing-matousek},
	\[
		|\cP|=|\hat{\cP}|\leq C_{q, C_0}
		\left(\frac{M}{\delta'}\right)^{q}
		\leq C'_{q,C_0}\delta^{-q}. \qedhere
	\]
\end{proof}

%!TeX root = main.tex
\section{\texorpdfstring{Embedding $\ell_2^d$}{Embedding l2 in dimension d}}\label{sec:ball}

Let $\sigma_{d-1}$ be the uniform probability measure on $\Sph^{d-1}$ and
\[
	m_{d,p} = \int_{\Sph^{d-1}} |\ip{u}{y}|^p d \sigma_{d-1}(y),
\]
which is a constant independent of $u\in\Sph^{d-1}$.

We shall prove the following theorem for embedding $\ell_2^d$ by partitioning spheres and concentration inequalities, following the approach developed in \cite{BL88}. The main additional ingredients are cubature exactness for higher-degree polynomials, and polynomial approximation of $|t|^p$ on cells away from the equator, where the approximation error decays exponentially with the degree.

\begin{theorem}\label{thm:ball}
	Suppose that $d\geq 2$ and $p\geq 1$. There exist constants $C_{d,p},B_{d,p}>0$ such that, for every $\eps\in (0,1/2)$, there are points $z_1,\dots,z_N\in \Sph^{d-1}$ with positive weights $w_1,\dots,w_N$ such that $\sum_i w_i = 1$ and
	\[
		\sup_{u\in \Sph^{d-1}} \left| \sum_{i=1}^N w_i |\ip{u}{z_i}|^p - m_{d,p} \right|\leq \eps
	\]
	with
	\[
		N\leq C_{d,p}\left(\frac{1}{\eps}\right)^{\frac{2(d-1)}{d+2p}}\left(\log\frac{1}{\eps}\right)^{B_{d,p}}.
	\]
\end{theorem}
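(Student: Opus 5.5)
The plan is to deduce Theorem~\ref{thm:ball} from Theorem~\ref{thm:additive}, which already gives a logarithm-free bound. The only missing ingredient is a finitely supported probability measure $\mu$ that approximates $\sigma_{d-1}$ uniformly on the test functions $y\mapsto|\ip{u}{y}|^p$. The size of its support $M$ plays no role in the final count, so it can be taken as large as we like.

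\emph{Step 1: discretize $\sigma_{d-1}$.} Take an equal-area partition of $\Sph^{d-1}$ into $M$ measurable cells $\Omega_1,\dots,\Omega_M$, each of $\sigma_{d-1}$-measure $1/M$ and of diameter at most $C_d M^{-1/(d-1)}$. Such partitions are standard; if one prefers not to quote this, any partition into cells of small diameter will do, with weights $\rho_i=\sigma_{d-1}(\Omega_i)$, since Theorem~\ref{thm:additive} allows arbitrary positive weights. Pick $z_i\in\Omega_i$ and set $\mu=\sum_i\rho_i\delta_{z_i}$. For $t,s\in[-1,1]$ we have $\bigl||t|^p-|s|^p\bigr|\le p|t-s|$, so $y\mapsto|\ip{u}{y}|^p$ is $p$-Lipschitz on the sphere, uniformly in $u$. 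Hence
\[
\sup_{u\in\Sph^{d-1}}\Bigl|\int|\ip{u}{y}|^p\,d\mu(y)-m_{d,p}\Bigr|\le \sum_i\int_{\Omega_i}\bigl||\ip{u}{z_i}|^p-|\ip{u}{y}|^p\bigr|\,d\sigma_{d-1}(y)\le p\,\max_i\diam\Omega_i .
\]
We choose $M$ large enough, depending on $\eps$, $d$ and $p$, that this quantity is at most $\eps/2$.

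\emph{Step 2: sparsify.} Apply Theorem~\ref{thm:additive} to $\mu$ with accuracy $\eps/2$. This yields a probability measure $\nu=\sum_{i=1}^N w_i\delta_{y_i}$ with positive weights and $N\le C_{d,p}(2/\eps)^{2(d-1)/(d+2p)}$, whose $|\ip{u}{\cdot}|^p$-integrals are uniformly within $\eps/2$ of those of $\mu$. The triangle inequality then gives the required estimate with total error $\eps$. The support bound is even stronger than claimed, since $(\log(1/\eps))^{B_{d,p}}\ge 1$ for small $\eps$; the range $\eps$ close to $1/2$ is absorbed into the constant. No step here is a genuine obstacle; the only point to verify is that the constants in Theorem~\ref{thm:additive} do not depend on $M$, which is how it is stated.

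If instead a self-contained Bourgain--Lindenstrauss style argument is intended, I would proceed as follows. Partition the sphere into $N$ equal-measure cells of diameter $h\asymp N^{-1/(d-1)}$, choose one random point per cell with weight $1/N$, and symmetrize each cell's contribution against a local cubature rule that is exact for polynomials of degree $D\asymp\log(1/\eps)$. For a fixed $u$, the cells at distance $\gtrsim h$ from the equator $u^\perp$ contribute only an exponentially small error in $D$: on such a cell, $|t|^p$ is analytic in a neighbourhood of the range of $\ip{u}{\cdot}$ and is approximated by its Taylor polynomial. The roughly $N^{(d-2)/(d-1)}$ cells meeting the $h$-band around the equator each contribute an independent mean-zero error of size $O(h^p/N)$. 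Bernstein's inequality, combined with a union bound over a polynomial-size net of directions $u$, then gives an error of $\sqrt{N^{(d-2)/(d-1)}}\,h^p N^{-1}\sqrt{\log N}$. Setting this equal to $\eps$ recovers the exponent $2(d-1)/(d+2p)$ up to logarithmic factors. In this route, the step I expect to be the main obstacle is the exact-cubature local construction on cells together with the handling of cells straddling the near-equatorial region. Given that Theorem~\ref{thm:additive} is available, however, I would use the two-step reduction above.
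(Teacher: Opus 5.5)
Your two-step reduction is correct, and it takes a genuinely different route from the paper's. Theorem~\ref{thm:additive} holds for every finitely supported probability measure, with $C_{d,p}$ independent of $M$. Its proof uses nothing from the appendices, so there is no circularity. Discretizing $\sigma_{d-1}$ to uniform accuracy $\eps/2$ via the $p$-Lipschitz bound (discarding null cells so that all $\rho_i>0$), and then sparsifying at accuracy $\eps/2$, proves the statement, in fact without the logarithmic factor.

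The paper instead proves Theorem~\ref{thm:ball} directly and independently of its main machinery:
\begin{enumerate}
\item It partitions $\Sph^{n}$ into $N$ equal-area cells of diameter $O(N^{-1/n})$.
\item On each cell it places an independent random cubature from Lemma~\ref{lem:cubature}, built via Choquet's theorem. This cubature is almost surely exact on $\cP_K(\Sph^n)$ with $K\asymp\log N$, and unbiased for continuous functions.
\item It bounds the error on cells away from $u^\perp$ by exponentially accurate polynomial approximation of $|t|^p$.
\item It handles the $O(\eta^{-(n-1)})$ near-equator cells by Hoeffding's inequality plus a union bound over an $\eps/(4p)$-net.
\end{enumerate}

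Your route buys brevity and a sharper bound, but it rests on the full weight of low-crossing matchings, Rothvoss's partial colouring and the chaining argument. The paper's route is elementary and self-contained, and that is the point of the appendix. It records what the older Bourgain--Lindenstrauss partition-and-concentration method yields on its own, for comparison with the main result. It also shows where that method's logarithmic losses arise: the $\dim\cP_K(\Sph^n)\asymp\log^n N$ atoms per cell, and the $\sqrt{\log(1/\zeta)}$ from the net.

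One remark on your fallback sketch. As written, a single random point per cell cannot be exact on $\cP_D$. It fails even on linear functions, since the normalized barycentre of a cell of positive diameter lies strictly inside the ball. What is needed is a random measure on each cell that is simultaneously exact and unbiased, which is exactly Lemma~\ref{lem:cubature}. Its $O(D^{n})$ atoms per cell are what produce the logarithmic factor in $N$.
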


\subsection{An auxiliary cubature lemma}

\begin{lemma}\label{lem:cubature}
	 Let $X\subseteq \Sph^n$ be compact, let $\mu$ be a finite positive Borel measure supported on $X$, and let $V\subseteq C(X)$ be a finite-dimensional subspace containing the constants.
	 
	 There exists a random positive atomic measure $\nu$ which satisfies the following properties:
	 \begin{enumerate}[label=(\roman*),itemsep=0pt]
	 	\item $|\supp(\nu)| \leq \dim V$;
	 	\item $\nu$ and $\mu$ are exact on $V$ almost surely, i.e.
	 	\[
	 		\int_X q(y) d\nu(y) = \int_X q(y) d\mu(y),\qquad \forall q\in V
	 	\]
	 	almost surely;
	 	\item for every continuous function $f\in C(X)$,
	 	\[
	 		\E_\nu \int_X f(y)d\nu(y) = \int_X f(y)d\mu(y).
	 	\]
	 \end{enumerate}
\end{lemma}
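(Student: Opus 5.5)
We will realize $\nu$ as a random vertex of a polytope of measures, using the classical Carathéodory/Tchakaloff idea with a randomized choice of vertex. Let $V=\operatorname{span}\{q_1,\dots,q_s\}$ with $s=\dim V$ and $q_1\equiv 1$. Consider the moment map $\Phi:X\to\R^s$, $\Phi(y)=(q_1(y),\dots,q_s(y))$. The target moment vector is $b=\int_X\Phi\,d\mu$.

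\textbf{Step 1: exact decomposition of $\mu$.} We first write $\mu$ as a mixture $\mu=\int \nu_\omega\,dP(\omega)$ of atomic measures $\nu_\omega$. Each $\nu_\omega$ should have at most $s$ atoms and the same $V$-moments as $\mu$. Condition (iii) is exactly this mixture identity tested against $f\in C(X)$. Condition (ii) says every component has moments $b$, and (i) says every component is supported on at most $s$ points.

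\textbf{Step 2: reduce to finitely supported $\mu$.} When $\mu=\sum_{j=1}^M c_j\delta_{x_j}$ is finitely supported, I would argue by induction on the support size $M$. If $M\le s$, take $\nu=\mu$ deterministically. Otherwise the vectors $\Phi(x_j)$, $j=1,\dots,M$, are linearly dependent. Hence there is a nonzero $\lambda\in\R^M$ with $\sum_j\lambda_j\Phi(x_j)=0$. Because $q_1=1$, this forces $\sum_j\lambda_j=0$, so $\lambda$ has entries of both signs.

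\textbf{Step 3: the randomized step.} Move along the line $c+t\lambda$ for $t\in[t_-,t_+]$, where $t_-<0<t_+$ are the first values at which some coordinate hits zero. Set $\mu_\pm=\sum_j(c_j+t_\pm\lambda_j)\delta_{x_j}$. Both $\mu_\pm$ are positive, have support size at most $M-1$, and have the same $V$-moments as $\mu$. Choose $\mu_+$ with probability $-t_-/(t_+-t_-)$ and $\mu_-$ otherwise. Then $\E\mu_\pm=\mu$ exactly, as measures. Iterating, with the randomness at each stage independent of later stages, gives a finite-step martingale of measures. It terminates at a random measure with support of size at most $s$ and exact moments, and by the tower property its expectation equals $\mu$. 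This proves the finitely supported case.

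\textbf{Step 4: general $\mu$ (main obstacle).} For general compact $X$ and arbitrary $\mu$, the main obstacle is that Step 3 is not available directly. I would first approximate $\mu$ by finitely supported measures $\mu_\ell$ with exactly the same $V$-moments and with $\mu_\ell\to\mu$ weakly. Tchakaloff/Carathéodory gives exact moments for a fixed $V$ but not weak convergence, so I would instead take a random representation directly. Let $\mu_\ell$ be the empirical measure of $\ell$ i.i.d.\ samples from $\mu/\mu(X)$, multiplied by $\mu(X)$; then $\E\mu_\ell=\mu$. Its moments match only approximately, so I would correct it by combining with an exact cubature in a slightly enlarged cone.

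A cleaner route is to work with a mixture over $\mu$-samples. Choose $\ell\ge s$ i.i.d.\ points $x_1,\dots,x_\ell\sim\mu/\mu(X)$. Note that $b$ lies in the interior of the cone generated by $\Phi(X_\mu)$, where $X_\mu$ is the support. Therefore, for $\ell$ large, with positive probability the sampled points have cone hull containing $b$. One then uses a conditional-expectation (disintegration) argument to get exact unbiasedness.

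If that becomes too delicate, I would use the alternative choice-theorem route. The set $\mathcal C$ of positive measures on $X$ with $V$-moments $b$ is a compact convex set in the weak-$*$ topology. Its extreme points are exactly the atomic measures with at most $s$ atoms, since extreme points of a moment slice have affinely independent moment vectors. By Choquet's theorem (metrizable compact convex set), $\mu\in\mathcal C$ is the barycenter of a probability measure $P$ supported on the extreme points. Drawing $\nu\sim P$ then gives (i)--(iii) directly. This Choquet argument is the version I would actually write, with Steps 2--3 serving as the constructive finite case.
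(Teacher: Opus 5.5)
Your final Choquet argument is essentially the paper's proof. The paper likewise takes the weak-$*$ compact convex set of (normalized) measures with the prescribed $V$-moments, shows its extreme points have at most $\dim V$ support points, and draws $\nu$ from a Choquet representing measure for $\mu$. Your Steps 2--3 are a correct constructive alternative, but they cover only finitely supported $\mu$, whereas the appendices apply the lemma to non-atomic $\mu$.

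The one step you should spell out is why an extreme point is finitely supported at all: your phrase about moment vectors of its atoms already assumes it is atomic. The paper handles this by restricting the measure to $s+1$ disjoint Borel sets of positive mass and perturbing along a linear dependence of their moment vectors. Also, only the implication ``extreme $\Rightarrow$ at most $s$ atoms'' is needed. The ``exactly'' in your characterization is false in general, e.g.\ for two atoms with equal moment vectors.
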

\begin{proof}
	If $\mu(X)=0$, take $\nu=0$. Otherwise, by scaling, it suffices to prove the result when $\mu(X) = 1$. Suppose that $s=\dim V$ and choose a basis $1,q_2,\dots,q_s$ of $V$. Let $\Sigma$ be the set of all probability measures $\sigma$ on $X$ satisfying
	\[
		\int_X q_j(y) d\sigma(y) = \int_X q_j(y) d\mu(y), \quad \forall j=2,\dots,s.
	\]
	It is clear that $\Sigma\neq\emptyset$ since $\mu\in \Sigma$. One can also verify that $\Sigma$ is convex and that it is compact in weak-$^*$ topology.
	
	We claim that every extreme point of $\Sigma$ is supported on at most $s$ points. Suppose otherwise. Then there exists an extreme point $\sigma\in\Sigma$ with $|\supp(\sigma)| > s$. We can find $s+1$ mutually disjoint Borel subsets $A_1,\dots,A_{s+1}\subset X$ (by taking small neighbourhoods of points in $\supp(\sigma)$) with $\sigma(A_i) > 0$ for all $i=1,\dots,s+1$. Define
	\[
		u_i = \left(\int_{A_i} 1 d\sigma(y), \int_{A_i} q_2(y) d\sigma(y), \dots, \int_{A_i} q_s(y) d\sigma(y) \right)\in \R^{s},
	\]
	then $u_1,\dots,u_{s+1}$ are $s+1$ vectors in $\R^s$ and must be linearly dependent, that is, there exist $\alpha_1,\dots,\alpha_{s+1}$ such that $\sum_i \alpha_i u_i = 0$. Let
	\[
		\tau = \sum_{i=1}^{s+1} \alpha_i \sigma|_{A_i}.
	\]
	Then $\int_{X} q_i(y) d\tau(y) = 0$ for all $i$, which implies that
	\[
		\int_{X} q(y) d\tau(y) = 0,\qquad \forall q\in V.
	\]
	For sufficiently small $\eps > 0$, both $\sigma+\eps\tau$ and $\sigma-\eps\tau$ are probability measures in $\Sigma$, contradicting the extremality of $\sigma$. This proves our claim.
	
	Now, by Choquet's Theorem, for $\mu\in\Sigma$, there exists a probability distribution $\cD$ on the extreme points of $\Sigma$ such that its expectation is $\mu$ in the weak-$^\ast$ sense. Let $\nu$ be drawn from $\mathcal{D}$. Then, $|\supp(\nu)|\leq s$ by our claim above. This proves property (i). Property (ii) follows from the fact that $\nu\in\Sigma$. Finally, since $\mu$ is the mean of $\cD$ in the weak-$^\ast$ sense, it holds automatically that 
	\[
		\int_X f(y)d\mu(y) = \E_{\nu\sim \cD} \int_X f(y)d\nu(y),\qquad f\in C(X).
	\]
	The proof of the lemma is complete.
\end{proof}

\subsection{Proof of Theorem~\ref{thm:ball}}

We shall also need the following result on polynomial approximation of analytic functions.

\begin{lemma}[{\cite[p.~280]{timan}}]\label{lem:approximation_analytic}
	Suppose that $a<b$, $\rho > (b-a)/2$ and $f$ is analytic in the ellipse $E_{\rho}(a,b)$ with foci $a,b$ and sum of semiaxes equal to $\rho$. Then for every
	\[
		q\in \left(\frac{b-a}{2\rho}, 1\right),
	\]
	there exists $C_{f,q}>0$ such that for every $K\geq 1$, there is a polynomial $p_K$ of degree at most $K$ satisfying
	\[
		\norm{f-p_K}_{L^\infty[a,b]}\leq C_{f,q}q^K.
	\]
\end{lemma}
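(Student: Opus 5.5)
This is a classical result, so the natural route is the standard one: map the ellipse to an annulus and use Faber/Chebyshev expansions. First, rescale affinely so that $[a,b]$ becomes $[-1,1]$. Under $x=\frac{a+b}{2}+\frac{b-a}{2}t$, the ellipse $E_\rho(a,b)$ becomes the Bernstein ellipse $E_R$ with foci $\pm1$ and semiaxis sum $R=2\rho/(b-a)>1$. A polynomial in $t$ of degree $K$ is a polynomial in $x$ of degree $K$, so the statement reduces to this: if $g$ is analytic in $E_R$, then for every $q\in(1/R,1)$ there is $C$ with $\inf_{\deg P\le K}\|g-P\|_{L^\infty[-1,1]}\le Cq^K$. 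Here we read "analytic in the ellipse" as analytic on the open interior region $E_R$.

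Next, choose $R'$ with $1<R'<R$ and $1/R'<q$, for instance $R'=1/q$. Then $g$ is analytic on a neighbourhood of the closed ellipse $\overline{E_{R'}}$, hence bounded there, say by $M$. Expand $g$ in Chebyshev polynomials, $g=\sum_k c_kT_k$, and set $h(w)=g\bigl(\tfrac12(w+w^{-1})\bigr)$. The Joukowski map carries the annulus $1/R<|w|<R$ onto $E_R$ minus the segment, and $h$ is analytic on this annulus. The Laurent coefficients of $h$ are $c_k/2$ (symmetric in $k$). Cauchy's estimate on the circle $|w|=R'$ gives $|c_k|\le 2M R'^{-k}$.

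Then take $P_K=\sum_{k\le K}c_kT_k$. Since $|T_k|\le1$ on $[-1,1]$,
\[
\|g-P_K\|_\infty\le \sum_{k>K}2MR'^{-k}=\frac{2M}{R'-1}R'^{-K}.
\]
With $R'=1/q$ this is $C_{f,q}q^K$, which holds for all $K\ge1$. This is the desired bound.

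The only delicate point is the analyticity of $h$ on the annulus, in particular across $|w|=1$, which maps onto $[-1,1]$. This is fine because $g$ is analytic on the whole interior of the ellipse, and that interior contains the segment $[-1,1]$. Uniqueness of the Laurent expansion then identifies its coefficients with the Chebyshev coefficients, since on $|w|=1$ we have $h(e^{i\theta})=g(\cos\theta)=\sum c_k\cos k\theta$.
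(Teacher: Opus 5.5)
Your proof is correct and is the standard Bernstein argument (affine reduction to $[-1,1]$, Chebyshev expansion, and Cauchy estimates on $|w|=R'$ for the Laurent coefficients of $g\bigl(\tfrac12(w+w^{-1})\bigr)$), which is the classical proof behind the cited page of Timan; the paper gives no proof of its own beyond that citation. One cosmetic slip: the full annulus $1/R<|w|<R$ is mapped two-to-one onto all of $E_R$, segment included (only each half-annulus maps onto $E_R\setminus[-1,1]$), and that is exactly the fact your last paragraph uses.
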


Now we present the proof of Theorem~\ref{thm:ball}.
\begin{proof}[Proof of Theorem~\ref{thm:ball}]
	As before, we set $n = d-1$.
	
	Let $N$ be an integer to be fixed later, and set $\eta \asymp N^{-1/n}$. We write $\Sph^n=Q_1\cup\cdots\cup Q_N$, where the $Q_j$ are compact connected sets satisfying $\sigma_n(Q_j) = 1/N$ and $\diam(Q_j)\leq c_n \eta$ for every $j=1,\dots,N$, and distinct cells may intersect on sets of zero $\sigma_n$-measure.
	
	Let $K \asymp \log N$. For each $Q_j$, independently apply Lemma~\ref{lem:cubature} to $\sigma_n|_{Q_j}$ and $V = \cP_K(\Sph^n)$. This yields a random measure $\nu_j$ on $Q_j$ such that $\nu_j(Q_j) = \sigma_n(Q_j)$, and
	\[
		\int_{Q_j} q(y)d\nu_j(y) = \int_{Q_j} q(y)d\sigma_{n}(y),\quad \forall q\in \cP_K(\Sph^n)
	\]
	almost surely and
	\[
		\E \int_{Q_j} f(y) d\nu_j(y) = \int_{Q_j} f(y)d\sigma_{n}(y),\quad \forall f\in C(Q_j).
	\]
	Define
	\[
		\nu = \sum_{j=1}^N \nu_j,
	\]
	then $\nu(\Sph^n) = 1$, so $\nu$ is a probability measure on $\Sph^n$, and
	\[
		|\supp(\nu)| \leq N \dim \cP_K(\Sph^n) \leq C_n N K^n = O_{n,p}(N \log^n N).
	\]
	
	Now we investigate the approximation error. Fix $u\in \Sph^n$. First, consider the cells $Q_j$ away from the equator $u^\perp$, that is,
	\[
		|\ip{u}{y}| > \eta,\qquad \forall y\in Q_j.
	\]
	Then the interval $I_j=\{\ip{u}{y}: y\in Q_j\}$
	has length at most $C_n\eta$ and is at distance at least $\eta$ from $0$. If $I_j$ is a singleton, take $p_j$ to be the constant value of $|\cdot|^p$ on $I_j$. 
	Otherwise, under a linear transformation, the problem reduces to approximating $s\mapsto s^p$ on the fixed interval $[1,1+C_n]$. This function has an analytic extension to the ellipse $E_{\rho_n}(1,1+C_n)$ for some $\rho_n > C_n/2$ depending only on $n$. Lemma~\ref{lem:approximation_analytic}, followed by the inverse transformation, gives a polynomial $p_j$ of degree at most $K$ satisfying
	\[
		\norm{ |\ip{u}{\cdot}|^p - p_j(\ip{u}{\cdot}) }_{L^\infty(Q_j)} \leq C_{n,p} e^{-c_{n,p}K},
	\]
	where $C_{n,p}$ and $c_{n,p}$ are constants independent of $u$ and $Q_j$. By the exactness of $\nu_j$, 
	\[
		\int_{Q_j} p_j(\ip{u}{y}) d\nu_j(y) = \int_{Q_j} p_j(\ip{u}{y}) d\sigma_n(y),
	\]
	so on each cell $Q_j$,
	\[
		\left| \int_{Q_j} |\ip{u}{y}|^p d\nu_j(y) - \int_{Q_j} |\ip{u}{y}|^p d\sigma_n(y) \right| \leq \frac{C_{n,p}}{N}e^{-c_{n,p}K}.
	\]
	Therefore, summing the cellwise errors over all those cells $Q_j$, the approximation error is bounded by
	\begin{equation}\label{eqn:far_error}
		C_{n,p} e^{-c_{n,p}K} \leq C_0\eta^{\frac{n+1}{2}+p},
	\end{equation}
	by choosing the constant in $K\asymp \log N$ sufficiently large and the constant in $\eta\asymp N^{-1/n}$ sufficiently small.
	
	Next we consider the cells around the equator. On these cells $Q_j$ we have $|\langle u,y\rangle|\leq C_n\eta$ for all $y\in Q_j$, where $C_n>0$ depends only on $n$. Define a random variable
	\[
		X_{u,j} = \int_{Q_j} |\ip{u}{y}|^p d\nu_j(y) - \int_{Q_j} |\ip{u}{y}|^p d\sigma_n(y),
	\]
	then $\E X_{u,j} = 0$,
	\[
		|X_{u,j}|\leq 2C_n^p\eta^p \cdot \sigma_n(Q_j) = \frac{2C_n^p\eta^p}{N} =: L,\qquad  \Var(X_{u,j}) = \E X_{u,j}^2 \leq L^2.
	\]
	There are at most $J:=C'_n\eta^{-(n-1)}$ near-equator cells.
	Summing up the error $X_{u,j}$ over all these cells, by Hoeffding's inequality,
	\[
		\Pr\left\{ \left| \sum_j X_{u,j} \right| > t \sqrt{J}L \right\}
		\leq 2e^{-t^2/8}.
	\]
	
	Let $\cN$ be an $\zeta$-net in $\Sph^n$ with $|\cN|\leq (C_n/\zeta)^n$, where $\zeta>0$ is a parameter to be determined. Taking $t\asymp_n \sqrt{\log(1/\zeta)}$, we have
	\[
		\left(\frac{C_n}{\zeta}\right)^n \cdot 2e^{-\frac{t^2}{8}} < 1,
	\]
	so there exists $\nu$ such that 
	\begin{equation}\label{eqn:near_error}
		 \left| \sum_j X_{u,j} \right|\leq t\sqrt{J}L \leq C_1\eta^{\frac{n+1}{2}+p}\sqrt{\log(1/\zeta)}
	\end{equation}
	for all $u\in\cN$, where $C_1$ depends only on $n,p$.
	
	Choose $\eta$ and $\zeta$ such that 
	\begin{equation}\label{eqn:eta_constraint}
		(C_0+C_1)\eta^{\frac{n+1}{2}+p}\sqrt{\log(1/\zeta)} < \frac{\eps}{2}.
	\end{equation}
	Then it follows from \eqref{eqn:far_error} and \eqref{eqn:near_error} that for all $u\in \cN$,
	\[
		\left| \int_{\Sph^{n}} |\ip{u}{y}|^p d\nu(y) - \int_{\Sph^{n}} |\ip{u}{y}|^p d\sigma_n(y) \right| < \frac{\eps}{2}.
	\]
	For a general $u\in\Sph^n$, choose $u'\in \cN$ with $\norm{u-u'}_2\leq \zeta$. Since
	\[
		\left| |\ip{u}{y}|^p - |\ip{u'}{y}|^p \right| \leq p\norm{u-u'}_2,
	\]
	we have
	\begin{align*}
		\left| \int_{\Sph^{n}} |\ip{u}{y}|^p d\nu(y) - \int_{\Sph^{n}} |\ip{u}{y}|^p d\sigma_n(y) \right| &\leq 
		\left| \int_{\Sph^{n}} |\ip{u'}{y}|^p d\nu(y) - \int_{\Sph^{n}} |\ip{u'}{y}|^p d\sigma_n(y) \right| + 2p\zeta \\
		&< \frac{\eps}{2} + \frac{\eps}{2} = \eps,
	\end{align*}
	provided that
	\[
		\zeta = \frac{\eps}{4p}.
	\]
	Plugging in the value of $\zeta$ into \eqref{eqn:eta_constraint}, we see $\eta$ can be chosen as
	\[
		\eta \asymp_{n,p} \left(\frac{\eps}{\sqrt{\log(1/\eps)}}\right)^{2/(2p+n+1)}.
	\]	
	Consequently,
	\begin{align*}
		|\supp(\nu)| &\lesssim_{n,p} N \log^n N \\
		&\asymp_n \left(\frac{1}{\eta}\log \frac{1}{\eta} \right)^n \\
		&\lesssim_{n,p} \left(\frac{1}{\eps}\right)^{\frac{2n}{n+2p+1}}\left(\log\frac{1}{\eps}\right)^{B_{d,p}}.
	\end{align*}

\end{proof}

%!TeX root = main.tex
\section{Sparsification based on sphere partitioning}\label{sec:general_bourgain}

We prove the following theorem in this section.
\begin{theorem}\label{thm:general_bourgain}
	Suppose that $d\geq 2$ and $p\geq 1$. There exist constants $C_{d,p},B_{d,p}>0$ such that, for every $\eps\in (0,1/2)$ and every probability measure $\mu$ on $\Sph^{d-1}$, there is a positive atomic probability measure
	\[
		\nu = \sum_{i=1}^N w_i \delta_{\xi_i},\qquad \xi_i\in\Sph^{d-1},\quad w_i > 0, 
	\]
	such that
	\[
		\sup_{u\in \Sph^{d-1}} \left| \int_{\Sph^{d-1}} |\ip{u}{y}|^p d\nu(y) - \int_{\Sph^{d-1}} |\ip{u}{y}|^p d\mu(y)  \right|\leq \eps
	\]
	with
	\[
		N\leq C_{d,p}\left(\frac{1}{\eps}\right)^{\frac{2(d-1)}{d+2p-1}}\left(\log\frac{1}{\eps}\right)^{B_{d,p}}.
	\]
\end{theorem}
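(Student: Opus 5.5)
Following the approach of Appendix~\ref{sec:ball}, the plan is to partition the sphere into cells, but now with cells of \emph{equal $\mu$-mass} rather than equal area. Because $\mu$ is arbitrary, we cannot control both the mass and the diameter of every cell simultaneously. This is exactly what costs the $-1$ in the denominator of the exponent.

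Set $n=d-1$ and let $\eta\asymp N^{-1/n}$. First reduce to a finitely supported $\mu$ by weak-$^*$ approximation; the error is uniform in $u$ because the functions $y\mapsto|\ip{u}{y}|^p$ form an equicontinuous family. Next partition $\Sph^n$ into about $\eta^{-n}\asymp N$ geometric cells $Q$ of diameter at most $c_n\eta$. Subdivide each $Q$ further into pieces of $\mu$-mass at most $1/N$; one atom may be split across pieces. This yields at most $2N$ pieces $P_j$, each of diameter at most $c_n\eta$ and mass $\mu(P_j)\le 1/N$.

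On each piece, apply Lemma~\ref{lem:cubature} to $\mu|_{P_j}$ with $V=\cP_K(\Sph^n)$, where $K\asymp\log N$. This gives independent random measures $\nu_j$ that are exact on $V$ and have mean $\mu|_{P_j}$. Set $\nu=\sum_j\nu_j$, so that $|\supp\nu|\lesssim N\log^n N$.

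Now fix $u$. On pieces with $|\ip{u}{y}|>\eta$ throughout, the analytic approximation of Lemma~\ref{lem:approximation_analytic} gives a total error of $e^{-cK}\le\eta^{\mathrm{big}}$, exactly as in \eqref{eqn:far_error}. The near-equator pieces lie inside the band $\cB(u,C_n\eta)$. For such a piece, let $X_{u,j}$ be its error. Then $X_{u,j}$ has mean zero, satisfies $|X_{u,j}|\le 2(C_n\eta)^p\mu(P_j)\le 2C_n^p\eta^p/N$, and the $X_{u,j}$ are independent.

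The main obstacle is bounding the \emph{number} $J$ of near-equator pieces. There are only $O(\eta^{-(n-1)})$ geometric cells meeting the band, but $\mu$ may concentrate mass there, producing many small pieces. Two bounds are available. The number of pieces is at most $\eta^{-(n-1)}+N\mu(\cB(u,C_n\eta))\le\eta^{-(n-1)}+N$. Alternatively, we can avoid counting altogether and use the variance form of Hoeffding's inequality: $\sum_j|X_{u,j}|^2\le(\eta^p/N)\sum_j\eta^p\mu(P_j)\lesssim\eta^{2p}/N$. This gives a deviation of $\eta^p N^{-1/2}\sqrt{\log(1/\zeta)}$, which is $\eta^{p+n/2}$ up to a logarithmic factor, since $N\asymp\eta^{-n}$.

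Compare this with the benchmark from Appendix~\ref{sec:ball}, where $\mu$ was uniform and the bound was $\eta^{p+(n+1)/2}$. That exponent yielded the denominator $n+2p+1=d+2p$, whereas the present exponent $\eta^{p+n/2}$ yields $N\asymp\eta^{-n}\asymp\eps^{-2n/(n+2p)}$, that is, denominator $d+2p-1$. This matches the claimed exponent $2(d-1)/(d+2p-1)$.

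To finish, union-bound over a $\zeta$-net of $\Sph^n$ with $\zeta=\eps/(4p)$, which costs a factor $\sqrt{\log(1/\eps)}$. Then extend to all $u$ using the Lipschitz bound $\bigl||\ip{u}{y}|^p-|\ip{u'}{y}|^p\bigr|\le p\norm{u-u'}_2$. Solving $\eta^{p+n/2}\sqrt{\log(1/\eps)}\asymp\eps$ for $\eta$, and accounting for the $\log^n N$ factor in the support size, gives
\[
N\lesssim \eps^{-\frac{2(d-1)}{d+2p-1}}\polylog(1/\eps),
\]
as claimed.
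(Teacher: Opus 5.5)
Your proposal is correct and follows essentially the same route as the paper: cells of diameter $O(\eta)$ are refined into at most $2N$ submeasures of mass at most $1/N$. Lemma~\ref{lem:cubature} with $K\asymp\log N$ is applied to each piece, analytic approximation handles the far pieces, and Hoeffding on the near-equator pieces gives $\eta^{p+n/2}$ in place of $\eta^{p+(n+1)/2}$. The only differences are cosmetic: the paper handles a general $\mu$ directly by restricting to compact supports inside the cells instead of first reducing to finite support, and it bounds the number of near-equator pieces simply by $N'\le 2N$ rather than using the variance form of Hoeffding.
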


\begin{proof}
	The proof is similar to that of Theorem~\ref{thm:ball}, so we describe only the differences.
	
	Again, let $n=d-1$, $\eta\asymp N^{-1/n}$, and $K \asymp_{n,p}\log N$. We first partition $\Sph^n = Q_1\cup\cdots\cup Q_N$ into compact connected sets of equal area with $\diam(Q_i)\leq C_n\eta$, where distinct cells may intersect on sets of $\sigma_n$-measure zero. Define the disjoint Borel sets
	\[
		A_1=Q_1,\qquad A_j=Q_j\setminus\bigcup_{i<j}Q_i,\quad j=2,\dots,N,
	\]
	so that $\Sph^n=A_1\cup\cdots\cup A_N$ is a partition and $A_j\subseteq Q_j$. Let $\mu_j = \mu|_{A_j}$, so that $\mu = \sum_{j=1}^N \mu_j$. Although $A_j$ need not be closed, $\supp(\mu_j)$ is a closed subset of the compact set $Q_j$ and is therefore compact.
	
	We decompose each $\mu_j$ as a sum of at most $\lceil N \mu_j(\Sph^n)\rceil$ nonzero positive submeasures, each of total mass at most $1/N$. In this decomposition, atoms of $\mu$ may be split into several submeasures supported at the same point. The total number of resulting submeasures is at most
	\[
		\sum_{j=1}^N\left\lceil N\mu_j(\Sph^n)\right\rceil
		\leq N\left(\sum_{j=1}^N\mu_j(\Sph^n) + 1\right) = 2N.
	\]
	From this point onward, we relabel all these submeasures as $\mu_1,\dots,\mu_{N'}$, with $N'\leq 2N$. Each relabeled $\mu_j$ is associated with a parent cell $Q_i$, has compact support contained in $Q_i$, and satisfies $\mu_j(\Sph^n)\leq 1/N$.
	
	For each $j$, independently apply Lemma~\ref{lem:cubature} to $\mu_j$ on the compact space $\supp(\mu_j)$, with $V=\cP_K(\Sph^n)$ restricted to this space, obtaining a random positive atomic measure $\nu_j$ supported on $\supp(\mu_j)$. Let $\nu=\sum_{j=1}^{N'}\nu_j$. Exactness on the constants gives $\nu(\Sph^n)=\sum_j\mu_j(\Sph^n)=1$, so $\nu$ is a probability measure. The approximation error on the away-from-equator submeasures is then again bounded by \eqref{eqn:far_error}.
	
	Fix $u\in\Sph^n$. Call a submeasure $\mu_j$ near the equator if its parent cell is not away from $u^\perp$. For every such submeasure, $|\ip{u}{y}|\leq C_n\eta$ on $\supp(\mu_j)$. Define
	\[
		X_{u,j}=\int_{\Sph^n}|\ip{u}{y}|^p d\nu_j(y)
		-\int_{\Sph^n}|\ip{u}{y}|^p d\mu_j(y).
	\]
	Then the $X_{u,j}$ are independent, have mean zero, and satisfy
	\[
		|X_{u,j}| \leq 2C_n^p \eta^p \cdot \mu_j(\Sph^n) \leq \frac{2C_n^p \eta^p}{N} =: L.
	\]
	There are at most $J\leq N'\leq 2N\asymp \eta^{-n}$ near-equator submeasures. Thus \eqref{eqn:near_error} now becomes
	\[
		\left| \sum_j X_{u,j} \right| \leq C_1 \eta^{\frac{n}{2}+p} \sqrt{\log(1/\zeta)}.
	\]
	The rest of the proof remains nearly identical, except that we now choose $\eta$ such that
	\[
		\eta^{\frac{n}{2}+p} \sqrt{\log(1/\eps)} \asymp_{n,p} \eps,
	\]
	which yields
	\[
		\eta \asymp_{n,p} \left( \frac{\eps}{\sqrt{\log(1/\eps)}} \right)^{\frac{2}{n+2p}}
	\]
	and thus
	\[
		|\supp(\nu)|\leq N'\dim\cP_K(\Sph^n)
		\lesssim_{n,p} \left(\frac{1}{\eta}\log \frac{1}{\eta} \right)^n 
		\lesssim_{n,p} \left(\frac{1}{\eps}\right)^{\frac{2n}{n+2p}}\left(\log\frac{1}{\eps}\right)^{B_{d,p}}.
	\]
\end{proof}

\end{document}